\newtheorem{theo}{\indent Theorem}[section]
\newtheorem{prop}[theo]{\indent Proposition}
\newtheorem{rem}[theo]{\indent Remark}
\newtheorem{lem}[theo]{\indent Lemma}
\newtheorem{cor}[theo]{\indent Corollary}
\newtheorem{ass}[theo]{\indent Assumption}
\def\D{{\mathbb D}}
\def \E{\mathbb E}
\def \P{\mathbb P}
\newcommand{\R}{\mathbb {R}}
\newcommand{\N}{\mathbb {N}}
\def\indiq{{\bf 1}}
\newcommand{\cco}{\llbracket}
\newcommand{\ccf}{\rrbracket}
\newcommand{\po}{\left(}
\newcommand{\pf}{\right)}
\newcommand{\co}{\left[}
\newcommand{\cf}{\right]} 
\newcommand{\jump}{\bar\lambda}
\newlength{\breite}
\title{Metastability for systems of interacting neurons}
\date{}
\author{Eva L\"ocherbach \and Pierre Monmarch\'e}
\address{E. L\"ocherbach: SAMM, Statistique, Analyse et Mod\'elisation Multidisciplinaire, Universit\'e Paris 1 Panth\'eon-Sorbonne, EA 4543 et FR FP2M 2036 CNRS, France.}
\email{eva.locherbach@univ-paris1.fr}
\address{Pierre Monmarch\'e: LJLL-UMR 7598, Sorbonne Universit\'e, France.}
\email{pierre.monmarche@sorbonne-universite.fr}
\begin{document}
\maketitle
\def\abstractname{Abstract}
\begin{abstract}
We study  a stochastic system of interacting neurons and its metastable properties. The system consists of $N$ neurons, each spiking randomly with rate depending on its membrane potential.
At its spiking time, the neuron potential is reset to $0$ and all other neurons
receive an additional amount $h/N$ of potential. In between successive spike times, each neuron looses potential at exponential speed. We study this system in the supercritical regime, that is, for sufficiently high values of the synaptic weight $h.$ Under very mild conditions on the behavior of the spiking rate function in the vicinity of $0$, is has been shown in Duarte and Ost  \cite{do} that the only invariant distribution of the finite system is the trivial measure $ \delta_{\bf 0}$ corresponding to extinction of the process. We strengthen these conditions to  prove that for large synaptic weights $h,$ the extinction time arrives at exponentially late times in $ N$, and discuss the stability of the equilibrium $\delta_{\bf 0}$ for the non-linear mean-field limit process depending on the parameters of the dynamics.  We then specify our study to the case of saturating spiking rates and show that, under suitable conditions on the parameters of the model, 1) the non-linear mean-field limit admits a unique and globally attracting non trivial equilibrium and 2) the rescaled exit  times for the mean spiking rate of a finite system from a neighbourhood of the non-linear equilibrium rate converge in law to an exponential distribution, as the system size diverges. In other words, the system exhibits a metastable behavior. 
\end{abstract}

{\it Key words} : Piecewise deterministic Markov processes, systems of interacting neurons, metastability, coupling. 
\\

{\it MSC 2000}  : 60 G 55, 60 J 25, 60 K 35

\section{Introduction}
In this paper we study the metastable behavior of a  microscopic stochastic model describing a large network of $N$ spiking neurons. Each neuron emits action potentials (spikes) at a rate $\lambda (u) $ depending on its membrane potential value $u.$ At the spiking time, the neuron's potential is reset to a resting value, which we choose equal to zero in this article. At the same time all its postsynaptic neurons receive an additional amount of potential $ h/ N ,$ where $ h > 0 $ is the synaptic weight and $ N$ the size of the system. Finally, in between successive jumps, each neuron's potential undergoes some leak effect and looses potential at exponential rate $ \alpha > 0 .$ Introduced in a discrete-time framework by Galves and L\"ocherbach in \cite{ae}, this model and its mean-field limits have been studied in De Masi, Galves, L\"ocherbach and Presutti \cite{aaee}, Fournier and L\"ocherbach \cite{fournier}, Robert and Touboul \cite{robert-touboul}, Cormier, Tanr\'e and Veltz \cite{cormier} and
Duarte and Ost \cite{do}.  \cite{fournier} and \cite{robert-touboul} propose also a discussion of the longtime behavior of the associated mean-field limits, proving in particular that for sufficiently high values of the synaptic interaction strength $h, $  the trivial measure is not attracting for the limit process. However, Duarte and Ost in \cite{do} show that, under very mild conditions on the spiking rate function $\lambda $,  the system goes extinct almost surely in finite time, that is, there exists a finite last spiking time after which the system does not present any spiking activity any more. For large systems, the system is expected to mimic the behavior of the limit system over long time intervals and to stay close to a temporary equilibrium state, the {\it metastable state}, before finally being kicked out of the metastable state and going rapidly to extinction. The present article formalizes this idea in mathematical terms. One of our main results is that for spiking rate functions that saturate and grow linearly before saturation, the mean spiking  rate of the system stays in the vicinity of the limit equilibrium for a time that, rescaled by its expected value, converges to an exponential distribution as $N$ goes to infinity. By the memoryless property of the exponential distribution, it means the exit time is unpredictable. This is what is commonly called {\it metastable behavior}. 

Metastability is a widely studied subject nowadays, and the existence of related phenomena is conjectured to play an important role in nature, in particular in brain functioning and the ability of systems of neurons to process information (\cite{deco}). It is also a major issue for stochastic algorithms (see \cite{lelievre} and references within). A metastable system stays in the neighborhood of a seemingly stable state, the metastable state, during a very long random time period, before leaving this region of the state space at some random exit time which is  exponentially large. Both in mathematical physics and in probability, a lot of papers are devoted to the study of such phenomena, in different processes, and following very different approaches. To cite just a few of them,  the potential theoretical approach focuses on the precise analysis of the exit probabilities and the exit times of the associated sets (see the recent monograph \cite{bovierbook} and the references cited therein, see also the recent \cite{bianchi} among others). This approach is particularly efficient for the study of reversible diffusions in an energy landscape, in the small noise regime (see  \cite{nils} for an overview). Other mathematical approaches aim at identifying metastable behavior by the means of martingale problems (see \cite{landim}) or using renormalization techniques (\cite{scoppola}). Finally, another important research direction makes us of large deviation techniques, starting probably with the work of Freidlin and Wentzell on random perturbations of dynamical systems, \cite{FW}, leading to a pathwise approach where probabilities of trajectories are evaluated, identifying the most likely paths and controlling the associated probabilities. This approach has inspired, among others, one of the first papers devoted to the study of the contact process, \cite{marzio}, and  the monograph \cite{Eulalia} is devoted to this topic. 

Our paper belongs to the class of papers where the path-wise approach is adopted. In particular, we rely heavily on coupling techniques and large deviation estimates, adapting the results of Brassesco, Olivieri and Vares in \cite{brassesco} to our frame. To quote \cite{sohier}, we show in particular the three main ingredients that the authors identify therein : The state space of the process can be divided into three subdomains $\mathcal K \subset \mathcal D $ and $ (\mathcal D)^c  $ where $\mathcal K$ is a {\it trap} (in our case, a vicinity of the limit equilibrium spiking rate or the set where the total spiking rate is lower bounded by a fixed threshold) such that we have 

$-$ {\it fast recurrence}, meaning that the process enters after some controlled time either in $\mathcal K$ or in ${\mathcal D}^c .$

$-$ {\it slow escape}, meaning that starting from configurations in $\mathcal K,$ the time the process takes to hit ${\mathcal D}^c  $ is much larger than the recurrence time.

$-$ {\it fast thermalization}, meaning that a process started in $ \mathcal K$ looses memory in a time  much shorter than the escape time.

For  systems of interacting and spiking neurons, close to our model, metastability has been first addressed by  Brochini and Abadi \cite{BA}. They study a  simplified and time discrete version of our model and do not prove the asymptotical exponentiality of the rescaled exit times. Two recent papers \cite{morgan} and \cite{morgan2} prove the asymptotical exponentiality of the rescaled extinction times within a model of interacting neurons which is reminiscent of the contact process in dimension one and thus only loosely related to our model. The main point of  these  papers is to make use of  the additivity of the process which implies in particular the existence of an associated dual process -- obviously, such techniques  are not applicable in our context. Finally, for the very widely studied contact process and its metastable properties, let us also cite \cite{Schonmann} or \cite{bruno} for one of the more recent contributions.

Let us now describe our results more in detail, together with the organisation of the paper. The model is introduced and the main results are stated in Section~\ref{Section-modeldef}. Section~\ref{sec:boundL} is devoted to the proof of   a lower bound on the extinction time. In a first step we show in Proposition \ref{prop:Z} that under minimal assumptions on the behavior of the spiking rate close to $ 0 $  it is possible to introduce a simple auxiliary Markov process $ Z^N $ for which the large $N-$dynamics, in particular Large Deviation results, are easily obtained, and which provides a lower bound on the total spiking rate of the system. For sufficiently large values of $\lambda' (0) h,$  the limit process associated to the large $N-$asymptotics of $ Z^N$ possesses a unique attracting equilibrium which is strictly positive. As a consequence, using Large Deviation techniques, the extinction time of the original process  is exponentially large in $ N $ (see Proposition \ref{prop:ldp1} and Theorem~\ref{thm:LDPLeta}). Our next section, Section \ref{sec:4}, is devoted to the study of the longtime behavior of the true, nonlinear in the sense of McKean, limit process associated to the original particle system. This process has already been studied in a slightly different form in \cite{fournier}, \cite{robert-touboul} and in \cite{cormier}. Theorem \ref{theo:41} and Proposition~\ref{prop:delta0instable1} state that, if $\lambda'(0)h>\alpha$, then the trivial invariant measure corresponding to extinction is unstable and the limit process always admits at least a second non-trivial and absolutely continuous invariant measure. The proofs of these results are given in this section. The main Theorem \ref{theo:gattractive} proven in this section shows then that for piecewise linear rate functions that saturate and for sufficiently large values of $\lambda'(0)h,  $ this second invariant measure is unique and globally attracting. We then continue the study of the metastable behavior of the finite-size system. In a first step, Section~\ref{sec:generalresult} collects general conditions that ensure that the rescaled exit times of a  Markov process are close in law to the exponential law, extending the results of Brassesco, Olivieri and Vares in \cite{brassesco}  for low-noise diffusion processes to our frame. In particular, Theorem \ref{thm:gene-exit} gives error bounds for the difference of the distribution function of the rescaled exit times and the one of the exponential law.  
Section \ref{sec:metastableneurons} then collects all preceding results and applies them to the system of interacting neurons we are interested in. In particular, our main result, Theorem \ref{theo:exitTimes}, is proven here. It shows that, provided $\lambda'(0)h$ is large enough, the exit times associated to some relevant domains, rescaled by their expectation, converge in law to an exponential law of parameter one. These relevant domains are on the one hand the set where where the total spiking rate is lower bounded by a fixed strictly positive level, and on the other hand the domain of the state space where the total spiking rate is within a neighbourhood of the non-linear equilibrium rate.  As a consequence, we have proven that the process exhibits a metastable behavior. 

In the case where $\lambda(u)= (ku)\wedge \lambda_*$ for some $k,\lambda_*>0$, the results depending on the parameters $a=\alpha/(kh)$ and $b=\lambda_*/(kh)$ are gathered in  Figure~\ref{fig-parametre}.

  \begin{figure}
\begin{center}
\includegraphics[scale=0.18]{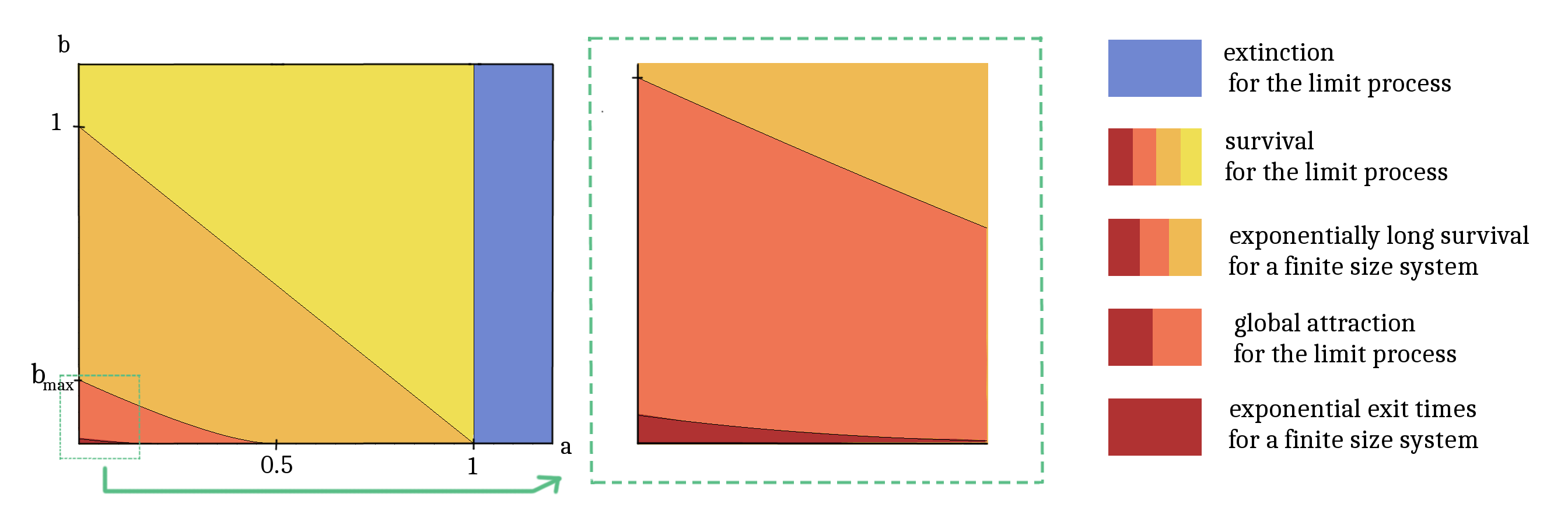}
\end{center}
\caption{Summary of the results when $\lambda(u)= (ku)\wedge \lambda_*$  with $a=\alpha/(kh)$ and $b=\lambda_*/(kh)$. If $a>1$, $\delta_{\bf 0}$ is the unique equilibrium of the limit process, globally attractive (Proposition~\ref{prop:extinction}) and if $a<1$ it is unstable and there exists at least a positive equilibrium (Theorem~\ref{theo:41} and Proposition~\ref{prop:delta0instable1}). If $a+b<1$, the last spike time for a finite system is exponentially large with the size of the system (Theorem~\ref{thm:LDPLeta}). Under the condition \eqref{condition_a_b_y0}, the positive equilibrium of the limit process is unique and globally attractive (Theorem~\ref{theo:gattractive}). Under the condition \eqref{condition_a_b_2}, the exit times of a finite size system from a neighborhood of the limit equilibrium converge to an exponential distribution (Theorem~\ref{theo:exitTimes}).}\label{fig-parametre}
\end{figure}

\subsection*{General notation}
Throughout this paper 
\begin{itemize}
\item $\overline A = A^c $ denotes the complementary of a set $A.$
\item
The supremum norm of any real-valued Borel-measurable function $f $ defined on $\R_+$ will be denoted by $\|f\|_{\infty}= \sup_{ x \in \R_+} |f(x)|.$
\item
For any two integers $ n < m , $ $ \llbracket n, m \rrbracket = \{ k \in \N : n \le k \le m \} .$ 
\item
For two probability measures $\nu_1$ and $\nu_2$ on $(\R_+,\mathcal{B} ( \R_+) )$, the Wasserstein distance of order $1$ between $\nu_1$ and $\nu_2$  is defined as
$$
W_1 (\nu_1,\nu_2)=\inf_{\pi\in\Pi(\nu_1,\nu_2)}\left( \int_{\R_+}\int_{\R_+} |x-y| \pi(dx,dy) \right) ,
$$
where $\pi$ varies over the set $\Pi(\nu_1,\nu_2)$ of all probability measures on the product space $\R_+\times \R_+ $ with marginals $\nu_1$ and $\nu_2$. 
\item
$\pi ( ds, dz) ,  \pi^i ( ds, dz) , i \geq 1, $ denotes an i.i.d. sequence of  Poisson random measures on $ \R_+ \times \R_+, $ having intensity $ d  s dz $ each.
\end{itemize}

\section{The model and main results}\label{Section-modeldef}
We consider for each $ N \geq 1$ the Markov process
$$U^N (t) = (U^N_1 (t), \ldots , U^N_N ( t) )   ,  \, t \geq 0 , $$
taking values in $\R_+^N, $ for some fixed integer $N \geq 1 ,$ solution of the stochastic differential equation 
\begin{multline}\label{eq:EDS_U^N}
 U^N_i  (t)  = U^N_i  (0) - \alpha \int_0^t U^N_i ( s)  ds + \frac{h}{N} \sum_{ j =1, j \neq i }^N \int_{[0, t ] \times \R_+}   \indiq_{\{ z \le \lambda ( U^N_j (s- ) ) \}} \pi^j ( ds , dz ) \\
  -\int_{[0, t ] \times \R_+}  U^N_i (s- )  \indiq_{\{ z \le \lambda ( U^N_i  (s- ) ) \}} \pi^i ( ds , dz ) \,,\qquad 1 \le i \le N.
\end{multline}
The associated generator is given for any smooth test function $ \varphi : \R_+^N \to \R $ by
\begin{equation}\label{eq:generator0}
A \varphi (x ) = \sum_{ i = 1 }^N  \lambda (x_i) \left[ \varphi (x + \Delta_i ( x)  ) - \varphi (x) \right]
- \alpha  \sum_i  \frac{\partial \varphi}{\partial x_i} (x)  x_i   ,
\end{equation}
where
\begin{equation}
(\Delta_i (x))_j =    \left\{
\begin{array}{ll}
\frac{h}N & j \neq i \\
- x_i & j = i 
\end{array}
\right\} , 
\end{equation}
and where $h  > 0 $ and $  \alpha > 0 $ are positive parameters. We assume that 
\begin{ass}\label{ass:1}
$\lambda : \R_+ \to \R_+ $ is bounded, increasing and Lipschitz. Moreover we have $\lambda ( 0) = 0 .$ 
\end{ass}
Under the above assumption, existence and uniqueness of a strong solution of~\eqref{eq:EDS_U^N} are a consequence e.g. from Theorem~IV.9.1 of \cite{ikeda_stochastic_1989}.

In what follows we shall define $ \lambda_* = \| \lambda\|_\infty < \infty  .$ Moreover, let us write $ T_0 = 0 < T_1 < T_2 < \ldots < T_n < \ldots $ for the successive jump times of the process. Since $ \lambda_* < \infty, $ they appear at most at the jump times of a rate $ N \lambda_*-$Poisson process. 
  
Under minimal regularity assumptions on the spiking rate, if we work at a fixed system size $N,$ this process will die out in the long run as shows the following
\begin{theo}\label{prop:dieout}[Theorem 2.3 of Duarte and Ost (2016) \cite{do}] 
If $ \lambda $ is differentiable in $ 0, $ then the system stops spiking almost surely, that is, 
$$ L^N := L := \sup \{ T_n : n \geq 1, T_n < \infty \} < \infty $$
almost surely.  As a consequence, the unique invariant measure of the process $U^N $ is given by $\delta_{\bf 0 }, $
where $\bf 0 \in \R^N $ denotes the all-zero vector in $\R^N .$ 
\end{theo}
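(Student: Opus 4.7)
The strategy I would use combines a Foster--Lyapunov bound on the total potential with a martingale argument based on the strong Markov property. Taking $V(x) = \sum_{i=1}^N x_i$ in the generator~\eqref{eq:generator0} gives
\[
A V(x) \;=\; \sum_{i=1}^N \lambda(x_i) \bigl( \tfrac{(N-1)h}{N} - x_i \bigr) - \alpha V(x) \;\le\; (N-1) h \lambda_* - \alpha V(x),
\]
so Dynkin's formula (valid since the total jump rate is bounded by $N \lambda_*$) together with Gr\"onwall's inequality yields the a priori estimate $\sup_{t \ge 0} \esp_x[V(U^N(t))] \le V(x) + (N-1) h \lambda_* / \alpha =: C_1$.

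Next I would lower bound the probability that no spike ever occurs under $\pr_x$. Between jumps each coordinate evolves deterministically as $x_i e^{-\alpha t}$, so by Poisson thinning
\[
\phi(x) \;:=\; \pr_x(\text{no spike ever}) \;=\; \exp\left( -\frac{1}{\alpha} \sum_{i=1}^N \int_0^{x_i} \frac{\lambda(u)}{u} \, du \right).
\]
The differentiability of $\lambda$ at $0$ makes $\lambda(u)/u$ bounded near $0$, while $\lambda \le \lambda_*$ controls the tail, so these integrals converge. Hence $\phi$ is continuous and strictly positive on $\R_+^N$, with $\phi(x) \to 1$ as $x \to \mathbf 0$ and $\phi \ge p_K > 0$ uniformly on each set $\{|x|_1 \le K\}$.

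Now set $\psi(x) := \pr_x(L < \infty)$. Since the total jump rate is bounded, only finitely many spikes occur in any finite interval, so $\{L < \infty\}$ coincides a.s. with the event that only finitely many spikes occur in $[t, \infty)$. The Markov property then gives $\pr(L < \infty \mid \F_t) = \psi(U^N(t))$ almost surely, so $(\psi(U^N(t)))_{t \ge 0}$ is a bounded martingale and converges a.s. to some $\psi_\infty \in [0,1]$. On $\{L < \infty\}$ the coordinates decay exponentially to $\mathbf 0$ after the last spike, so $\phi(U^N(t)) \to 1$ and hence $\psi_\infty = 1$. On $\{L = \infty\}$, the bound $\sup_t \esp_x[V(U^N(t))] \le C_1$ combined with Fatou's lemma yields $\liminf_{t \to \infty} V(U^N(t)) < \infty$ a.s., so along some random subsequence $V(U^N(t_k))$ is bounded, whence $\psi(U^N(t_k)) \ge \phi(U^N(t_k))$ stays bounded away from $0$, forcing $\psi_\infty > 0$ a.s.

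Combining, $\psi(x) = \esp_x[\psi_\infty] = \pr_x(L < \infty) + \esp_x[\psi_\infty \indiq_{\{L = \infty\}}]$, so the definition of $\psi$ forces the extra term to vanish, and the a.s.\ positivity of $\psi_\infty$ yields $\pr_x(L = \infty) = 0$. For uniqueness, $U^N(t) \to \mathbf 0$ almost surely (since $\{L < \infty\}$ now has full measure), so any invariant probability $\mu$ satisfies $\mu = \mu P_t \to \delta_{\mathbf 0}$ as $t \to \infty$, forcing $\mu = \delta_{\mathbf 0}$. The main technical obstacle will be making the $\psi_\infty > 0$ argument rigorous on $\{L = \infty\}$---in particular, handling the random subsequence and the interchange of limits---for which the differentiability of $\lambda$ at $0$ (ensuring $\lambda(u)/u$ has a finite limit at $0$) is precisely what one needs.
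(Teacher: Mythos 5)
Your argument is correct, and it isolates exactly the right two ingredients: the Foster--Lyapunov bound $AV \le (N-1)h\lambda_* - \alpha V$ keeping $\esp_x[V(U^N(t))]$ bounded uniformly in $t$, and the explicit no-spike probability
\[
\phi(x)=\exp\Bigl(-\tfrac1\alpha\textstyle\sum_i\int_0^{x_i}\tfrac{\lambda(u)}{u}\,du\Bigr),
\]
strictly positive precisely because differentiability of $\lambda$ at $0$ (with $\lambda(0)=0$) and boundedness of $\lambda$ make the integrand bounded on $[0,\infty)$, hence $\phi\ge e^{-M|x|_1/\alpha}$ on sets of bounded $\ell^1$-norm. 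Packaging these via the bounded martingale $\psi(U^N(t))=\pr(L<\infty\mid\F_t)$, together with Fatou on $V$ to force a subsequence along which $\phi(U^N(t_k))$ is bounded away from zero, cleanly yields $\psi_\infty>0$ a.s.\ and hence $\pr_x(L=\infty)=0$; the identification $\delta_{\bf 0}$ as the unique invariant law then follows from a.s.\ convergence of $U^N(t)$ to $\bf 0$ and dominated convergence applied to $\mu P_t(f)=\mu(f)$. Note that the paper does not reprove this statement --- it is quoted as Theorem~2.3 of Duarte--Ost --- so I cannot line your proof up against a proof in the present paper; the ingredients (a dissipative Lyapunov estimate, a positive-probability extinction from any compact set tied to differentiability of $\lambda$ at zero, and a recurrence/Borel--Cantelli-type combination of the two) are the standard ones for this result, and your martingale-convergence framing is a particularly compact way to carry out the combination step. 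The only place to be slightly more pedantic if you wrote this up: when invoking the Markov property for $\pr(L<\infty\mid\F_t)=\psi(U^N(t))$, you should say explicitly that $\{L<\infty\}$ agrees, up to a null set, with the event that only finitely many spikes occur after time $t$ (which holds because the total jump rate is bounded by $N\lambda_*$, so only finitely many jumps occur in $[0,t]$ a.s.); you do gesture at this, and it is fine.
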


This paper is devoted to the study of the large $N$ asymptotics of this last spiking time $ L^N$ and related exit times. 

In addition to Assumption \ref{ass:1} we suppose that 
\begin{ass}\label{ass:2}
$\lambda $ is Lipschitz continuous with  
 $ \lambda ' (u) u \le \frac{r}{\alpha } \lambda ( u) $ for all $u\ge 0$ for some $r>0$. Moreover, there exists $ u_* > 0 $ such that $ \lambda ' (u) \geq k $ for all $ u \le u_*  .$ 
Finally, we have that   
$ k h  > \lambda_* .$
\end{ass}

\begin{rem}
If $\lambda$ is not differentiable, the conditions on $\lambda'$ are to be understood as
\[\lambda(x) - \lambda(y) \geq k(x-y)\,, \forall x,y\, :\, y\le x\le u_*\,,\qquad \lambda(x) - \lambda(y) \le \int_y^x \frac{r\lambda(z)}{\alpha z}dz\,, \forall  x,y\, :\, 0<y\le x\,.\]
This holds for instance if $\lambda$ is concave piecewise $\mathcal C^1$ with the conditions satisfied on each interval where $\lambda'$ is defined.
\end{rem}
 
Introduce 
\begin{equation}\label{eq:lambdaN}
\Lambda^N (t) := \sum_{i=1}^N \lambda (U_i^N(t) )  ,
\end{equation}
the total spiking rate of the system. The next result shows that the last spiking time $L^N$ is exponentially large in $N,$ provided $kh$ is large enough and $ \Lambda^N (0) $ not degenerate.

\begin{theo}\label{thm:LDPLeta}
Grant Assumptions~\ref{ass:1} and \ref{ass:2} and suppose the large deviation principle holds in $ \R$ for $ \Lambda^N ( 0) /N $ with good rate function.  Assume moreover that $kh>\lambda_*+r$ and that the law of the initial condition $U^N(0)$ is such that for all $\varepsilon>0$, there exist $x_0>0$ such that $\mathbb P(\Lambda^N(0) \geqslant N x_0)\geqslant 1-\varepsilon$ for all $N$ large enough.   Then for all $\delta > 0 ,$ 
\begin{equation}\label{eq:bigdev}
   \lim_{N\rightarrow +\infty} \P \po L^N \geqslant  e^{ (W_0 - \delta) N}  \pf\  =\ 1\,,
\end{equation}
where
\[W_0 \ = \  \frac{\lambda(u_*)}{kh} \po  \frac{kh-\lambda_*}{r}  - 1  - \ln\po \frac{kh-\lambda_*}{r} \pf -\frac12 \ln^2\po \frac{kh-\lambda_*}{r} \pf \pf\ > \ 0\, .\]
\end{theo}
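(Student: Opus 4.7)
The plan is to leverage the auxiliary process $Z^N$ of Proposition \ref{prop:Z}, which by construction satisfies the pathwise lower bound $Z^N(t) \le \Lambda^N(t)$ for every $t \ge 0$. Consequently, as long as $Z^N$ has not yet hit zero, the original system still carries some spiking mass, and therefore the hitting time $\tau_0^N := \inf\{t \ge 0 : Z^N(t) = 0\}$ is a lower bound for $L^N$. The bulk of the proof is to show $\tau_0^N \ge e^{(W_0 - \delta)N}$ with probability tending to one.

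The first step handles the initialisation. Since $\Lambda^N(0)/N \ge x_0$ holds with probability at least $1-\varepsilon$ for $N$ large, and $Z^N$ is built from the same initial data, $Z^N(0)/N$ is likewise bounded below. Under the hypothesis $kh > \lambda_* + r$, the limit ODE for $Z^N/N$ (established in Proposition \ref{prop:Z}) admits a unique positive, globally attractive equilibrium $z^*$. Combining the fluid-limit convergence of $Z^N/N$ to this ODE with the exponential deviation estimates available from Proposition \ref{prop:ldp1}, a bounded time $T_0$ suffices for $Z^N/N$ to enter a prescribed small neighborhood $B$ of $z^*$ with probability going to one.

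The second and main step is a Freidlin--Wentzell-type lower bound on the exit time of $Z^N/N$ from $B$, or equivalently on the hitting time of zero. Using the LDP for the trajectories of $Z^N/N$ with good rate function $I_T$ supplied by Proposition \ref{prop:ldp1}, one obtains
\[
\liminf_{N\to\infty} \frac{1}{N}\log \tau_0^N \;\ge\; V \quad \text{in probability}, \qquad V \;=\; \inf_{T>0}\inf_{\varphi(0)=z^*,\,\varphi(T)=0} I_T(\varphi),
\]
by the standard argument that trajectories reaching $0$ cost at least $V - o(1)$, while the process is renewed in $B$ at each failed attempt with $O(1)$ cost. The remaining work is to compute $V$ explicitly and verify $V = W_0$. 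Because $Z^N$ is a one-dimensional jump process with elementary birth and decay mechanisms, the variational problem reduces to a one-dimensional quadrature along a monotone optimal escape path $z^* \to 0$; the explicit form of $W_0$, featuring $\ln((kh-\lambda_*)/r)$ and $\ln^2((kh-\lambda_*)/r)$, is consistent with a Cram\'er--Poisson type integrand (logarithmic tilt of the jump rate) integrated against an affine drift.

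The principal technical obstacle I anticipate is the variational computation itself: one must identify the optimal tilt, rule out cheaper non-monotone escape scenarios, and deal with the degeneracy of the rate function at the boundary $\{0\}$ where the limit drift changes behavior. The condition $kh > \lambda_* + r$ should be precisely what guarantees that $W_0 > 0$, which is why it appears as a hypothesis. Once $V = W_0$ is established, combining it with the initialisation step and the pathwise domination $\tau_0^N \le L^N$ yields \eqref{eq:bigdev}.
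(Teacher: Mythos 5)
Your overall framework — dominate $\Lambda^N$ from below by the auxiliary process $Z^N$, invoke the pathwise LDP, and reduce to a one-dimensional Freidlin--Wentzell quasipotential computation — is the same as the paper's, and your closing remarks about the variational quadrature and the role of $kh>\lambda_*+r$ are on target. However, the crucial link between $Z^N$ and $L^N$ is wrong as stated, and this gap is not cosmetic.

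First, the hitting time $\tau_0^N=\inf\{t\ge 0: Z^N(t)=0\}$ is almost surely infinite: between jumps the process follows $\dot z=-rz$ (exponential decay, never reaching $0$), and its jumps are nonnegative, so once $Z^N>0$ it stays so forever. The claim $\tau_0^N\le L^N$ is therefore vacuous and, combined with Theorem~\ref{prop:dieout} ($L^N<\infty$ a.s.), outright contradictory. Second, and more importantly, even after replacing $\tau_0^N$ by the level-crossing time $L^N_\eta=\inf\{t: Z^N(t)\le\eta\}$, the implication ``$\Lambda^N(t)>0$ so the system still carries spiking mass, hence $L^N\ge t$'' is false: since $\Lambda^N$ decays exponentially between spikes, its time-integral is finite, so with positive probability the system never spikes again even though $\Lambda^N$ is strictly positive at all times. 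The correct argument (Remark~\ref{rem:expoL} in the paper) uses the coupling at the level of jump \emph{events}: every jump of $Z^N$ is, by construction, a spike of the original system, hence the last jump time $L'$ of $Z^N$ satisfies $L'\le L^N$; one then observes that if $Z^N$ has not reached level $\eta$ by time $t+S$ with $S=(\ln z_\infty-\ln\eta)/r$, it must have jumped after time $t$, giving $L^N_\eta - S\le L'\le L^N$. Without this observation the chain from the LDP for $Z^N$ to a lower bound on $L^N$ does not close. (A minor but recurring point: the bound of Proposition~\ref{prop:Z} is $Z^N(t)\le\Lambda^N(t)/N$, and the LDP of Theorem~\ref{theo:ldp} is for $Z^N$ itself, not $Z^N/N$; your notation $Z^N/N$ suggests a scaling confusion that should be cleaned up.)
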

This is proven in Section~\ref{sec:LDP2}.

\begin{rem}
The other quantities of Assumption~\ref{ass:2} being fixed, note that $W_0\rightarrow +\infty$ as $r\rightarrow 0$ and that $W_0\rightarrow \lambda(u_*)/r$ as $kh\rightarrow +\infty$.
\end{rem}

The further study of $L^N $ and the longtime behavior of $ U^N$ is related to the one of the associated non-linear limit process. More precisely, 
as $N\rightarrow \infty$, the trajectory of a neuron is expected to converge to a process $ \bar U $ solving 
\begin{equation}\label{eq:limitU}
 d \bar U(t) = - \alpha \bar U (t) dt + h \E ( \lambda (\bar U(t)) dt -\bar U (t- )  \int_{\R_+} \indiq_{\{ z \le \lambda (\bar U (t- ) ) \}} \pi ( dt , dz ) ,
\end{equation} 
where $ \pi (dt, dz) $ is a Poisson random measure on $ \R_+ \times \R_+ $ having intensity $ dt dz.$ Under our assumptions, equation \eqref{eq:limitU} is known to be well-posed and to possess a unique strong solution (see Theorem 4 of \cite{fournier} or Theorem 5 of \cite{cormier}).
The convergence of $ U^N $ to $ \bar U$  will be detailed   in Section~\ref{subsec:chaos} (see Proposition~\ref{prop:propchaos}). For now, let us focus on the stability and long-time behaviour of this limit process. We first investigate the invariant states of the limit equation \eqref{eq:limitU}, under general conditions on the jump rate. 
\begin{theo}\label{theo:41}
Assume that $\lambda$ is non-negative, bounded by $\lambda_*$, Lipschitz with $\lambda(0)=0$, and that there exist $u_*>0$ and $k>0$ such that $\lambda(u)\geqslant ku$ for all $u\in[0,u_*]$.  Then, if $kh>\alpha,$ 
 the nonlinear equation \eqref{eq:limitU} has at least two invariant probability measures
supported in $\R_+$. The first is $ \delta_{\bf 0} $. The others are of the form $g(dx)=g(x)dx$,
with $g:[0,\infty) \mapsto [0,\infty)$ given by
\begin{equation}\label{eq:invariantg}
 g (x) = \frac{p_*}{h p_*  - \alpha x } 
\exp \Big( - \int_0^x \frac{\lambda (y) }{h p_* - \alpha  y  } dy \Big) 
\indiq_{\{ 0 \le x < h  p_*/ \alpha  \} }
\end{equation}
for some
\[ p_* \in \left[ \frac{\alpha}h \po u_*\wedge \frac{kh-\alpha}{\|\lambda\|_{Lip}}\pf  ,\lambda_*\right]\]
such that $\int_0^\infty g(dx)= 1$ and $\int_0^\infty \lambda (x) g(dx) = p_*  $. 
\end{theo}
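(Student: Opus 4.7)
The plan is to reduce \eqref{eq:limitU} to a scalar self-consistency problem. If $\bar U(0)\sim\mu$ is an invariant probability, then $t\mapsto p(t):=\E[\lambda(\bar U(t))]$ is constant, equal to some $p_*\ge 0$, and $\mu$ must then be invariant for the time-homogeneous piecewise-deterministic Markov process with generator $A_{p_*}\varphi(x)=(hp_*-\alpha x)\varphi'(x)+\lambda(x)(\varphi(0)-\varphi(x))$. The case $p_*=0$ forces $\mu=\delta_{\bf 0}$ directly, since the drift is contracting and $\lambda(0)=0$. For $p_*>0$, the deterministic flow confines trajectories to $[0,hp_*/\alpha]$, and I look for an absolutely continuous invariant measure $\mu=g(x)dx$ on this interval.

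Testing $\int A_{p_*}\varphi\,g\,dx=0$ against smooth $\varphi$ and integrating by parts gives the first-order linear ODE $\po(hp_*-\alpha x)g(x)\pf'=-\lambda(x)g(x)$ on $(0,hp_*/\alpha)$, whose general solution is $g(x)=\kappa(hp_*-\alpha x)^{-1}\exp\po-\int_0^x\lambda(y)/(hp_*-\alpha y)\,dy\pf$. Integrability at the upper endpoint follows from $\lambda(hp_*/\alpha)>0$, which forces $(hp_*-\alpha x)g(x)\to 0$ there. Integrating the ODE across $[0,hp_*/\alpha]$ produces the key identity $\int_0^{hp_*/\alpha}\lambda(x)g(x)dx=hp_*\,g(0)$, so the normalisation $g(0)=1/h$ (i.e.\ $\kappa=p_*$) that is adopted in~\eqref{eq:invariantg} makes the self-consistency condition $\int\lambda\,g=p_*$ automatic. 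The only remaining condition is that $g_{p_*}$ be a probability density, i.e.\ $F(p_*):=\int_0^{hp_*/\alpha}g_{p_*}(x)dx=1$.

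Existence of $p_*\in(0,\lambda_*]$ with $F(p_*)=1$ is proved by the intermediate value theorem. Continuity of $F$ on $(0,+\infty)$ follows from the formula after the substitution $x=(hp_*/\alpha)s$ and dominated convergence. Taking $p_*=\lambda_*$, the normalised probability $\mu_{\lambda_*}=g_{\lambda_*}/F(\lambda_*)$ satisfies $\int\lambda\,d\mu_{\lambda_*}=\lambda_*/F(\lambda_*)\le\lambda_*$, whence $F(\lambda_*)\ge 1$. For small $p_*$, the support $[0,hp_*/\alpha]$ lies in $[0,u_*]$, so the lower bound $\lambda(y)\ge ky$ can be inserted into the exponential in~\eqref{eq:invariantg}; the rescaling $s=\alpha x/(hp_*)$ then reduces $F(p_*)$ to a Beta-type integral whose $p_*\to 0^+$ limit is bounded above by $\alpha/(kh)<1$ thanks to $kh>\alpha$. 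Combining these two opposite inequalities with continuity gives the desired $p_*$.

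Finally, the a posteriori lower bound on $p_*$ comes from the first-moment identity $\alpha\E[X]+\E[X\lambda(X)]=hp_*$, obtained with $X\sim g_{p_*}$ by testing $\int A_{p_*}\varphi\,g_{p_*}\,dx=0$ against $\varphi(x)=x$. Either $p_*\ge\alpha u_*/h$, which is one branch of the bound, or else $X\in[0,u_*]$ almost surely, and inserting $kX\le\lambda(X)\le\|\lambda\|_{Lip}X$ and $X\le hp_*/\alpha$ into the moment identity and rearranging produces the second branch $p_*\ge\alpha(kh-\alpha)/(h\|\lambda\|_{Lip})$. The main technical obstacle of the whole argument is the small-$p_*$ asymptotics of $F$: extracting the precise cancellation between the endpoint singularity of $1/(hp_*-\alpha x)$ and the polynomial decay of the exponential, so as to obtain the explicit limit $\alpha/(kh)$ via the Beta-type computation above.
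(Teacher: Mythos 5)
Your proposal is correct, and it reaches the conclusion by a route that is dual to the paper's rather than identical to it. The paper parameterises by the drift strength $a$ (morally $a=hp_*$), fixes $g_a$ to be a probability density so that $\int g_a=1$ holds by construction while $p_a:=\int\lambda\,g_a$ is a free quantity, and seeks $a$ with $hp_a=a$ by the intermediate value theorem; the lower endpoint ($hp_a>a$ for small $a$) is obtained from the first-moment identity $\int(a-\alpha x-x\lambda(x))g_a(dx)=0$ combined with $\lambda(x)\geqslant kx$ on the support, while the upper endpoint is the trivial bound $p_a<\lambda_*$. You instead parameterise directly by $p_*$, fix the constant $\kappa=p_*$ so that $\int\lambda\,g_{p_*}=p_*$ holds by construction while $F(p_*):=\int g_{p_*}$ is free, and seek $p_*$ with $F(p_*)=1$. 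The relation is $F(p_*)=p_*/p_a$ with $a=hp_*$, so the two IVT problems are equivalent; your endpoint inequality $F(\lambda_*)\geqslant 1$ is exactly the paper's $p_a<\lambda_*$ in disguise, but your lower endpoint ($\limsup_{p_*\to 0^+}F(p_*)\leqslant\alpha/(kh)<1$) is established by a genuinely different, more explicit computation: inserting $\lambda(y)\geqslant ky$ into the exponential, rescaling $s=\alpha x/(hp_*)$, and bounding $\int_0^1(1-s)^{c-1}e^{cs}\,ds\leqslant e^c/c$ with $c=khp_*/\alpha^2$. The paper uses the moment identity at this endpoint, whereas you use it only afterwards, to derive the a posteriori lower bound $p_*\geqslant\frac{\alpha}{h}\left(u_*\wedge\frac{kh-\alpha}{\|\lambda\|_{Lip}}\right)$ for the solution found by IVT; both routes to that bound are correct. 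Your explicit Beta-type computation buys a cleaner picture of why $kh>\alpha$ is the right threshold, while the paper's moment-identity argument is somewhat shorter and generalises more readily. One small shared omission: both proofs tacitly require $\lambda(hp_*/\alpha)>0$ (so the density is integrable at the right endpoint) for the relevant range of $p_*$; the paper outsources this to a cited result, and you should be explicit that either $hp_*/\alpha\leqslant u_*$ or a positivity hypothesis on $\lambda$ beyond $u_*$ is being used.
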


Under the same condition $hk > \alpha$, we can prove that $\delta_{\bf 0}$ is unstable~:

\begin{prop}\label{prop:delta0instable1}
Grant Assumption~\ref{ass:1} and assume that there exist $u_*>0$ and $k>0$ such that $\lambda(u)\geqslant ku$ for all $u\in[0,u_*]$.  Then, if $kh>\alpha$, there exists $c>0$ such that the following holds. For all probability distributions $\mu_0\neq \delta_{\bf 0}$ on $\R_+$, denoting by $\mu_t$ the law at time $t$ of the process \eqref{eq:limitU} with initial distribution  $\mu_0$, there exists $T>0$ such that $z_t := \int_0^\infty \lambda d\mu_t \geqslant c$ for all $t\geqslant T$.
\end{prop}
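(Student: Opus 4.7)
The plan is to exploit the fact that the linearization of the mean-field dynamics~\eqref{eq:limitU} around $\delta_{\bf 0}$ has positive exponent $hk-\alpha>0$, so that the moment $v_t:=\E \bar U(t)$ must grow exponentially in a neighbourhood of~$\delta_{\bf 0}$. Applying the generator to $u\mapsto u$ gives
\[ v_t'\ =\ -\alpha v_t + h z_t - \E\bigl[\lambda(\bar U(t))\,\bar U(t)\bigr]\,, \]
and from~\eqref{eq:limitU} together with $z_s\leq \lambda_*$ one derives the pathwise bound $\bar U(t)\leq \bar U(0) e^{-\alpha t} + h\lambda_*/\alpha$. More sharply, if $z_s\leq \eta$ on $[t_0,t]$, then $\bar U(s)\leq 2h\eta/\alpha$ almost surely for $s\geq t_0+T_1(\mu_0,\eta)$.

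Next, I would choose $\eta>0$ small enough that $2h\eta/\alpha\leq u_*$ and $2Lh\eta/\alpha \leq (hk-\alpha)/2$, with $L:=\|\lambda\|_{\mathrm{Lip}}$, and suppose for contradiction that $z_s\leq \eta$ throughout $[t_0,t_0+\tau]$. Past the transient, $\bar U(s)\leq u_*$ a.s., so $\lambda(u)\geq ku$ gives $z_s\geq k v_s$ and $\lambda(\bar U(s))\bar U(s)\leq L \bar U(s)^2 \leq (2Lh\eta/\alpha)\bar U(s)$ gives a quadratic remainder. Substituting,
\[ v_s'\ \geq\ v_s\Bigl(hk - \alpha - \frac{2Lh\eta}{\alpha}\Bigr)\ \geq\ \tfrac12(hk-\alpha)\,v_s\,, \]
so $v_s$ grows exponentially, contradicting $v_s\leq 2h\eta/\alpha$ for $\tau$ large, \emph{provided} $v_{t_0+T_1}>0$. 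To secure this positivity, I would apply the generator to $\mathbf 1_{(0,\infty)}$ to get the differential inequality $(d/dt)\mu_t((0,\infty))\geq -\lambda_* \mu_t((0,\infty))$, so that $\mu_t((0,\infty))\geq \mu_0((0,\infty))e^{-\lambda_* t}>0$ whenever $\mu_0\neq \delta_{\bf 0}$, whence $v_t>0$ for all finite $t$.

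The preceding argument yields that $z_t$ exceeds $\eta$ at some point in every sufficiently long interval. To upgrade this into the asymptotic lower bound claimed by the proposition, I would establish an invariant-set argument: show that for some small $v^\star>0$ depending only on the parameters, the set $\{v\geq v^\star\}$ is positively invariant for the flow after a deterministic time (to be verified by checking $v_t'\geq 0$ on its boundary via the same linearization bounds), and then translate to $z$ via $z_t \geq k\E(\bar U(t)\wedge u_*)\geq \alpha k u_* v^\star/(2h\lambda_*):=c$, using the global a priori bound $\bar U(t)\leq 2h\lambda_*/\alpha$ and the comparison $(u\wedge u_*)/u\geq \alpha u_*/(2h\lambda_*)$ on $(0,2h\lambda_*/\alpha]$. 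The principal obstacle lies in this invariance step: the linearization estimates apply only when $\bar U(t)$ has been uniformly small in the past, so one has to track excursions of $\bar U(t)$ above~$u_*$ simultaneously, combining the local linear growth of~$\lambda$ near~$0$ with its boundedness at infinity to absorb these excursions into an error that is still dominated by the positive growth rate $hk-\alpha$.
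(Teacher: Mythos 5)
Your proposal correctly identifies the right heuristic — linearize the mean-field dynamics near $\delta_{\mathbf 0}$ and exploit the positive exponent $hk-\alpha>0$ for the moment $v_t=\E\bar U(t)$ — and this is indeed the philosophy behind the paper's proof. However, there is a genuine gap that you yourself label the ``principal obstacle,'' and the proposal does not resolve it.

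The almost-sure bound you invoke — that $z_s\leq\eta$ on $[t_0,t]$ forces $\bar U(s)\leq 2h\eta/\alpha$ a.s. for $s\geq t_0+T_1(\mu_0,\eta)$, and likewise the global bound $\bar U(t)\leq 2h\lambda_*/\alpha$ a.s. — is false when $\mu_0$ has unbounded support. For any deterministic $T_1$, there is a positive-probability event on which $\bar U(t_0)$ is very large and no jump occurs on $[t_0,t_0+T_1]$, so that $\bar U(t_0+T_1)\geq e^{-\alpha T_1}\bar U(t_0) > 2h\eta/\alpha$. Consequently the inequalities $z_s\geq kv_s$ (which requires $\bar U(s)\leq u_*$ a.s.) and $\E[\lambda(\bar U(s))\bar U(s)]\leq (2Lh\eta/\alpha)v_s$ (which requires $\bar U(s)\leq 2h\eta/\alpha$ a.s.) are not available, and the differential inequality $v_s'\geq \tfrac12(hk-\alpha)v_s$ does not follow. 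The same issue undermines the invariance step, where you need $\E[\lambda(\bar U)\bar U]$ controlled by $v_t$: without a pointwise cap on $\bar U$, the quadratic term involves $\E[\bar U^2]$, which cannot be bounded in terms of $v_t$ alone.

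The paper's proof resolves exactly this difficulty by not working with $v_t=\E\bar U(t)$ directly, but with the truncated quantity $\E\big[\bar U(t)\indiq_{\{\bar U(s)\leq \gamma h/\alpha\}}\big]$ for $s\leq t$, and by establishing (its Step 1) a rough multiplicative lower bound $z_t\geq c_t z_0$ that is used both to seed the truncated moment (Step 3: with probability bounded below, $\bar U(1)\in[0,\gamma h/\alpha]$ with $\E[\bar U(1)\indiq_{\{\bar U(1)\leq\gamma h/\alpha\}}]\geq\tilde c z_0$) and to absorb excursions after the level $\gamma$ has been reached (Step 4, together with the Step-1 bound over the bounded excursion length $t_3$). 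This truncation and the rough pathwise lower bound are precisely the machinery that your sketch flags as missing. As written, the proposal does not constitute a proof: the step you concede is an ``obstacle'' is in fact the entire technical content of the argument.
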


On the contrary, other parameters lead to the extinction of the system:

\begin{prop}\label{prop:extinction}
Suppose that $\lambda(u) \le k u$ for all $u\ge 0$ for some $k>0$ such that $kh<\alpha$. Then $ \delta_{\bf 0}$ is the only equilibrium of \eqref{eq:limitU}, and it is globally attractive: If $(\eta_t)_{t\ge0}$ is the law of a solution $\bar U$ of \eqref{eq:limitU} with initial condition $\eta_0$, then
\[W_1(\eta_t, \delta_{\bf 0})\ \le\ e^{-(\alpha -kh)t}W_1(\eta_0, \delta_{\bf 0})\,.\]
\end{prop}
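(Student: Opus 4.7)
The plan is to reduce everything to a simple scalar differential inequality for the first moment $m(t) := \E[\bar U(t)]$. Since $\delta_{\bf 0}$ is a Dirac mass and $\bar U(t)\geqslant 0$, the optimal coupling is trivial and gives the identity
\[
W_1(\eta_t,\delta_{\bf 0}) \;=\; \int_{\R_+} x\, \eta_t(dx) \;=\; m(t).
\]
So it suffices to prove the exponential decay of $m(t)$.

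To get an equation for $m(t)$, I would apply the generator of \eqref{eq:limitU} to the test function $\varphi(u)=u$ (equivalently, take expectations in the SDE using that the compensated jump martingale has zero expectation; the integrability of $\bar U(t)$ follows from the bound $\bar U(t)\le \bar U(0)+h\lambda_* t$ obtained by dropping the negative terms in the SDE). This yields
\[
\frac{d}{dt} m(t) \;=\; -\alpha\, m(t) \;+\; h\, \E\bigl[\lambda(\bar U(t))\bigr] \;-\; \E\bigl[\bar U(t)\lambda(\bar U(t))\bigr],
\]
where the last term is the compensator of the reset-to-zero jump term $-\bar U(t-)\indiq_{\{z\le\lambda(\bar U(t-))\}}\pi(dt,dz)$. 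Using the hypothesis $\lambda(u)\le ku$ for the middle term and the nonnegativity of $u\lambda(u)$ to discard the last term, I obtain
\[
\frac{d}{dt} m(t) \;\leqslant\; -\alpha\, m(t) + kh\, m(t) \;=\; -(\alpha-kh)\, m(t).
\]
Grönwall's lemma then gives $m(t)\le e^{-(\alpha-kh)t}m(0)$, which, combined with the $W_1$ identity above, is exactly the stated bound.

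For the uniqueness of the equilibrium, suppose $\eta$ is an invariant probability measure with finite first moment $m$. Since running the process from $\eta$ gives $\eta_t\equiv\eta$, we have $m(t)\equiv m$ for all $t$, while the inequality above forces $m\le e^{-(\alpha-kh)t}m$ for every $t\ge 0$; since $\alpha-kh>0$, this yields $m=0$, so $\eta$ is supported at $\{0\}$ and hence $\eta=\delta_{\bf 0}$. (If the first moment of $\eta$ is infinite, one can apply the same argument to $\eta_t$ with initial distribution $\eta$ truncated to $[0,R]$ and let $R\to\infty$, or directly observe that invariance propagates a finite initial moment by the linear growth bound on $\bar U$.) Global attractiveness for an arbitrary initial law with possibly infinite mean can be handled by approximating $\eta_0$ by its restrictions to $[0,R]$ and using the contraction above to pass to the limit.

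No step here is a serious obstacle; the only mild technical care is in justifying the expectation computation (boundedness of $\lambda$ and the linear a priori bound on $\bar U(t)$ make this routine) and in handling initial conditions with infinite first moment, where $W_1(\eta_0,\delta_{\bf 0})=+\infty$ and the inequality is vacuous.
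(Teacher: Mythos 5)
Your proof is correct and is essentially identical to the paper's: both reduce $W_1(\eta_t,\delta_{\bf 0})$ to the first moment $\E[\bar U(t)]$, compute its time derivative via the generator (obtaining $-\alpha m + h\E[\lambda(\bar U)] - \E[\bar U\lambda(\bar U)]$), bound it using $\lambda(u)\le ku$ and the nonnegativity of $u\lambda(u)$, and conclude by Grönwall. The extra remarks on integrability, infinite initial moment, and uniqueness are routine elaborations of what the paper leaves implicit.
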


Theorem~\ref{theo:41} and Propositions~\ref{prop:delta0instable1}  and \ref{prop:extinction} are proven in Section~\ref{sec:instability-nl}.

\begin{rem}
If $h\lambda'(0)> \alpha$ then Theorem~\ref{theo:41} and Proposition~\ref{prop:delta0instable1} holds with a sufficiently small choice of $u_*$. On the contrary, if $\lambda$ is concave with $h\lambda'(0)<\alpha$ then Proposition~\ref{prop:extinction} holds. If we assume that $\lambda(u) \le k u$ with $k h <\alpha$ only for $u$ smaller than some threshold $u_*$, then the proof of Proposition~\ref{prop:extinction} still applies to any initial condition with support in $[0,u_*]$, so that $\delta_{\bf 0}$ is at least locally stable.
\end{rem}

In the large $kh$ regime, we are able to show that the second invariant measure $g $ obtained in Theorem \ref{theo:41} is unique and globally attracting. For the simplicity of computations, in the sequel, we consider an explicit piecewise linear rate, although  we expect the proofs to work more generally, at least in the case where $\lambda$ is concave,  reaches the value $\lambda_*$ and saturates (without necessarily being linear beforehand).
\begin{ass}\label{ass:lambda}
The jump rate is $ \lambda ( u) = (ku) \wedge \lambda_*$ for all $u\ge 0,$ for some $k,\lambda_*>0$. 
\end{ass}
Under Assumption~\ref{ass:lambda}, set $a:=\alpha/kh$ and $b:=\lambda_*/kh$. Remark that Assumption \ref{ass:lambda} with $kh>\lambda_*$ (i.e. $b<1$) implies Assumptions \ref{ass:1} and \ref{ass:2} with the same $k,\lambda_*, $ with $u_*=\lambda_*/k$ and $r=\alpha.$ Similarly, the condition $kh>\lambda_*+r$ enforced in Theorem~\ref{thm:LDPLeta} reads $a+b<1$. We now strengthen  this quantitative  assumption and suppose that the conditions
\begin{eqnarray}\label{condition_a_b_y0}
2a+b<1 & \qquad\text{and}\qquad &  \frac{b}{1-2a-b}\po 1 + \frac{1}{1-2a-b}\pf \ \leqslant \ y_0
\end{eqnarray}
hold, where $y_0> 0.56$ is the solution of $y e^y = 1$. Note that, for $b=0$, \eqref{condition_a_b_y0} is saturated at $a=1/2$ while, for $a=0$, it is saturated at $b=b_{\max}$ where
\[\frac{b_{\max}}{1 -b_{\max}}\po 1 + \frac{1}{1-b_{\max}}\pf \ = \ y_0 \qquad \Leftrightarrow \qquad b_{\max} \  = \ 1 - \frac{1}{\sqrt{y_0+1}} \ \simeq \ 0.20\,.\]
Moreover, for all $b\in[0,b_{\max}]$, 
\[ \frac{b}{1-2a-b}\po 1 + \frac{1}{1-2a-b}\pf \ = \ y_0 \qquad \Leftrightarrow \qquad  a \ =\ \frac{1-b}{2} - \frac{1}{\sqrt{1+\frac{4y_0}{b}}-1}\,.\]
The set of parameters for which \eqref{condition_a_b_y0} holds is represented in red in Figure~\ref{fig-parametre}. Notice that the condition \eqref{condition_a_b_y0}, meant to be relatively simple and explicit, is the result of several rough bounds in the proofs to make the reading easier~: in any case, we don't expect our arguments, even applied with more care, to give sharp conditions.

\begin{theo}\label{theo:gattractive}
Under Assumption \ref{ass:lambda} and provided \eqref{condition_a_b_y0} holds,  there exists $\kappa\in(0,1)$ with the following property. For all $\gamma>0$, there exists $C_\gamma>0$ such that for all initial conditions with $z_0\wedge \tilde z_0 \geqslant \gamma$, 
\[W_1(\eta_t,\tilde \eta_t) \ \leqslant  \ C_\gamma \kappa^t W_1(\eta_0,\tilde \eta_0)\,.\]
In particular, there exists a unique non-zero equilibrium $g$ and, as soon as $\eta_0\neq \delta_{\bf 0}$, 
\[W_1(\eta_t,g) \ \underset{t\rightarrow +\infty}\longrightarrow \ 0\,.\]
 \end{theo}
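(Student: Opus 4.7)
The plan is to establish the exponential Wasserstein contraction via a coupling argument on~\eqref{eq:limitU}, and to deduce uniqueness and global attractivity of the non-zero equilibrium as corollaries. I would couple two solutions $(\bar U_t,\tilde{\bar U}_t)$ driven by the same Poisson random measure $\pi$, from an initial pair drawn from an optimal $W_1$-coupling of $\eta_0,\tilde\eta_0$. Since $\lambda$ is non-decreasing, atoms $(t,z)$ with $z\leq \lambda(\bar U_{t-}\wedge\tilde{\bar U}_{t-})$ reset both processes to $0$ simultaneously, while atoms with $z$ between the two rates reset only the larger one; the joint-jump mechanism is what annihilates the distance $D_t=|\bar U_t-\tilde{\bar U}_t|$, and this is the contractive feature of the coupling.

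Before exploiting this I would establish a uniform-in-time lower bound $\inf_{t\geq 0}(z_t\wedge\tilde z_t)\geq \gamma'(\gamma)>0$. Combining the open-loop representation $\bar U_t=h\int_{\tau_t}^{t}e^{-\alpha(t-s)}z_s\,ds$ (with $\tau_t$ the last jump time of $\bar U$ before $t$) with the instability of $\delta_{\bf 0}$ given by Proposition~\ref{prop:delta0instable1}, which is available since $kh>\alpha$ is implied by $2a+b<1$, would yield such a bound. This step is what forces the prefactor $C_\gamma$ to depend on $\gamma$ while keeping the rate $\kappa$ independent of it.

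Writing $m_t=\bar U_t\wedge\tilde{\bar U}_t$ and $M_t=\bar U_t\vee\tilde{\bar U}_t$, I would then derive a closed differential inequality for $\mathbb{E}[D_t]$ from three contributions: (i) the between-jumps drift $\dot D_t\leq -\alpha D_t+h|z_t-\tilde z_t|$ with $|z_t-\tilde z_t|\leq k\,\mathbb{E}[D_t]$ since $\lambda$ is $k$-Lipschitz; (ii) the joint-jump term $-\mathbb{E}[\lambda(m_t)D_t]$, whose decay rate benefits from the lower bound on $z_t$; and (iii) the one-sided-jump correction $\mathbb{E}[(\lambda(M_t)-\lambda(m_t))(m_t-D_t)]$ created by the fact that when only the larger particle jumps, $D_t$ becomes $m_t$ rather than $0$. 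The first part $2a+b<1$ of~\eqref{condition_a_b_y0} should make the linear rate in $\mathbb{E}[D_t]$ strictly negative after the previous step, and the second, finer part involving $y_0$ (the root of $ye^y=1$) should absorb the residual correction, plausibly by quantifying the age distribution of the last spike through an integral transform of the form $\int_0^\infty e^{-\alpha s}\lambda(\phi(s))\,ds$ whose closure yields the Lambert-type equation $ye^y=1$.

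I expect the main obstacle to be precisely this last step: the one-sided correction is of order $\mathbb{E}[D_t\,m_t]$ and cannot be controlled by $\mathbb{E}[D_t]$ alone without an a priori understanding of how $m_t$ is distributed. Closing it will likely require either a weighted Lyapunov functional of the form $\mathbb{E}[D_t\varphi(m_t)]$ tailored to the piecewise-linear rate of Assumption~\ref{ass:lambda}, or an equivalent quantitative control of the invariant age distribution producing the constant $y_0$. Once $\mathbb{E}[D_t]\leq C_\gamma\kappa^t\mathbb{E}[D_0]$ is established, the stated bound follows from $W_1(\eta_t,\tilde\eta_t)\leq \mathbb{E}[D_t]$. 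Uniqueness of the non-zero invariant measure $g$, whose existence is granted by Theorem~\ref{theo:41} and satisfies $z[g]=p_*>0$, follows by applying the contraction to any two non-zero invariant measures; and global attractivity on $\{\eta_0\neq\delta_{\bf 0}\}$ follows from Proposition~\ref{prop:delta0instable1}, which yields a time $T$ and $c>0$ with $z_t\geq c$ for $t\geq T$, so that the contraction can be applied on $[T,\infty)$ with $\tilde\eta_0=g$.
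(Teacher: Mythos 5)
You have correctly identified the synchronous coupling, the decomposition of the generator acting on $D_t=|\bar U_t-\tilde U_t|$, and you have put your finger precisely on the obstacle: the one-sided-jump contribution produces a term of order $\E[D_t\,m_t]$ that a differential inequality in $\E[D_t]$ alone cannot close. But you do not resolve this obstacle, and the speculative fixes you propose (a weighted Lyapunov functional $\E[D_t\varphi(m_t)]$, or an age-distribution transform) are not what carries the argument through.

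The idea you are missing is to change variables and first establish the contraction for the \emph{jump rates} $Z_t=\lambda(\bar U_t)$ and $\tilde Z_t=\lambda(\tilde U_t)$, not for the positions. Under Assumption~\ref{ass:lambda} the rate saturates at $\lambda_*$ once $\bar U_t\geq \lambda_*/k$, so $Z_t$ and $\tilde Z_t$ can differ only while at least one process is below the kink $\lambda_*/k$. The uniform lower bound $z_t\wedge\tilde z_t\geq x_\infty-\delta$ (obtained quantitatively from Proposition~\ref{PropOinstable}, not merely Proposition~\ref{prop:delta0instable1}) makes the drift $-\alpha u + h z_t$ strictly positive on $[0,\lambda_*/k]$, so in the absence of jumps both processes cross $\lambda_*/k$ within a fixed time $t_\delta$, after which $Z_t=\tilde Z_t=\lambda_*$. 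This converts the problem into bounding $\E|Z_t-\tilde Z_t|$ by what happens in the sliding window $[t-t_\delta,t]$, split according to whether the last jump there is synchronous, asynchronous, or absent — an \emph{integral} inequality with memory $t_\delta$ rather than a closed ODE. The asynchronous case is simply bounded by $\lambda_*$ times the probability of an asynchronous jump in the window, which is controlled by $\int\E|Z-\tilde Z|$, and iterating the resulting Gronwall-type bound on successive windows produces the factor $\nu_\delta t_\delta e^{\nu_\delta t_\delta}$, which is $<1$ exactly under the $y_0$-condition \eqref{condition_a_b_y0}. Only then is the Wasserstein contraction on positions recovered, in a second pass, by reusing the last-jump-time decomposition together with the already-proved geometric decay of $\E|Z_t-\tilde Z_t|$. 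Without the pass through $Z_t$, the $\E[D_t m_t]$ term you flagged remains unmanageable, so as written your plan does not lead to a proof.
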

Theorem~\ref{theo:gattractive} is proven in Section~\ref{sec:couplage-non-lineaire}. 
 \begin{rem}
 More precisely, what is established in the proof is that, for all $t\geq 0$,
\[ \E\po | U(t)-\tilde U(t)|\pf\ \ \le \ C_\gamma \kappa^t \E\po | U(0)-\tilde U(0)|\pf\,,\]
where $(U,\tilde U)$ is the synchronous coupling of the processes, namely is the coupling  obtained by using the same Poisson noise for both processes, hence making them jump simultaneously as much as possible.
 \end{rem}
 Finally, we come back to the study of the process $U^N$ of interacting neurons . Provided Assumption~ \ref{ass:lambda} and condition~\eqref{condition_a_b_y0} are enforced, let $p_* = \int_0^\infty \lambda g$ where $g$ is the unique positive non-linear equilibrium given by  Theorem~\ref{theo:gattractive}. For $u\in\R_+^N$ and $\delta>0$ we denote $\jump(u)=\sum_{i=1}^N \lambda(u_i)/N$ and $\{\jump \geqslant \delta\}=\{u\in\R_+^N,\ \jump(u) \geqslant \delta\}$ (and similarly for $\{\jump < \delta\}$, etc.).
 
 We strengthen again condition \eqref{condition_a_b_y0}, assuming that
\begin{eqnarray}\label{condition_a_b_2}
\frac{b}{1-2a-b} \exp \po \frac{b}{1-2a-b} \pf \po 1 + \frac{1}{1-2a-b} \exp\po \frac{4+2b}{1-2a-b}\pf \pf & \leqslant & 1\,,
\end{eqnarray}
still denoting $a=\alpha/(kh)$ and $b=\lambda_*/(kh)$.  Denoting $c=1/(1-2a-b)$ and $d=bc$, this condition is saturated when
\[c e^{4c} \ = \ \po \frac1d e^{-d}-1\pf e^{-2d}\,.\]
For a fixed $d>0$, there is no positive solution $c$ to this equation if $d\geqslant y_0$, and a unique one if $d\in(0,y_0)$. Moreover, the conditions $b>1$ and $a>0$ require $d+1<c$, which requires $d<y_1$ where $y_1\simeq 0.016$ is the solution of  
\[(y_1+1) e^{4(y_1+1)} \ = \ \po \frac1{y_1} e^{-y_1}-1\pf e^{-2y_1}\,.\]
The set of parameters for which \eqref{condition_a_b_2} holds is represented in dark red in Figure~\ref{fig-parametre}. Our last main result is concerned with the asymptotic exponentiality of the exit times from some domains. We do not deal directly with the last spiking time $L^N$, but rather with some stopping times that are related to $L^N$, namely times where the average jump rate $\bar \lambda$ becomes small or get far from its corresponding value in the non-linear equilibrium (see Section~\ref{sec:ldp1}, and in particular Remark~\ref{rem:expoL}, for a discussion on $L^N$ and its link with the times where $\bar \lambda$ gets small).
 
 \begin{theo}\label{theo:exitTimes}
Grant Assumption  \ref{ass:lambda} and condition~\eqref{condition_a_b_2}.  Let $\tau = \inf\{t\geqslant 0,\ U^N\notin \mathcal D\}$ where either~:
\begin{enumerate}
\item $\mathcal D = \{\jump \geqslant \gamma\}$ for some $\gamma\in (0,\lambda_*(1-a-b)/2)$.
\item $\mathcal D$ is a measurable subset of $\R_+^N$ such that, for  some $  \delta>0$,
\[\{p_*-\delta \leqslant \jump \leqslant p_*+\delta\} \ \subset \ \mathcal D \ \subset\ \{\jump \geqslant \delta\}. \]
\end{enumerate}
In case $\mathrm{(1)}$ let $\mathcal K =  \{\jump \geqslant \delta\}$ for some $\delta>\gamma$ and in case $\mathrm{(2)}$ let $\mathcal K =  \{p_*-\gamma \leqslant \jump \leqslant p_*+\gamma\} $ for some $\gamma\in(0,\delta)$. Then, in both cases, there exist $C,\theta,N_0$ such that the following holds:
\[\sup_{u\in\R_+^N}\mathbb E_u\po \tau \pf \ < \ \infty\,,\]
and  for all $N\geqslant N_0$,
\[\sup_{t\geqslant 0}\sup_{u\in\mathcal K}\left|\mathbb P_u \po \tau \geqslant t \mathbb E_u\po \tau\pf \pf - e^{-t}\right| \ \leqslant \ \varepsilon(N)   \]
and
\[\sup_{u,v\in \mathcal K}\left|\frac{\mathbb E_u\po \tau\pf }{\mathbb E_v\po \tau\pf }-1\right| \ \leqslant \ \varepsilon(N) \,,\]
where $\varepsilon(N) =C e^{-\theta N}$ in case $\mathrm{(1)}$  and $\varepsilon(N) = C\ln N /N^{1/4}$ in case $\mathrm{(2)}$ .
\end{theo}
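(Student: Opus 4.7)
The plan is to apply the general Theorem~\ref{thm:gene-exit} of Section~\ref{sec:generalresult}, which turns the asymptotic exponentiality of the rescaled exit time into the verification of the three ingredients quoted in the introduction: slow escape from $\mathcal D$ starting from $\mathcal K$, fast thermalization inside $\mathcal K$, and fast recurrence to $\mathcal K$ (or exit from $\mathcal D$) starting from $\mathcal D\setminus\mathcal K$. In addition one needs $\sup_u\E_u[\tau]<\infty$, which for each fixed $N$ follows from the standard observation that with positive probability no jump occurs in a time window long enough for the deterministic leak to drive every coordinate below the threshold making $\bar\lambda(U^N)\notin\mathcal D$; iterating yields a geometric tail and in particular a finite mean.

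For the slow escape, case $(1)$ is essentially contained in Theorem~\ref{thm:LDPLeta}: starting from $\mathcal K=\{\jump\geq\delta\}$ with $\delta>\gamma$, the lower-bound process $Z^N$ constructed in Section~\ref{sec:boundL}, whose large-$N$ limit has a unique strictly positive attracting equilibrium, stays bounded below by a positive quantity during a time of order $e^{(W_0-\eta)N}$ except on an event of probability $O(e^{-\theta N})$; this gives the sharp exponential rate in $\varepsilon(N)$. Case $(2)$ is more delicate: I would combine the propagation of chaos estimate of Proposition~\ref{prop:propchaos} (of order $N^{-1/2}$ in $W_1$) with the contraction of the nonlinear flow established in Theorem~\ref{theo:gattractive} to show that, on a time window of polynomial length, the empirical average $\jump(U^N)$ stays in a small neighbourhood of $p_*$ with overwhelming probability, the deviation being controlled by a Bernstein-type bound on the compensated jump martingale of $\jump(U^N)$; optimizing the length of the window against the deviation probability then leads to the announced rate $\ln N/N^{1/4}$.

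Fast thermalization inside $\mathcal K$ relies on the synchronous coupling $(U^N,\tilde U^N)$ of two copies of the particle system. The same computations that yield Theorem~\ref{theo:gattractive}, but now performed at finite $N$ and in expectation, show that, conditionally on both copies staying in a region where $\jump$ is bounded away from $0$, $\E\|U^N(t)-\tilde U^N(t)\|_1$ decays geometrically under condition~\eqref{condition_a_b_2}; hence after a time of order $\log N$ the two copies are within $o(1)$ of each other in $W_1$, uniformly over starting points in $\mathcal K$. Fast recurrence in case $(2)$, from points of $\mathcal D\setminus\mathcal K$ where $\jump\in[\delta,p_*-\gamma)\cup(p_*+\gamma,\lambda_*]$, follows along the same lines: the limit flow drives $\jump$ into the interior of $\mathcal K$ in bounded time thanks to the global attractivity of $g$, and propagation of chaos transfers this property to the finite system with overwhelming probability.

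The principal obstacle is case $(2)$. There both the slow escape and the fast thermalization must be made quantitative on long, polynomial-in-$N$, time intervals, so that the error term $\varepsilon(N)$ comes from a careful balancing of the propagation of chaos rate, the geometric contraction rate coming from~\eqref{condition_a_b_2}, and the deviation probability of $\jump(U^N)$ around $p_*$. The $\ln N/N^{1/4}$ finally reflects this trade-off, whereas case $(1)$ only requires the crude exponential-in-$N$ bound provided by Theorem~\ref{thm:LDPLeta} together with a bounded-time thermalization, which together produce the much sharper error $Ce^{-\theta N}$.
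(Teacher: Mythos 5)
Your overall architecture is the right one and matches the paper: invoke the general Theorem~\ref{thm:gene-exit} and supply the three ingredients (\eqref{eqdef:generalexit3}, \eqref{eqdef:generalexit11}, \eqref{eqdef:generalexit1}) via the LDP/comparison with $Z^N$, propagation of chaos, the nonlinear contraction, and a synchronous coupling of two copies of $U^N$. However, there are two genuine gaps in the sketch.

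First, the claim $\sup_{u\in\R_+^N}\E_u[\tau]<\infty$ does not follow from ``no jump in a long enough window so the leak drives $\jump(U^N)$ below the threshold.'' The time needed for $e^{-\alpha T}\max_i u_i$ to drop below $\lambda_*/k$ is $\ln(\max_i u_i)/\alpha$, unbounded over $u\in\R_+^N$, while the probability of a spike-free window of length $T$ is at most $e^{-\,\text{const}\cdot T}$ (and in fact the presence of neurons with $\lambda(U_i)=\lambda_*$ makes it of order $e^{-\lambda_* N T}$); no choice of $T$ gives a uniform positive lower bound. The paper's Proposition~\ref{prop:exit1} resolves this by a different event: on a unit time interval, with probability bounded below uniformly in $u$, every neuron with $u_i\geqslant e^{\alpha}\lambda_*/k$ spikes exactly once and the others do not, after which all potentials are below $e^{\alpha}\lambda_*/k+h$, a bound independent of $u$; only then a spike-free window of fixed length pushes $\jump$ below the threshold. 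This preliminary ``uniform compactification'' is needed and is absent from your proposal.

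Second, for the loss-of-memory estimate $\varepsilon_3$ you assert that synchronous coupling brings the two copies ``within $o(1)$ of each other in $W_1$.'' But Proposition~\ref{prop:eps1} requires an \emph{exact} coalescence event $\{X_t=Y_t\ \forall t\geqslant s_1\}$ with high probability; $W_1$-closeness does not yield~\eqref{eqdef:generalexit1}. The paper's Proposition~\ref{prop:exit2} supplies this via a specific mechanism: once both rates are saturated at $\lambda_*$ (which happens with high probability thanks to the contraction of Proposition~\ref{prop:couple-Lambda-N} and the drift comparison with the auxiliary $(\hat Z,\hat Y)$ process), any synchronous spike resets both coordinates to $0$ and hence makes them equal forever; requiring at least one such spike per neuron on $[N^\zeta/2+t_\delta,N^\zeta]$ forces $U^N(N^\zeta)=\tilde U^N(N^\zeta)$. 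Without some device of this kind turning contraction into exact coupling, the application of Theorem~\ref{thm:gene-exit} does not go through. The rest of your sketch (slow escape via $Z^N$ and the LDP in case~(1), propagation of chaos plus the nonlinear contraction in case~(2), and the final bookkeeping with $M=N$ resp.\ $M=\ln N$) is consistent with the paper's proof.
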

Theorem~\ref{theo:exitTimes} is proven in Section~\ref{sec:conclusion-expo}.

To conclude this presentation of our main results, let us notice that Theorem~\ref{theo:exitTimes} is obtained by applying a general result, Theorem~\ref{thm:gene-exit}, which is the topic of Section~\ref{sec:generalresult} and is of independent interest. Theorem~\ref{thm:gene-exit} establishes asymptotic exponentiality for a general Markov process, based on some coupling and hitting times estimates. It is a generalization (and follows the general strategy) of Brassesco, Olivieri and Vares for low-noise diffusion processes \cite{brassesco}.

\section{Exponential bound on the last spiking time}\label{sec:boundL}

The goal of this section is to establish Theorem~\ref{thm:LDPLeta}.

\subsection{An auxiliary Markov process}\label{section:auxiliaryprocess}
We start  by introducing a simple auxiliary Markov process whose large $N$ asymptotics are easy to study. Under minimal additional assumptions on the spiking rate $ \lambda $ and the synaptic weight $h,$ it is possible to compare the process $ \Lambda^N (t) $ introduced in \eqref{eq:lambdaN} with a simple Markov process $Z^N (t)$ that we are going to introduce next. Assumptions~\ref{ass:1} and \ref{ass:2} are enforced.

For all $N>h/u_*$, set 
\[z_N \  = \ \po 1 - \frac{\lambda_*}{kh}-\frac1N\pf \lambda\po u_* - \frac{h}{N}\pf - \frac{\lambda_*}{N}\]
and consider the function $m_N$ on $\R_+$ given by 
\[m_N(z) \ = \   z+ \frac{kh}{N}\po 1 - \frac{z}{\lambda\po u_*-\frac{h}{N}\pf} - \frac{1}{N}\pf_+ - \frac{\lambda_*}{N} \]
where $(x)_+=x\vee 0$. Using that $kh>\lambda_*$, we see that $m_N$ is a positive increasing function for $N$ large enough. Let $N_0>h/u_*$ be large enough so that for all $N\geqslant N_0$, $z_N>0$ and $m_N$ is a positive increasing function. Remark that $z_N$ has been defined so that $z_N+\lambda_*/N$ is the solution of $z=m_N(z)$ and such that for all $0 \le  z < z_N +\lambda_*/N, $ $m_N ( z) > z .$ 

For $N\geqslant N_0$, introduce the auxiliary Markov process  $Z^N $ taking values in $\R_+$ having generator given for all smooth test functions $\varphi$ by 
\begin{equation}\label{eq:zN}
 A^N  \varphi (z) = - r z \varphi' (z) +  N z   \co  \varphi  \po z_N \wedge m_N(z)\pf  - \varphi( z) \cf  .
\end{equation}

\begin{prop}\label{prop:Z} 
Grant Assumptions \ref{ass:1}, \ref{ass:2}, suppose that $N\geqslant N_0$ and $ Z^N (0) \le  (  \Lambda^N (0)/N)\wedge z_N $.
Then there exists a coupling of $Z^N $ and $ \Lambda^N $ such that $ Z^N (t) \le \Lambda^N (t)/ N $ for all $ t \geq 0.$ 
\end{prop}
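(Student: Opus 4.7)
My plan is to construct a synchronous-type coupling in which $Z^N$ attempts to jump only at the spike times of the particle system $U^N$, with a Bernoulli thinning tuned so as to preserve the invariant $NZ^N(t)\leq \Lambda^N(t)$ at all times. Concretely, I would enlarge the probability space with i.i.d.\ $\mathrm{Unif}[0,1]$ variables $(V_k)_{k\geq 1}$ independent of the Poisson measures $(\pi^i)_{1\leq i\leq N}$, and declare that at the $k$-th spike time $T_k$ of $U^N$, $Z^N$ jumps from $z:=Z^N(T_k-)$ to $z_N\wedge m_N(z)$ if $V_k\leq Nz/\Lambda^N(T_k-)$, and stays put otherwise; between spikes, $Z^N$ would solve $\dot{Z}^N = -rZ^N$. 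As long as the invariant $NZ^N\leq \Lambda^N$ holds at $T_k-$, the thinning probability lies in $[0,1]$, and the resulting jump intensity of $Z^N$ equals $Nz$, matching the generator~\eqref{eq:zN}, so the construction is consistent with the law of $Z^N$. The problem then reduces to propagating the invariant $NZ^N(t)\leq \Lambda^N(t)$ from $t=0$ onward.

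The key deterministic inputs are two estimates on $\Lambda^N$. First, between jumps, Assumption~\ref{ass:2} yields
\[ \frac{d}{dt}\Lambda^N(t) \;=\; -\alpha \sum_{i=1}^N \lambda'(U_i^N(t))\,U_i^N(t) \;\geq\; -r\,\Lambda^N(t), \]
so $\Lambda^N/N$ decays at rate at most $r$, exactly matching the deterministic decay of $Z^N$; by Gronwall the invariant is preserved on any interval free of spikes. Second, at a spike of neuron $i$ at time $s$, writing $\ell := \Lambda^N(s-)/N$ and $J := \{j\neq i : U_j^N(s-)\leq u_*-h/N\}$, the reset contributes $-\lambda(U_i^N(s-))\geq -\lambda_*$ while each $j\in J$ contributes at least $kh/N$ thanks to the slope-$k$ lower bound from Assumption~\ref{ass:2}, giving $\Delta \Lambda^N \geq -\lambda_* + (kh/N)\,|J|$. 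Since every $j\notin J$ with $j\neq i$ satisfies $\lambda(U_j^N(s-))\geq \lambda(u_*-h/N)$, a Markov-type bound yields $|J|\geq N-1-N\ell/\lambda(u_*-h/N)$. Combining with the trivial $\Delta\Lambda^N\geq -\lambda_*$ to justify inserting a positive-part clipping, division by $N$ produces exactly
\[ \frac{\Lambda^N(s)}{N} \;\geq\; m_N(\ell). \]

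The induction then proceeds by case analysis at each jump. The construction automatically maintains $Z^N(t)\leq z_N$ since jumps take values in $[0,z_N]$ and decay preserves this range. Assume $z:=Z^N(s-)\leq \ell$ and set $\ell_{new} := \Lambda^N(s)/N\geq m_N(\ell)$. If $Z^N$ jumps, then $z_{new} = z_N\wedge m_N(z)\leq m_N(z)\leq m_N(\ell)\leq \ell_{new}$ by monotonicity of $m_N$ (valid for $N\geq N_0$), and in the sub-case $m_N(z)\geq z_N$ the same monotonicity yields $\ell_{new}\geq m_N(\ell)\geq m_N(z)\geq z_N = z_{new}$. If $Z^N$ does not jump, then either $\ell\leq z_N+\lambda_*/N$, in which case the property $m_N(x)\geq x$ on this range (recalled just above~\eqref{eq:zN}) gives $\ell_{new}\geq \ell\geq z = z_{new}$; or $\ell>z_N+\lambda_*/N$, in which case the trivial bound $\Delta\Lambda^N\geq -\lambda_*$ gives $\ell_{new}\geq \ell-\lambda_*/N>z_N\geq z = z_{new}$. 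In every case the invariant is preserved, completing the induction.

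I expect the main obstacle to be the jump estimate $\Lambda^N(s)/N\geq m_N(\ell)$: its constants have to reproduce exactly the auxiliary dynamics of~\eqref{eq:zN}. This hinges on the clean split between neurons below $u_*-h/N$ (each with a guaranteed $kh/N$ gain from the slope bound) and those above (whose count is controlled by a Markov-type bound on $\Lambda^N/\lambda(u_*-h/N)$). Everything else is monotone comparison bookkeeping using that $m_N$ is increasing with fixed point at $z_N+\lambda_*/N$.
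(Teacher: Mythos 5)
Your proof is correct, and its mathematical content coincides with the paper's: both rest on the same two deterministic estimates on $\Lambda^N$ --- the between-jump decay $\tfrac{d}{dt}\Lambda^N\ge -r\Lambda^N$ coming from $\lambda'(u)u\le \tfrac{r}{\alpha}\lambda(u)$, and the at-jump bound $\Lambda^N(s)/N\ge m_N(\Lambda^N(s-)/N)$, which is precisely \eqref{eq:increaselambda} --- and both then close the induction by the same monotonicity bookkeeping, using that $m_N$ is increasing for $N\ge N_0$, that $m_N(z)\ge z$ on $[0,z_N+\lambda_*/N]$, and that $Z^N$ stays in $[0,z_N]$. What differs is only the realization of the coupling. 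The paper drives $Z^N$ and $U^N$ by the same Poisson measures $\pi^i$, letting $Z^N$ jump whenever a point $(t,z)$ has $z\le Z^N(t-)$, and then argues that (up to the a.s.\ infinite time where the ordering would break) a jump of $Z^N$ is always a jump of the system. You instead enlarge the space with auxiliary uniforms and let $Z^N$ attempt a jump only at spike times $T_k$ of $U^N$, accepting with probability $NZ^N(T_k-)/\Lambda^N(T_k-)$. Your thinning version yields the same marginal law for $Z^N$ (intensity $NZ^N(t-)$, jump map $z\mapsto z_N\wedge m_N(z)$, drift $-rZ^N$) but makes it automatic that $Z^N$ never jumps alone, so the case analysis only needs to distinguish ``spike accepted'' from ``spike rejected.'' In the paper's synchronous construction this exhaustiveness is less immediate, since $NZ^N(t-)\le\Lambda^N(t-)$ does not by itself give $Z^N(t-)\le\lambda(U_i^N(t-))$ for the particular $i$ driving the Poisson point; your construction cleanly sidesteps this. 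Both routes lead to the same conclusion.
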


\begin{proof} 
{\bf Step 1.}  Consider the effect of jumps on the original process $ \Lambda^N (t).$  Since $ \lambda $ is increasing,
\[\Lambda^N(t) \ \geqslant \ \lambda\po u_*-\frac hN\pf \mathrm{card}\left\{ i : U^N_i (t ) >  u_*-\frac h N \right\}\]
and thus
\[\mathrm{card}\left \{ i : U^N_i (t) \leqslant u_*-\frac h N \right\}  \ \geqslant  N - \frac{\Lambda^N(t)}{\lambda\po u_*-\frac hN\pf}  . \]
 By choice of $u_* $, if a spike occurs at time $t$, all the neurons satisfying $U^N_i (t- ) \leqslant u_*-h/N$ (except possibly one of them that is  spiking) 
 receive an increase of their spiking rate given by 
$$ \lambda \po U^N_i (t- )  + \frac{h}{N} \pf - \lambda \po U^N_i (t- ) \pf \ \geq\  \frac{kh}{N} \,.$$
On the other hand,  the spiking particle is reset to $0$ and $ \lambda (0) = 0$, so that its spiking rate is at most decreased by $\lambda_*$. Therefore, if a spike occurs at time $t$, 
\begin{equation}\label{eq:increaselambda}
 \Lambda^N (t) \ \geq \ \Lambda^N ({t-})  - \lambda_*  +   \frac{kh}{N}  \po N - 1 - \frac{\Lambda^N(t-)}{\lambda\po u_*-\frac hN\pf}\pf_+ \ = \  N m_N\po \frac{\Lambda^N(t-)}N\pf .
\end{equation} 
The definition of $z_N$ has been chosen to ensure that
\[\frac{\Lambda^N(t-)}N \leqslant z_N + \frac{\lambda_*}{N} \qquad \Rightarrow \qquad \Lambda^N(t) \geqslant \Lambda^N(t-)\,.\]
Besides, 
\[\frac{\Lambda^N(t-)}N \geqslant z_N + \frac{\lambda_*}{N} \qquad \Rightarrow \qquad \frac{\Lambda^N(t)}N \geqslant z_N\,.\]

{\bf Step 2.}  We couple $ Z^N$ and $ \Lambda^N /N $ by forcing them to jump together as often as possible (for a general discussion on  synchronous couplings for PDMP's, see \cite{DGM}). In other words we let $Z^N$ evolve according to
\[d  Z^N(t) \ =  \ -r Z(t)dt + \po \po z_N\wedge m_N( Z^N(t-))\pf - Z^N(t-)\pf \int_{\R_+} \indiq_{\{z \leqslant  Z^N(t-)\}} \sum_{i=1}^N\pi^i(dt,dz) . \]
 Since $ d \Lambda^N (t) \geq - r \Lambda^N (t) dt $ in between successive jumps of the system, the deterministic flow preserves the stochastic ordering $ Z^N (t) \le \Lambda^N (t)/N$ up to the first jump time. Remark that $Z^N (t)  \in [0,z_N]$ for all $t\ge 0$. Jumps of the neuron system arrive at rate $ \Lambda^N ({t-}), $ and those of the Markov process at rate $NZ^N ({t-}), $ such that a jump of $Z^N$ is necessarily a jump of  the original system  at least up to time $   \tau  = \inf \{ t : Z^N (t) >\Lambda^N (t) / N \} $. One the one hand, if $\Lambda^N$ jumps alone at a time $t$ where $\Lambda^N(t-)/N \geqslant Z^N(t-)$, then either $\Lambda^N(t-)/N \leqslant z_N + \lambda_*/N $ in which case $\Lambda^N(t)\geqslant \Lambda^N(t-) \geqslant N Z^N(t)$, or   $\Lambda^N(t-)/N \geqslant z_N +\lambda_*/N$ in which case $\Lambda^N(t)/N\geqslant z_N \geqslant Z^N(t)$. On the other hand,  if both $\Lambda^N$ and $Z^N$ jump at a time $t$ where $\Lambda^N(t-)/N \geqslant Z^N(t-)$, then 
\[Z^N(t) \ \leqslant \ m_N\po Z^N(t-)\pf \ \leqslant \ m_N\po \frac{\Lambda^N(t-)}{N}\pf \ \leqslant \ \frac{\Lambda^N(t)}{N}\,,\]
where we used that $m_N$ is increasing for $N\geqslant N_0$ together with \eqref{eq:increaselambda}. This proves that the stochastic ordering $ Z^N (t) \le \Lambda^N (t)/N$ is preserved at the jump times, which concludes  our proof (in particular, $\tau  = +\infty$ almost surely).

\end{proof}

\subsection{Lower bounding the last spiking time of the system}\label{sec:ldp1}
Under the conditions of Theorem~\ref{prop:dieout}, the finite system of interacting neurons $U^N $ possesses a last spiking time $ L= L^N.$ This last spiking time is not a stopping time of the process. However we may consider an enlargement of the original process $ U^N$ such that $ L $ becomes indeed a stopping time. For that sake consider the Markov process $ (U^N (t) , E (t) )  \in \R_+^{N + 1 } $  which is defined as follows. We fix an i.i.d. sequence $(\tau_n)_{n \geq 0 } $ of exponential random variables having parameter $1,$ independent of anything else, and we take $ E(0 ) = \tau_0 . $ Up to the first jump time $T_1,$ the process $ U^N = (U^N_1, \ldots, U^N_N)$ evolves according to 
\begin{equation}\label{eq:dynU}
 d U^N_i ( t) = - \alpha U^N_i (t) dt , 1 \le i \le N, 
\end{equation}  and the process $E (t) $ according to 
\begin{equation}\label{eq:dynE}
 d E (t) = - \Lambda^N (t) dt , \mbox{ where as before } \Lambda^N (t) = \sum_{i=1}^N \lambda ( U^N_i (t) ) .
\end{equation} 
We define the first jump time of the process by 
$$ T_1 = \inf \{ t \geq 0 : E ({t-})= 0 \} .$$ 
At time $T_1,$ the process $ U^N ({T_1-})   $ makes a transition $ U^N ({T_1-})   \mapsto U^N ({T_1-})   + \Delta_i (U^N ({T_1-})   ) , 1 \le i \le N, $ with probability $ \frac{\lambda (U^N_i ({T_1-})  }{\Lambda^N ({T_1- }) } .$  
Moreover, we put $ E ({T_1}) := \tau_1 ,$ and start again with the dynamics \eqref{eq:dynU}-\eqref{eq:dynE} up to the next jump 
$$ T_2 = \inf \{ t \geq T_1 : E ({t- }) = 0 \} .$$  
It is evident that the process $ U^N (t)$ follows the same dynamics as the one given in \eqref{eq:generator0}. Moreover, 
\begin{equation}\label{eq:lastspike}
 L = \inf \{ t : E (t) > \int_0^\infty \sum_{i=1}^N \lambda ( e^{- \alpha s } U^N_i (t) ) ds \} 
\end{equation} 
is now a stopping time with respect to the canonical filtration of the enlarged process $ (U^N, E ).$ That being said, in fact we won't use this enlarged process in the following.

\begin{rem}
The above construction is somewhat similar to the one considered in a simpler frame in  \cite{cottrell}. Therein, the deterministic dynamic is given by $ d U^N_i (t) = - dt $ and $ \lambda ( u) = u .$  
\end{rem}

\begin{rem}\label{rem:expoL}
 With the coupling constructed in Proposition \ref{prop:Z}, we obviously have that 
$ L' \le L $ almost surely, where $L' $ is the last jump time of $Z^N .$ Indeed, there is a spike in the system whenever $Z^N$ jumps.

To control the behavior of $ L' $ for large $N, $ it is however easier to consider  
\begin{equation}\label{eq:leta}
  L^N_\eta = \inf \{ t : Z^N (t) \le \eta \} ,
\end{equation}
for some small $\eta > 0$. 
Since the process $Z^N  $  takes values in $[0,z_N]\subset [ 0, z_\infty]$ with $z_\infty = (1-\lambda_*/(kh))\lambda(u_*)$, in the absence of jumps, it needs at most a time 
\begin{equation}\label{eq:S}
 S  = \frac{ \ln (z_\infty) - \ln \eta }{r} 
\end{equation} 
to reach the level $ \eta .$ Therefore, 
$ \{ L^N_\eta > t +S  \} \subset \{ L' > t \} ,$
implying the lower bound 
\begin{equation}\label{eq:lowerboundL}
L^N =  L \geq L' >  L^N_\eta - S  \mbox{ almost surely.} 
\end{equation} 
\end{rem}

In what follows we provide large deviation estimates for $ L^N_\eta $ as $N, $ the number of neurons, tends to infinity. Being interested in $L^N_\eta $ implies that we  only consider the evolution of $Z^N (t)$ for $ t \le L ^N_\eta .$ We may therefore study a slightly different process $ \bar Z^N $ starting from $ \bar Z^N (0) = Z^N (0)\leqslant z_N$ and having generator 
$$  \bar A^N \varphi(z) = - r z\varphi' (z)  + N (\eta \vee z\wedge z_\infty )  \co \varphi \po z\vee z_N\wedge   m_N(z)\pf - \varphi(z) \cf  $$
instead of studying $ Z^N .$ The advantage of considering $ \bar Z^N$ instead of $ Z^N$ is that its jump rate function 
\begin{equation}\label{eq:f}
 f (z) = \eta \vee z \wedge z_\infty 
\end{equation}
is strictly lower bounded,  bounded and Lipschitz continuous. 

\subsection{Large deviations for the auxiliary Markov process}
To study the large deviation principle for the auxiliary process $ \bar Z^N, $ we rely on the theory developed in Chapter 10 of Feng and Kurtz \cite{feng-kurtz}.  
As $N \to \infty, $ 
\begin{eqnarray*}
(x\vee z_N\wedge   m_N(x) ) - x  &=& \frac1N \po kh \po 1 - \frac{x}{\lambda(u_*)}\pf_+ - \lambda_*\pf \indiq_{x<z_\infty} + \underset{N\rightarrow +\infty}  o\po \frac1N\pf\\
 &=& \frac1N \po kh \po 1 - \frac{x}{\lambda(u_*)}\pf- \lambda_*\pf_+ + \underset{N\rightarrow +\infty}  o\po \frac1N\pf ,
\end{eqnarray*} 
which yields the convergence of the generator $\bar A^N \varphi (x) \to (-rx + f(x)G(x))  \varphi' (x) $ with
\[G(x)  \ = \ \po kh \po 1 - \frac{x}{\lambda(u_*)}\pf- \lambda_*\pf_+\,. \]
 The associated dynamics of the limit process is given by 
\begin{equation}\label{eq:limitz}
\dot x_t = - r x_t  + G(x_t) f(x_t)  .
\end{equation}
To quantify the convergence of $ \bar Z^N $ to this limit trajectory $x_t ,$ we consider the associated exponential semigroup 
$$ H^N \varphi(x ) = \frac{1}{N} e^{-N \varphi } \bar A^N e^{N \varphi } (x) = - r x  \varphi' (x)  + f(x)  ( e^{N ( \varphi (x\vee z_N\wedge   m_N(x)) - \varphi (x))  } - 1 ) ,$$
which converges, as $ N \to \infty , $ to 
$$ H \varphi (x) :=  - r x  \varphi' (x)  + f(x)  ( e^{ G (x) \varphi'  (x) }- 1) . 
$$
Notice that, for all $ x \geq z_\infty$,  $ G(x) = 0 $ and thus $ H \varphi (x) = - r x \varphi'(x) $. We define for any $p \in \R $
$$ H(x, p ) := f(x)   ( e^{ G (x) p }- 1)  - r x p \quad  \mbox{  and  } \quad 
 L(x, q) = \sup  \{ p  q - H(x, p ) , p \in \R \} .$$ 
Clearly, $ L ( x, q) = + \infty $ if $ q < - r x .$ Moreover, for all $x$ such that $ 0 \le x <  z_\infty  $ and all $ q \geq -r x , $ 
$$ L(x, q)  = \frac{q +r x}{G(x)  } \ln \left( \frac{q +r x}{G (x) f(x) } \right) - \frac{q +r  x}{G(x)} + f(x) = f(x)   \left\{ u \ln u - u + 1 \right\} ,$$
where 
$$ u = \frac{q +r x}{G (x) f(x) }   $$
and $u\ln u =0$ if $u=0$. Moreover, if $ x \ge  z_\infty, $ then 
$$ L ( x, q ) = \left\{ 
\begin{array}{ll}
0 & \mbox{ if } q = - r x \\
+ \infty & \mbox{ if } q \neq  - r x 
\end{array}
\right\} .$$ 
The solution  $ x_t $  of the limit equation \eqref{eq:limitz} satisfies
$$ L ( x, \dot x ) = 0,$$
since in this case $u= 1.$

\begin{theo}[Theorem 10.22 of \cite{feng-kurtz}]\label{theo:ldp}
Suppose that the large deviation principle holds in $\R$ for $  Z^N (0) = \bar Z^N (0) $ with good rate function $ I_0$ and grant Assumptions \ref{ass:1} and \ref{ass:2}. 
Then the large deviation principle holds for $\bar Z^N$ in $ D( \R_+, \R_+) $ with good rate function 
$$ I ( x) = \left\{
\begin{array}{ll} 
I_0 ( x_0 )  + \int_0^\infty L ( x_s , \dot x_s ) ds , & \mbox{ if $x$ is absolutely continuous}\\
+\infty , & \mbox{ else}
\end{array}
\right\} .$$
In particular, for any open set $ A \in \D ( \R_+, \R_+ ), $ 
$$ \liminf_{N \to \infty} \frac1N \log \P ( \bar Z^N \in A ) \geq - \inf_{ x \in A} I(x) ,$$
and for any closed set $ B \in \D ( \R_+, \R_+), $
$$ \liminf_{N \to \infty} \frac1N \log \P (\bar Z^N \in B ) \le - \inf_{ x \in B} I(x) .$$
\end{theo}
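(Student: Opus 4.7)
The plan is to invoke Theorem 10.22 of Feng--Kurtz \cite{feng-kurtz} directly, after verifying its hypotheses in our concrete setting. The main analytic input, namely the convergence of the nonlinear (exponential) generators $H^N$ to the Hamiltonian $H$, has essentially been produced in the display preceding the statement; what remains is to promote this to the functional-analytic mode of convergence required by \cite{feng-kurtz}, to check exponential tightness, and to identify the Lagrangian by a routine Legendre transform.

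First I would make precise the convergence $H^N\varphi \to H\varphi$. For any $\varphi\in C^1_b(\R_+)$, the pointwise convergence follows from the expansion
\[(x\vee z_N\wedge m_N(x))-x \ = \ \frac{G(x)}{N} + o\!\left(\frac{1}{N}\right)\,,\]
uniformly on compact subsets of $\R_+$, since $z_N \to z_\infty$ and $m_N$ converges uniformly on compacts to $x+G(x)/N$ modulo the $o(1/N)$ term. A Taylor expansion of the exponential then gives uniform convergence $H^N\varphi\to H\varphi$ on compact sets in $x$, which is the form of convergence of operators used in Chapter~10 of \cite{feng-kurtz}. Second, I would establish exponential tightness of $(\bar Z^N)_N$ in $D(\R_+,\R_+)$: the jump rate $f$ is bounded by $z_\infty$, jump sizes are bounded by $z_\infty + O(1/N)$, and the drift $-rx$ contracts toward the origin, so $\bar Z^N$ is confined to a fixed compact up to events of exponentially small probability; the abstract exponential tightness criteria of \cite{feng-kurtz} (Chapter~4) apply.

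Third, I would verify the comparison principle for the limiting Hamilton--Jacobi equation $\partial_t u = Hu$. Since $f$ and $G$ are bounded and Lipschitz on $[0,z_\infty]$, $H(x,p)$ is continuous in $(x,p)$ and convex in $p$; the standard theory of viscosity solutions on a half-line with a compactly supported nonlinearity gives comparison. Once comparison holds together with the convergence and exponential tightness, Feng--Kurtz Theorem~10.22 yields the large deviation principle with rate function
\[I(x)\ =\ I_0(x_0)+\int_0^\infty L(x_s,\dot x_s)\,ds,\]
where $L(x,q)=\sup_p(pq-H(x,p))$. A direct Legendre computation, setting $\partial_p H=q$, gives the critical relation $q+rx = f(x)G(x)e^{G(x)p}$, whence $u:=(q+rx)/(f(x)G(x))$ satisfies the optimality equation and one obtains $L(x,q)=f(x)(u\ln u-u+1)$ on $\{x<z_\infty,\,q\ge -rx\}$. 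The degenerate cases $u=0$, $x\ge z_\infty$, and $q<-rx$ are handled by passing to the limit or noting that $H(x,\cdot)$ becomes linear/bounded above.

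The main obstacle I anticipate is the behaviour at the threshold $x=z_\infty$ where $G$ vanishes and the Hamiltonian degenerates: the limiting dynamics becomes purely deterministic ($\dot x=-rx$) and no jumps are allowed without paying an infinite cost, so one must check that neither the comparison principle nor the convergence $H^N\varphi\to H\varphi$ breaks down across this threshold. A secondary, more technical point is the non-smoothness of $z\mapsto z\vee z_N\wedge m_N(z)$ at $z=z_N$, which however is innocuous because the jump rate $f$ and the target function are both continuous and bounded, so the small discrepancy induced by truncating at $z_N$ produces an error that vanishes as $N\to\infty$ and does not contribute to the rate function.
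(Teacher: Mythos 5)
Your proposal is correct and follows essentially the same route as the paper: both invoke Theorem 10.22 of Feng--Kurtz after verifying its hypotheses for this jump process. The paper is more compressed, identifying the jump kernel $\eta(x,dz)=f(x)\delta_{G(x)}(dz)$, drift $b(x)=-rx+f(x)G(x)$, and Lyapunov weight $\psi(x)=1+|x|$, and then citing the packaged conditions of Chapter~10 (Condition~10.3, Lemmas~10.4 and~10.12) that deliver exactly the convergence of nonlinear generators, exponential tightness, and the comparison principle that you re-derive by hand.
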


\begin{proof}
We are in the framework of Chapter 10 of \cite{feng-kurtz} with $ \eta (x, dz) = f(x) \delta_{G(x) } (dz), b (x) = - r x + f(x) G(x)  $ and $ \psi ( x) = 1 + |x|.$ It is immediate to check that Condition 10.3 is satisfied and that Lemma 10.4 and Lemma 10.12 hold, since $f$ is lower bounded and since $G$ is Lipschitz and bounded. As a consequence, all conditions required to apply Theorem 10.17 and Theorem 10.22 of \cite{feng-kurtz} are met which concludes the proof. 
\end{proof}

\subsection{Stability of the limit system}\label{sec:LDP2}
We briefly discuss the stability properties of the limit system \eqref{eq:limitz}.  In this section we strengthen Assumption \ref{ass:2} by assuming that $kh > \lambda_*+r$. We also chose $\eta$ small enough, more precisely $\eta<x_\infty$ with
\[x_\infty \ :=\ \lambda(u_*) \po 1 - \frac{\lambda_*+r}{kh}\pf \ \in \ (0,z_\infty)\,.\]
In that case $x_\infty$ is the unique equilibrium of the limit equation \eqref{eq:limitz}, and it is globally attracting on $(0,+\infty)$.  Recall we want to study the exit time $ L^N_\eta $ defined in \eqref{eq:leta}.  To do so, one classically introduces the cost functionals 
$$ V_t (x, y ) = \inf_{  x : x_0  = x, x_t = y }  I_t ( x )  , \mbox{ 
where } I_t  (x ) = \int_0^t L (x_s, \dot x_s )   ds\,, \; \mbox{  and  }  \; 
 V (x, y ) = \inf_{t\geqslant 0} V_t ( x, y) . 
$$
Then we have 
\begin{prop}\label{prop:ldp1}
Grant the conditions of Theorem \ref{theo:ldp}, assume moreover that $kh>\lambda_*+r$ and $\eta<x_\infty$, and fix $ x>  \eta$.  
Then,   denoting $ \bar V_\eta = V ( x_\infty , \eta) $,
\begin{equation}\label{eq:result1}
\forall \delta>0\,,\qquad  \lim_N \P_x ( e^{ (\bar V_\eta - \delta) N} < L^N_\eta  < e^{(\bar V_\eta + \delta ) N } ) = 1 
\end{equation}
and
\begin{equation}\label{eq:result2}
 \frac1N \log \E_x \po L_\eta^N\pf \  = \ \bar V_\eta\,  .
\end{equation} 
Moreover,
\[+\infty \ > \ \frac{x_\infty - \eta }{r} \ \geqslant \ \bar V_\eta \ \geqslant \  \frac1r \int_{\eta}^{x_\infty} Q\po \frac{r}{G(z)}\pf   dz \ >\ 0\,,\]
where $Q(u):=u \ln u - u + 1 $.
\end{prop}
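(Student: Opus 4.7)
The plan is to establish the chain of bounds on $\bar V_\eta$ first, and then deduce the distributional asymptotic~\eqref{eq:result1} and the expectation asymptotic~\eqref{eq:result2} via the classical Freidlin--Wentzell programme applied to the LDP from Theorem~\ref{theo:ldp}.

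For the upper bound $\bar V_\eta \leq (x_\infty-\eta)/r$, I would evaluate the cost functional along the explicit deterministic-decay trajectory $x_t = x_\infty e^{-rt}$, which reaches $\eta$ at $T = r^{-1}\ln(x_\infty/\eta)$. Along this path $\dot x_t + r x_t = 0$, corresponding to $u = 0$ in the parametrisation $u = (\dot x + r x)/(G(x) f(x))$, and since $f(x) = x$ on $[\eta, x_\infty] \subset [\eta, z_\infty]$, one gets $L(x_t, \dot x_t) = f(x_t) Q(0) = x_t$ and hence $I_T = (x_\infty - \eta)/r$. For the lower bound, the first step is pointwise: for $x \in (\eta, x_\infty)$ and $v \in [-rx, 0)$, writing $u = (v + rx)/(G(x) x) \in [0, r/G(x))$ one gets $L(x, v)/|v| = Q(u)/(r - u G(x))$. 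Since $G$ is affine decreasing on its positivity set and satisfies $G(x_\infty) = r$ (from the equilibrium equation $r x_\infty = x_\infty G(x_\infty)$), we have $u < r/G(x) < 1$ throughout this range; $Q$ is non-negative and decreasing on $[0, 1]$ because $Q'(u) = \ln u$, so $Q(u) \geq Q(r/G(x))$, while $r - u G(x) \leq r$, giving $L(x, v)/|v| \geq Q(r/G(x))/r$. The second step is a Banach-indicatrix argument: for any absolutely continuous path $x_\cdot$ from $x_\infty$ to $\eta$ of finite action, the level-crossing identity $\int_0^T h(x_t) |\dot x_t| \indiq_{\dot x_t < 0} dt = \int_\R h(y) N_y^- dy$ (with $N_y^- = \#\{t \in [0,T] : x_t = y, \dot x_t < 0\}$) applied to $h = Q(r/G(\cdot))/r$ gives
\[
\int_0^T L(x_t, \dot x_t) dt \ \geq \ \frac{1}{r} \int_\R Q(r/G(y)) N_y^- dy \ \geq \ \frac{1}{r} \int_\eta^{x_\infty} Q(r/G(y)) dy,
\]
using that $N_y^- \geq 1$ for every $y \in (\eta, x_\infty)$ since any continuous path from $x_\infty$ to $\eta$ must cross each such level going downward at least once. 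Strict positivity is immediate because $Q(r/G(y)) > 0$ on this interval.

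The distributional and expectation asymptotics then fall within the Freidlin--Wentzell framework in dimension one (cf.~\cite{FW}). Two inputs are needed: first, under $kh > \lambda_* + r$, the limit ODE $\dot x = -r x + f(x) G(x)$ has $x_\infty$ as the unique globally attracting equilibrium on $(0, +\infty)$, so from any initial $x > \eta$ both the limit trajectory and, via the LDP, $\bar Z^N$ reach a small neighbourhood $U$ of $x_\infty$ within bounded time with probability tending to one; second, on a fixed time window of length $T$ large enough to admit the quasi-optimal exit trajectory constructed above, the LDP provides a lower bound $\exp(-N(\bar V_\eta + \delta/2))$ on the exit probability starting from $U$, and a matching upper bound $\exp(-N(\bar V_\eta - \delta/2))$ uniformly on compact subsets of $[\eta, +\infty)$. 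Partitioning $[0, \infty)$ into windows of length $T$, iterating the Markov property, and exploiting fast recurrence of $\bar Z^N$ to $U$ between unsuccessful excursions yields~\eqref{eq:result1} and the uniform exponential tail $\P_x(L^N_\eta > t\, e^{N(\bar V_\eta + \delta)}) \leq e^{-c t}$ from which~\eqref{eq:result2} follows.

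The main technical obstacle, in my view, is the Freidlin--Wentzell bookkeeping: uniform-in-initial-condition LDP estimates over a neighbourhood of $x_\infty$ together with a careful control of excursions that leave $U$ without hitting level $\eta$, ensuring fast return to $U$ with overwhelming probability. The pointwise estimate and the level-crossing step in the lower bound on $\bar V_\eta$ are the only substantive ingredients beyond the classical exit-time theory.
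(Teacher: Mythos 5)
Your proposal follows essentially the same route as the paper: the upper bound comes from the pure relaxation trajectory $x_t = x_\infty e^{-rt}$, the lower bound from the pointwise inequality $L(x,v) \geq |v|\,Q(r/G(x))/r$ on $(\eta,x_\infty)$ using that $Q$ is decreasing on $[0,1]$ and that $G$ is decreasing with $G(x_\infty)=r$, and the asymptotics \eqref{eq:result1}--\eqref{eq:result2} from the standard Freidlin--Wentzell exit-time machinery (the paper delegates this last step to Theorem~7.8 of Kratz--Pardoux~\cite{pardoux}). The only variation is cosmetic: where the paper reduces to monotone-decreasing competitor paths and then integrates by a change of variable, you invoke a level-crossing (Banach-indicatrix) identity to handle arbitrary paths directly, which yields the identical bound $\frac1r\int_\eta^{x_\infty}Q(r/G(y))\,dy$.
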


\begin{proof}
To get the upper bound on $\bar V_\eta$, choose  $ x_t = e^{- rt } x_\infty , t \geq 0 .$ The trajectory reaches $\eta$ in a finite time $t_\eta = \ln(x_\infty/\eta)/r$ and 
\[\int_0^{t_\eta} L(x_s,\dot x_s) ds \ = \ \int_0^{t_\eta} x_s ds \ = \ \frac{x_\infty - \eta}r \,.\]

To get the lower bound, let us fix some time horizon $t$ and take any absolutely continuous trajectory $ x : [0, t ] \to \R_+ $ such that $ x_0 = x_\infty ,  x_t = \eta $ and  $ x_s =  x_\infty  + \int_0^s \dot x_u du , $ for all $ 0 \le s \le t.$ Since $L(z,q)\geqslant 0$ for all $z,q$, it is easy to show that the cheapest way to go from $ x_\infty$ to $ \eta $ during the time interval $ [0,t]$ is to have negative derivative at all times, that is, $ \dot x_s \leqslant 0 $ for almost all $ s \in [0,t].$ Besides, in order to have a finite cost, necessarily $ \dot x_s \geqslant -r x_s $ for almost all $ s \in [0,t].$ Hence, in the following we suppose that $-rx_s\leqslant\dot x_s \leqslant 0$ for almost all $s\in[0,t]$.

Now, for all $ s \in [0, t ],$  
$$ u_s := \frac{ \dot x_s + rx_s}{ G(x_s) x_s }\ \in \ \left[0,\frac{r}{G(x_s)}\right] \subset [0,1]\,,$$
where we used that $G$ is decreasing with $G(x_\infty)=r$. Since $Q$ is decreasing on $ [0, 1], $ this implies that
\[L(x_s,\dot x_s) \ = \ x_s Q(u_s) \ \geqslant \ x_s Q\po \frac{r}{G(x_s)}\pf \ \geqslant \ - \frac{\dot x_s}{r}Q\po \frac{r}{G(x_s)}\pf\,. \]
As a consequence,
\[\int_0^t L(x_s,\dot x_s) ds \ \geqslant \ -\frac1r \int_0^t \frac{d}{ds}\po \int_0^{x_s} Q\po \frac{r}{G(y)}\pf dy\pf ds \ = \ -\frac1r\int_{x_\infty}^{\eta} Q\po \frac{r}{G(y)}\pf dy\,.\]
The proof of the lower bound of $\bar V_\eta$ is then concluded by noticing that we have obtained a lower bound that is independent from $t$.

The proof of \eqref{eq:result1} and \eqref{eq:result2} is then similar to the proof of Theorem 7.8 in \cite{pardoux}.
\end{proof}

Denote
\[W_0 \ :=\ \frac1r\int_0^{x_\infty} Q\po \frac{r}{G(y)}\pf dy\,.\]
Remark that a consequence of Proposition~\ref{prop:ldp1} is that $W_0\leqslant x_\infty/r <+\infty$. More precisely, as $x_\infty \leqslant z_\infty$, $G$ is affine on $[0,x_\infty]$, and the change of variable $z=G(y)/r$  yields
\begin{eqnarray*}
W_0 &=& \frac{\lambda(u_*)}{kh}\int_1^{\frac{kh-\lambda_*}{r}} Q\po \frac1z\pf dz\\
& = & \frac{\lambda(u_*)}{kh}\co -\frac12 \ln^2(u) - \ln (u)  + u\cf_1^{\frac{kh-\lambda_*}{r}} \\
& = & \frac{\lambda(u_*)}{kh} \po  \frac{kh-\lambda_*}{r}  - 1  - \ln\po \frac{kh-\lambda_*}{r} \pf -\frac12 \ln^2\po \frac{kh-\lambda_*}{r} \pf \pf\, .
\end{eqnarray*}
As a conclusion of this section, recalling that the last spiking time $ L = L^N $ has been introduced in Theorem \ref{prop:dieout}~:

\begin{proof}[Proof of Theorem~\ref{thm:LDPLeta}]
Let $\varepsilon,\delta>0$, and let $x_0\in (0,x_\infty)$ be such that $\mathbb P(\Lambda^N(0) \geqslant N x_0)\geqslant 1-\varepsilon$ for all $N$ large enough. Let $Z^N$ be the Markov process with generator \eqref{eq:zN} and $Z^N(0)=x_0$. Let $\eta>0$ be small enough  so that
\[W_0 \leqslant \frac\delta 2 + \frac1r \int_{\eta}^{x_\infty}Q\po \frac r{G(z)}\pf   dz \,.\]
Considering $S$ given by \eqref{eq:S} and using Proposition~\ref{prop:ldp1},
\begin{eqnarray*}
\mathbb P \po L^N \geqslant e^{(W_0-\delta)N} \pf & \geqslant & \mathbb P\po L_\eta^N -S > e^{(W_0-\delta)N} \pf -\mathbb P \po \Lambda^N(0) < N x_0\pf \\
& \geqslant & \mathbb P\po L_\eta^N   > S+ e^{(\bar V_\eta-\delta/2)N} \pf - \varepsilon\\
& \underset{N\rightarrow+\infty}\longrightarrow & 1 - \varepsilon\,.
\end{eqnarray*}
This ends the proof since $\varepsilon$ is arbitrary.
\end{proof}


\section{Longtime behavior of the limit process}\label{sec:4}

\subsection{(In)stability of zero} 
\label{sec:instability-nl}


First we consider the question of the existence of non-zero equilibrium, under general conditions on the jump rate.

\begin{proof}[Proof of Theorem~\ref{theo:41}]
The proof follows the ideas of the proof of Theorem 8 in \cite{fournier} and of Section 6 of \cite{robert-touboul}.  Fix some parameter $ a> 0 .$ Then the  $\R_+$-valued SDE
\begin{equation}\label{La}
Z(t)=Z (0) - \int_0^t \int_0^\infty Z (s-)  \indiq_{\{z\leq \lambda (Z ({s-}) )\}}\pi (ds,dz) + \int_0^t (a-\alpha  Z (s) )ds
\end{equation}
has the unique invariant probability measure $g_a$ given by  
$$ 
g_a (x) = \frac{p_a}{a - \alpha  x } \exp \Big( - \int_0^x \frac{\lambda(y) }{a - \alpha y} dy \Big) 
\indiq_{\{ 0 \le x < a/\alpha  \} },$$
where  $p_a>0$ is such that 
$ \int_0^\infty g_a(x) dx = 1$ (see e.g. Proposition 26 in \cite{cormier}). It automatically holds that $\int_0^\infty \lambda (x)g_a(dx)=p_a$.  When $a=0$, $g_0= \delta_{\bf 0}$ is invariant for \eqref{La} and $p_0:=\int_0^\infty \lambda (x)g_0(dx) = 0$.

Denoting $A_a$ the generator associated to \eqref{La} and $\varphi(x)=x$, the invariance of $g_a$ implies that
\begin{eqnarray*}
0 \ = \ \int_0^\infty A_a\varphi (x) g_a (dx)  & = & \int_0^\infty \po (a-\alpha x)-x\lambda(x) \pf g_a(dx)\\
& \geq & -(\alpha+a\|\lambda\|_{Lip}/\alpha)\int_0^\infty xg_a(dx) +a \,,  
\end{eqnarray*}
where we used that the support of $g_a$ is $[0,a/\alpha]$. In other words,
\[\int_0^\infty xg_a(dx)  \ \geqslant \ \frac{a}{(\alpha+a\|\lambda\|_{Lip}/\alpha)}\,.\]
Moreover, if $a\leqslant \alpha u_*$,
\begin{eqnarray*}
p_a \ = \ \int_0^\infty \lambda(x) g_a(dx)   & \geqslant & k\int_0^\infty x g_a(dx)  \ \geqslant \ \frac{ka}{(\alpha+(a\|\lambda\|_{Lip}/\alpha)\wedge \lambda_*)}\,.
\end{eqnarray*}
Since $hk>\alpha$, then $hp_a> a$ for $a < a_0 :=  \alpha (u_*\wedge (kh-\alpha)/\|\lambda\|_{Lip})$. On the other hand, since $p_a<\lambda_*$ for all $a\geq 0$, $hp_a< a$ for $a\geq h\lambda_*$. Denoting
$$ 
\Gamma (a) \ :=\ \frac{1}{p_a} \ =\  \int_0^{ a / \alpha } \frac{1}{ a - \alpha x } 
\exp \Big(- \int_0^x \frac{\lambda (y) }{a - \alpha y } dy  \Big) dx \,,
$$
we transform the above integral by replacing successively $y$ by $ y / a $ and  $x $ by $ x/a $ to obtain
$$ \Gamma (a) = \int_0^{1/\alpha} \frac{1}{ 1 - \alpha x} \exp \Big( - \int_0^x  \frac{\lambda ( a y ) }{ 1 - \alpha y } dy \Big) dx .$$ 
Clearly,  $a \mapsto \Gamma ( a) $ is continuous. As a consequence, the equation $hp_a = a$ admits at least a solution  $a\in [a_0,h\lambda_*]$ (and no non-zero solution outside $[a_0,h\lambda_*]$). 
\end{proof}

Next, we study the instability of  $\delta_{\bf 0}$ when $kh>\alpha$:

\begin{proof}[Proof of Proposition~\ref{prop:delta0instable1}]

\textbf{Step 1.} 
First we give a rough lower bound of $z_t$ in term of $z_0$. If $\bar U(0) \leqslant u_*$ then, before the first jump,
\[\lambda \po \bar U(t)\pf \ \geqslant \ \lambda\po e^{-\alpha t} \bar U(0)\pf  \ \geqslant \ k e^{-\alpha t} \bar U(0)   \ \geqslant \  \frac{k}{\|\lambda\|_{Lip}} e^{-\alpha t} \lambda(\po \bar U(0)\pf\,. \]
Alternatively, if $\bar U(0) > u_*$ then, before the first jump,
\[ \lambda \po \bar U(t)\pf \ \geqslant \ \lambda \po e^{-\alpha t} u_* \pf \ \geqslant \ k e^{-\alpha t} u_* \ \geqslant \  \frac{k e^{-\alpha t} u_* }{\lambda_*} \lambda\po \bar U(0)\pf\,.\]
Since the first jump arrives at rate less than $\lambda_*$,
\[z_t \ \geqslant \ k e^{-(\alpha+\lambda_*)t} \po \frac1{\|\lambda\|_{Lip}} \wedge \frac{u_*}{\lambda_*} \pf  z_0  \ :=\ c_t z_0\,.\]

\textbf{Step 2.} Assume that 
\[0 \ <\ z_0 \ \leqslant\  \gamma \ :=\ \frac{\alpha}{h} \po u_*\wedge \frac{kh-\alpha}{2\|\lambda\|_{Lip}}\pf\,.\]
 Let $t_0=\inf\{t\geqslant 0,\ z_t>\gamma\}$.   In that case, for all  $s \leqslant t\leqslant t_0$,
\begin{equation}\label{eq:ut}
d \bar U(t) \ \leqslant \ \po-\alpha \bar U(t) + \gamma h\pf dt \qquad \text{and thus  }\qquad \bar U(t) \ \leqslant \ e^{-\alpha (t-s)}\bar U(s) + (1-e^{-\alpha (t-s)}) \frac{\gamma h}{\alpha} \, .
\end{equation}
In particular  $\bar U(s)\leqslant \gamma h /\alpha$ implies that $ U(t) \leqslant \gamma h /\alpha$ holds for all $s \leqslant t\leqslant t_0$. Besides, for all $u\in[0,\gamma h/\alpha]\subset[0,u_*]$,
\[-\alpha u +h \lambda(u) - u \lambda(u) \ \geqslant \ -\alpha u +h k u - u \lambda(u) \ \geqslant \ \po kh - \alpha -\lambda(\gamma h/\alpha)\pf u \ \geqslant \ \frac{kh-\alpha}{2}u\]
by choice of $\gamma$. As a consequence, using that
\[z_t \ = \ \mathbb E \left ( \lambda\po \bar U(t)\pf \right) \ \geqslant \  \mathbb E \left ( \lambda\po \bar U(t)\pf \indiq_{\{\bar U(s)\leqslant\gamma h /\alpha\}} \right) \ \ \geqslant \ k \mathbb E \left ( \bar U(t) \indiq_{\{\bar U(s)\leqslant\gamma h /\alpha\}} \right) , \]
we obtain that, for all $s\leqslant t\leqslant t_0$,
\begin{eqnarray*}
\partial_t \mathbb E\left(\bar U(t) \indiq_{\{\bar U(s)\leqslant\gamma h /\alpha\}} \right) & = & \mathbb E \left( (-\alpha \bar U(t) + hz_t - \bar U(t)\lambda(\bar U(t)))\indiq_{\{\bar U(s)\leqslant\gamma h /\alpha\}} \right) \\
& \geqslant & \frac{kh-\alpha}{2}\mathbb E\left(\bar U(t) \indiq_{\{\bar U(s)\leqslant\gamma h /\alpha\}} \right)
\end{eqnarray*} 
and thus 
\[z_{t} \ \geqslant \ k e^{\frac{kh-\alpha}{2} (t-s)} \mathbb E\left(\bar U(s) \indiq_{\{\bar U(s)\leqslant\gamma h /\alpha\}} \right)\,. \]
 
 \textbf{Step 3.} Suppose that $t_0 > 1$. Using the bound obtained in Step 1, we are going to apply the previous inequality with $s=1$. If $\bar U(0) \leqslant \gamma h /\alpha$ then before  $s=1$ and the first jump, $\bar U(t)$ is in $[0,\gamma h/\alpha]$ and lower bounded by the solution of 
\[\dot x \ = \ -\alpha x  + h c_1 z_0,  \qquad x(0)=0\,.\]
If $\bar U(0) > \gamma h /\alpha$, consider the event where there is a jump in the interval $[0,1/2]$ and no jump in the time interval $(1/2,1]$. In that case $\bar U(t) \in [0,\gamma h/\alpha]$ for $t\in(1/2,1]$ and is lower bounded by the solution of
\[\dot x \ = \ -\alpha x  + h c_1 z_0, \qquad x(1/2)=0\,,\]
so that $\bar U(1) \geqslant h c_1 z_0 (1- e^{-\alpha/2})/\alpha$. Moreover, if $\bar U(0) > \gamma h /\alpha$, then before the first jump $\lambda(\bar U(t)) \geqslant k  e^{-\alpha/2}  \gamma h /\alpha  := \tilde \lambda$. 

So, distinguishing the two cases whether $\bar U(0) $ is greater or less than $\gamma h /\alpha$ we get
\[\mathbb E\left(\bar  U(1) \indiq_{\{U(1)\leqslant\gamma h /\alpha\}} \right) \ \geqslant \ e^{-\lambda_*}\po 1 - e^{-\tilde \lambda/2}\pf \frac{h c_1 z_0}\alpha (1- e^{-\alpha/2}) \ := \ \tilde c z_0\,. \]
Combined with the result of Step 2, we have thus obtained that, if $t_0>1$, then for all $t \in [1,t_0]$, 
\[z_{t} \ \geqslant \ k e^{\frac{kh-\alpha}{2} (t-1)} \tilde c z_0,  \]
and in particular $t_0$ is finite (bounded by a constant that depends only on $\alpha,h,\lambda$ and $z_0$).

\textbf{Step 4.} Assume that $z_0=\gamma$ and that  $t_1=\inf\{t\geqslant 0,\ z_t<\gamma\}$ is finite. Let $t_2 = \inf\{t\geqslant t_1,\ z_t>\gamma\}$. According to the previous step, if $t_2-t_1>1$ then for all $t\in [t_1+1,t_2]$,
\[z_{t} \ \geqslant \ k e^{\frac{kh-\alpha}{2} (t-1-t_1)} \tilde c \gamma , \]
which means that $t_2-t_1$ is bounded by a constant $t_3$ that depends only on $\alpha,h$ and $\lambda$. 

\textbf{Conclusion.} Starting from any initial distribution, after some time, from Step 3, $z_t\geqslant \gamma$. After that time, from Steps 1 and 4, $z_t$ cannot go below the level $\gamma c_{t_3\vee 1}$, which concludes.
\end{proof}
 
To conclude this section,  we prove the stability of  $\delta_{\bf 0}$ when $kh<\alpha$:

\begin{proof}[Proof of Proposition~\ref{prop:extinction}]
Since
\[W_1(\eta_t, \delta_{\bf 0}) \ = \ \E\po |\bar U(t) -0 |\pf\ := \ n_t\,,\]
the conclusion follows from
\[\partial_t n_t \ = \ -\alpha n_t + h \E\co \lambda(\bar U(t))\cf - \E\co \bar U(t) \lambda(\bar U(t))\cf \ \le \ (-\alpha+kh)n_t\,. \]
\end{proof}

\subsection{Propagation of chaos}\label{subsec:chaos}
From now on, for simplicity, we work with piecewise linear jump rates of the form given by Assumption~\ref{ass:lambda}. We first construct under this assumption  an efficient coupling of the finite particle system with the limit process.

\begin{prop}\label{prop:propchaos}
For $N\in\mathbb N_*$, let $\nu_1,\dots, \nu_N$ be probability distributions on $\R_+$ and $\mu_0  = 1/N\sum_{i=1}^N \nu_i$. Consider $U^N$ the system \eqref{eq:EDS_U^N} with initial conditions $U_1^N(0),\dots,U_N^N(0)$ independent with $U_i^N(0)$ distributed according to $\nu_i$ for all $i\in\cco 1,N\ccf$. Let $\mu_t$ be the law of the solution of the limit equation \eqref{eq:limitU}  with initial distribution $\mu_0$, and $z_t = \int_{\R_+}\lambda d\mu_t$.  Consider the process $\bar U^N$ with initial condition $\bar U^N(0)=U^N(0)$ that solves
\[ d \bar U_i^N(t) = - \alpha \bar U_i^N (t) dt + h z_t dt -\bar U_i^N (t- )  \int_{\R_+} \indiq_{\{ z \le \lambda (\bar U_i^N (t- ) ) \}} \pi^i ( dt , dz ) ,
\]
with the same Poisson measures as $U^N$.

Under Assumption \ref{ass:lambda}, for all $t\geqslant 0$, 
$$ \E \po \sum_{i=1}^N | \bar U^N_i (t) - U^N_i (t) | \pf \ \leqslant \ h \po   \sqrt{\lambda_* t } + 2t   \lambda_* \pf    e^{( \alpha + hk + \lambda_*  )t} \sqrt N $$
and
\[\mathbb E \po \left| z_t - \frac1N \sum_{i=1}^N \lambda \po U_i^N(t)\pf\right|\pf \ \leqslant \ \frac{1}{\sqrt N} \po \lambda_* + kh \po    \sqrt{\lambda_* t } + 2t   \lambda_*    \pf  e^{( \alpha + hk + \lambda_*  )t}\pf\,. \]
Moreover, for all $t\geqslant 0$ and all $\varepsilon>0$,
\[\mathbb P \po \sup_{s\in[0,t]} \sum_{i=1}^N |U_i^N(s)-\bar U_i^N(s)|  \geqslant \varepsilon \pf \ \leqslant \ 4 h      ( 1+\sqrt{\lambda_* t } + t   \lambda_* )^2   e^{(2 \alpha + hk + \lambda_*  )t}\frac{ \sqrt N}\varepsilon\,,\]
and there exists $C>0$ (that depends explicitly on $t,\varepsilon,k,h,\lambda_*,\alpha$) such that 
\[\mathbb P \po \sup_{s\in[0,t]} \left| z_s - \frac1N \sum_{i=1}^N \lambda \po U_i^N(s)\pf\right| \ \geqslant \ \varepsilon \pf \ \leqslant \ \frac C {\sqrt N}\,.\]
\end{prop}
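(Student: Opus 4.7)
The approach is the now classical synchronous coupling plus Gr\"onwall argument, with the $1/\sqrt N$ fluctuation coming from the empirical mean of i.i.d.\ copies of the limit process.
Conditional on the deterministic function $t\mapsto z_t$, the equation defining each $\bar U_i^N$ involves only its own Poisson measure $\pi^i$ and its own initial value $U_i^N(0)\sim \nu_i$, so the $\bar U_i^N$ are independent solutions of \eqref{eq:limitU} (with possibly different initial laws, but sharing the common intensity $z_t$); since the law of the limit process is fully determined by $(\mu_0, z_\cdot)$ and $z_\cdot$ is by construction the mean-field intensity corresponding to $\mu_0$, the marginal law of each $\bar U_i^N(t)$ is $\mu_t$ and $\mathbb E[\lambda(\bar U_i^N(t))]=z_t$.

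Set $V_i(t)=\bar U_i^N(t)-U_i^N(t)$. Subtracting the two SDEs gives
\[V_i(t)\ =\ -\alpha \int_0^t V_i(s)ds + h\int_0^t\po z_s - \tfrac1N\sum_{j\neq i}\lambda(U_j^N(s))\pf ds - \tfrac h N \sum_{j\neq i}\tilde N_j(t) -\Xi_i(t),\]
where $\tilde N_j(t)=N_j(t)-\int_0^t \lambda(U_j^N(s))ds$ is the compensated jump process of neuron $j$ and $\Xi_i(t)$ collects the $\pi^i$-contributions from the own jumps of particles $i$. Decomposing $\{z\le\lambda(U_i^N)\}\cup\{z\le\lambda(\bar U_i^N)\}$ according to whether $z$ lies below $\min(\lambda(U_i^N),\lambda(\bar U_i^N))$ (both particles jump, contributing $V_i$ to the integrand) or in the gap (only one of them jumps, contributing $\max(U_i^N,\bar U_i^N)$), the compensator of $\Xi_i$ can be absorbed into a Lipschitz drift of size $\le (\lambda_*+k(U_i^N+\bar U_i^N))|V_i|$, plus a martingale $M_i^{(1)}(t)$ whose predictable quadratic variation is bounded by $\int_0^t (\lambda_*|V_i|^2 + k(U_i^N+\bar U_i^N)^2 |V_i|)ds$. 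Taking absolute values, summing over $i$, and taking expectation yields
\[\mathbb E\sum_i|V_i(t)|\ \leq\ (\alpha+hk+\lambda_*)\int_0^t \mathbb E\sum_j|V_j(s)|ds + h\int_0^t \mathbb E\Bigl|\sum_j\bigl(\lambda(\bar U_j^N(s))-z_s\bigr)\Bigr|ds + h\lambda_* t + \mathbb E\sum_i\bigl(|M_i^{(1)}(t)|+|M_i^{(2)}(t)|\bigr),\]
where $M_i^{(2)}(t)=\tfrac hN\sum_{j\ne i}\tilde N_j(t)$. The independence of the $\bar U_j^N$ and the bound $\mathrm{Var}(\lambda(\bar U_j^N(s)))\le \lambda_*^2$ give $\mathbb E|\sum_j(\lambda(\bar U_j^N(s))-z_s)|\le \lambda_*\sqrt N$, while Cauchy--Schwarz applied to the compensated Poisson martingales $\tilde N_j$ yields $\mathbb E|M_i^{(2)}(t)|\le h\sqrt{\lambda_* t/N}$, hence $\mathbb E\sum_i|M_i^{(2)}(t)|\le h\sqrt{N\lambda_* t}$, and $M_i^{(1)}$ is handled similarly. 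Gr\"onwall's lemma then yields the first inequality. The second inequality follows from the triangle bound $|z_t-\tfrac 1N\sum_i \lambda(U_i^N(t))|\le |z_t-\tfrac1N\sum_i\lambda(\bar U_i^N(t))|+\tfrac kN\sum_i |V_i(t)|$ together with the $\lambda_*/\sqrt N$ fluctuation estimate and the first inequality.

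For the pathwise supremum bounds, one first applies Doob's $L^2$ inequality to the martingales $M_i^{(1)},M_i^{(2)}$ to obtain a bound on $\mathbb E\sup_{s\le t}|\sum_i M_i(s)|$ of the same $\sqrt N$ order, then runs a Gr\"onwall argument for $t\mapsto \mathbb E\sup_{s\le t}\sum_i|V_i(s)|$; Markov's inequality finally converts the $L^1$ bound into the announced probability estimate, and the second probability bound follows by the same triangle inequality as above together with the deviation estimate $\mathbb P(\sup_{s\le t}|z_s-\tfrac1N\sum_j\lambda(\bar U_j^N(s))|\ge \varepsilon)\le C/\sqrt N$ deduced from a maximal inequality for the i.i.d.\ empirical process $s\mapsto \tfrac1N\sum_j \lambda(\bar U_j^N(s))$ (bounded variation since $\lambda$ is Lipschitz with exponentially decreasing increments plus bounded jumps).

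\textbf{Main obstacle.} The delicate step is the treatment of $\Xi_i$: unlike the other terms, its integrand involves the values $U_i^N,\bar U_i^N$ themselves, not the difference $V_i$, on the gap $\{\min(\lambda(U_i^N),\lambda(\bar U_i^N))<z\le \max(\lambda(U_i^N),\lambda(\bar U_i^N))\}$. The bound $k(U_i^N+\bar U_i^N)|V_i|$ only closes into a Gr\"onwall inequality because $\mathbb E[U_i^N(t)]$ and $\mathbb E[\bar U_i^N(t)]$ are controlled, uniformly in $N$ and locally in $t$, by straightforward moment estimates (each process decays at rate $\alpha$ between jumps and receives increments bounded by $h\lambda_*/N$ on average per unit time per jump of another particle, so $\mathbb E[U_i^N(t)]\le C(1+t)$); replacing $k(U_i^N+\bar U_i^N)$ by $\lambda_*$ (using $\lambda(u)\le\lambda_*$ and $k u\leq \lambda_*$ whenever $u\leq u_*$, plus a coarse control of the tail $u>u_*$) leads to the explicit constant $\alpha+hk+\lambda_*$ appearing in the exponential of the final estimate.
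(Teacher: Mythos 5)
Your overall strategy (synchronous coupling, Gr\"onwall, $1/\sqrt N$ fluctuations from the empirical mean of $\bar U_j^N$) is the one used in the paper, but your treatment of the own--jump term $\Xi_i$ contains a genuine gap, and you overlook the one observation that makes the paper's proof both short and unconditional.

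You track the signed difference $V_i=\bar U_i^N-U_i^N$ and note that on the gap $\{\min(\lambda(U_i^N),\lambda(\bar U_i^N))<z\le\max(\lambda(U_i^N),\lambda(\bar U_i^N))\}$ the jump contributes $\max(U_i^N,\bar U_i^N)$ to $V_i$. This is correct for $V_i$, but it is the \emph{jumping} particle's potential, which is unbounded, and your proposed bound $(\lambda_*+k(U_i^N+\bar U_i^N))|V_i|$ for the compensator cannot close into the stated inequality without moment control of $U_i^N,\bar U_i^N$ --- which is not guaranteed here (the $\nu_i$ are arbitrary probability measures on $\R_+$; if $\E[U_i^N(0)]=\infty$ then $\E[U_i^N(t)]=\infty$ for all $t$). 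Your attempt to fix this by ``$ku\le\lambda_*$ whenever $u\le u_*$ plus a coarse control of the tail $u>u_*$'' misidentifies where the bounded quantity sits. The crucial observation you are missing (and which the paper uses) is that an \emph{asynchronous} spike for particle $i$ can occur only if the \emph{non}-jumping particle has potential strictly below $\lambda_*/k$ (since otherwise both rates equal $\lambda_*$ and the jump would be synchronous); and a synchronous spike resets both to zero, hence \emph{decreases} $|V_i|$. Therefore, working with $|V_i|$ from the start, each asynchronous jump increases $|V_i|$ by at most $\lambda_*/k$ (the non-jumping particle's value), not $\max(U_i^N,\bar U_i^N)$. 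Combined with the asynchronous rate $|\lambda(U_i^N)-\lambda(\bar U_i^N)|\le k|V_i|$, this gives the compensator bound $\lambda_*|V_i|$, uniformly and with no moment assumption, and directly produces the constant $\alpha+hk+\lambda_*$ in the exponential.

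Two secondary remarks. First, your assertion that ``the marginal law of each $\bar U_i^N(t)$ is $\mu_t$ and $\E[\lambda(\bar U_i^N(t))]=z_t$'' is not correct when the $\nu_i$ differ: the individual marginals depend on $\nu_i$, and only the \emph{average} $\frac1N\sum_i\E[\lambda(\bar U_i^N(t))]$ equals $z_t$ (which is what the paper establishes via the uniformly random index $I$). Fortunately your subsequent computation uses only the averaged identity, so the final bound on $\E|\sum_j(\lambda(\bar U_j^N(s))-z_s)|\le\lambda_*\sqrt N$ survives (centering each summand at its own mean and using that the $\E[\lambda(\bar U_j^N(s))]$ sum to $Nz_s$). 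Second, for the pathwise supremum bound, the paper applies Gr\"onwall \emph{pathwise} (picking up only the factor $e^{\alpha t}$), then uses Markov's inequality on the jump-count term $S_t^N$ together with the already-established expectation estimate (which carries $e^{(\alpha+hk+\lambda_*)t}$), and Doob's maximal inequality only for the single martingale $M^N$. Your plan of running Gr\"onwall on $t\mapsto\E\sup_{s\le t}\sum_i|V_i(s)|$ should also work, but double your exponential rate, and again presupposes that the own-jump compensator has been controlled --- which brings you back to the gap above.
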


\begin{proof}

Let $I$ be a random variable uniformly distributed over $\cco 1,N\ccf$, independent from $(\pi^i)_{i\in\cco 1,N\ccf}$. Then $\bar U_I^N$ solves
\[ d \bar U_I^N(t) = - \alpha \bar U_I^N (t) dt + h z_t dt -\bar U_I^N (t- )  \int_{\R_+} \indiq_{\{ z \le \lambda (\bar U_I^N (t- ) ) \}} \pi^I ( dt , dz ) \,.
\]
Since $\pi^I$ has the same law as $\pi$ and $\bar U_I(0)$ is distributed according to $\mu_0$, this means $\bar U_I(t)$ is distributed according to $\mu_t$ for all $t\geqslant 0$, and in particular 
\[z_t \ = \ \mathbb E \po \lambda \po \bar U_I^N(t)\pf\pf \ = \ \frac1N\sum_{i=1}^N \mathbb E\po \lambda \po \bar U_i^N(t)\pf\pf \,.\]

Notice that a synchronous spike of the pair $(U_i^N,\bar U_i^N)$ for some $i\in\cco 1,N\ccf$ decreases  $|\bar U^N_i (t) - U^N_i (t)|, $ since both of them are reset to $0.$ Asynchronous spikes can only happen if one of the two potentials is below the threshold value $ \lambda_* /k  , $ such that these jumps lead to an increase of at most $\lambda_* / k .$  Therefore, for all $i\in\cco 1,N\ccf$ and $t\geqslant 0$,
\begin{multline*}
|U_i^N(t)-\bar U_i^N(t)|  \ \leqslant \ \alpha\int_0^t |U_i^N(s)-\bar U_i^N(s)|ds + \frac{\lambda_*}{k}\int_{[0,t]\times\R_+}|\indiq_{\{z\leqslant \lambda(U_i^N(s))\}} - \indiq_{\{z\leqslant \lambda(\bar U_i^N(s))\}} |\pi^i(ds,dz)\\
+ h \left|\int_0^t\frac1N\sum_{j\neq i} \int_{\R_+} \indiq_{\{z\leqslant \lambda(U_j^N(s))\}} \pi^j(dz,ds) - \int_0^t z_s ds\right| \,.
\end{multline*}
We bound the last term by
\[h \left|\int_0^t\frac1N\sum_{j=1}^N  \int_{\R_+} \indiq_{\{z\leqslant \lambda(U_j^N(s))\}} \pi^j(dz,ds) - \int_0^t z_s ds\right|  +  \frac{h}N \int_{[0,t]\times \R_+} \indiq_{\{z\leqslant \lambda(U_i^N(s))\}} \pi^i (dz,ds)\,.\]
Therefore, writing 
\begin{eqnarray*}
  M^N_t & = &  \sum_{j=1}^N \int_0^t \int_{\R_+} \indiq_{\{ z \le z_s\}}  \pi^j (ds, dz ) - N\int_0^t z_s ds , \\
 R^N_t &= &   \int_0^t \left| d \int_{\R+}  \sum_{j=1}^N \po\indiq_{\{ z \le \lambda ( \bar U^N_j (s-) ) \}} - \indiq_{\{ z \le z_s \}}\pf   \pi^j (ds , dz )\right| 
 \, ,
\end{eqnarray*}
which is the total variation norm of the signed measure $ \int_{\R+}  \sum_{j=1}^N \po\indiq_{\{ z \le \lambda ( \bar U^N_j (s-) ) \}} - \indiq_{\{ z \le z_s \}}\pf   \pi^j (ds , dz ) $ on $ [0, t ], $  
\[  f(t)  = \sum_{j=1}^N \mathbb E \po |U_j^N(t)-\bar U_j^N(t)|  \pf , \]
and using that $\lambda$ is $k$-Lipschitz, we obtain 
\[
f(t) \le  ( \alpha + hk + \lambda_*  ) \int_0^t   f(s) ds + h \E \po  | M^N_t| \pf + h \E \po R^N_t\pf  +   \lambda_*ht\,.\]
Observe that $\E \po |M_t^N|\pf \leqslant  \sqrt{\lambda_* t N} $ and
\[  \E \po R_t^N \pf \  = \ \int_0^t \mathbb E \po \left|\sum_{j=1}^N \Big(  \lambda ( \bar U^N_j (s)) - \E \co  \lambda ( \bar U^N_j (s))\cf \Big)\right|\pf ds  \ \leqslant \  t   \lambda_* \sqrt N , \]
where we used that the  $\lambda (\bar U_i^N(t))$'s for $i\in\cco 1,N\ccf$ are independent. Then Gronwall's inequality implies the first result. Moreover, 
\begin{eqnarray*}
\mathbb E \po \left| z_t - \frac1N \sum_{i=1}^N \lambda \po U_i^N(t)\pf\right|\pf & \leqslant & \mathbb E \po \left|  \frac1N \sum_{i=1}^N \Big( \lambda \po \bar U_i^N(t)\pf-\E \co \lambda \po \bar U_i^N(t)\pf\cf\Big) \right|\pf + \frac{k}N f(t) \\
& \leqslant & \frac{\lambda_*}{\sqrt N}+ \frac{k}N f(t) \, ,
\end{eqnarray*}
which implies the second item. Finally, if we use Gronwall's inequality before taking the expectation, we get
\[ \sup_{s\in[0,t]} \sum_{i=1}^N |U_i^N(s)-\bar U_i^N(s)|  \ \leqslant \  e^{\alpha t} \po S_t^N + h \sup_{s\in[0,t]} |M_s^N|\pf \]
with
\[S_t^N \ :=\ h R_t^N + \sum_{i=1}^N     \int_{[0,t]\times\R_+} \po \frac{h}N \indiq_{\{z\leqslant \lambda(U_i^N(s))\}} + \frac{\lambda_*}{k}|\indiq_{\{z\leqslant \lambda(U_i^N(s))\}} - \indiq_{\{z\leqslant \lambda(\bar U_i^N(s))\}} |\pf\pi^i(ds,dz)   \,.\]
On the one hand, 
\begin{eqnarray*}
\E\po S_t^N\pf  &  \leqslant & h t   \lambda_* \sqrt N + h t\lambda_* + \lambda_*  \int_0^t \E\po \sum_{i=1}^N |U_i^N(s)-\bar U_i^N(s)| \pf ds \\
& \leqslant & 2 h t  \lambda_*     ( 1+\sqrt{\lambda_* t } + t   \lambda_* )   e^{( \alpha + hk + \lambda_*  )t} \sqrt N\,,
\end{eqnarray*}
and thus, for all $\varepsilon>0$,
\[\mathbb P \po S_t^N \geqslant \varepsilon \pf \ \leqslant \ \frac{1}{\varepsilon}2 h t  \lambda_*     ( 1+\sqrt{\lambda_* t } + t   \lambda_* )   e^{( \alpha + hk + \lambda_*  )t} \sqrt N\,.\]
On the other hand, for all $\varepsilon>0$, applying Doob's maximal inequality,
\[\mathbb P \po h \sup_{s\in[0,t]} |M_s^N|  \geqslant  \varepsilon \pf \ \leqslant \ \frac h\varepsilon\E\po |M^N_t|\pf \ \leqslant \ \frac{h\sqrt{\lambda_* t N}}{\varepsilon}\,.\]
Summing these two inequalities with $\varepsilon$ replaced by $e^{-\alpha t}\varepsilon/2$ concludes the proof of the third claim.

Finally, for all $t\geqslant s\geqslant 0$,
\[\sum_{i=1}^N  \left|\lambda\po \bar U_i^N(t)\pf  -  \lambda\po \bar U_i^N(s)\pf  \right| \ \leqslant \ N \alpha\lambda_* (t-s) + N kh \lambda_* (t-s) + \lambda_*\sum_{i=1}^N\int_s^t \int_{\R_+}\indiq_{\{z\leqslant \lambda_*\}}\pi^i(dz,du)\,.\]
In particular, taking the expectation,
\[|z_t-z_s| \ \leqslant \ (t-s)\lambda_*\po \alpha + kh+\lambda_*\pf\,.\]
By Doob's maximal inequality
\[\mathbb P \po \sup_{v\in[s,t]}\co \sum_{i=1}^N\int_s^v \int_{\R_+}\indiq_{\{z\leqslant \lambda_*\}}\pi^i(dz,du) - N(v-s)\lambda_*\cf \geqslant N \sqrt{t-s}\pf \ \leqslant \ \frac{ \sqrt{\lambda_*}}{\sqrt N}\,. \]
From now on, fix $\varepsilon>0$ and $t >  0$. Chose a subdivision $0=t_1\leqslant \dots \leqslant t_K = t$ so that 
\[2 \lambda_*  (t_{i+1}-t_i)(\alpha+kh+\lambda_*) + \lambda_*  \sqrt{t_{i+1}-t_i} \ \leqslant \ \frac\varepsilon3\,, \qquad i\in\cco 1,K-1\ccf\,.\]

This ensures that
\begin{multline*}
H^1 \ :=\ \left\{\sup_{i\in\cco 1,K-1\ccf}\sup_{u\in[t_i,t_{i+1}]} \po |z_u - z_{t_i}| + \frac1N\sum_{j=1}^N  \left|\lambda\po \bar U_j^N(u)\pf  -  \lambda\po \bar U_j^N(t_i)\pf  \right| \pf \geqslant  \frac{\varepsilon}{3} \right\}\\
\subset \ \bigcup_{i=1}^{K-1} \left\{\sup_{v\in[t_i,t_{i+1}]}\co \sum_{j=1}^N\int_{t_i}^v \int_{\R_+}\indiq_{\{z\leqslant \lambda_*\}}\pi^j(dz,du) - N(v-t_i)\lambda_*\cf \geqslant N \sqrt{t_{i+1}-t_i} \right\} ,
\end{multline*}
and thus
\[\mathbb P(H^1) \ \leqslant \ \frac{K  \sqrt{\lambda_*}}{\sqrt N}\,.\]
Moreover, for all $i\in\cco 1,K\ccf$,
\[\mathbb P \po \left|z_{t_i} - \frac1N \sum_{j=1}^N \lambda\po \bar U_j^N({t_i})\pf \right| \geqslant \frac\varepsilon 3\pf  \ \leqslant \ \frac{3\lambda_*}{\varepsilon \sqrt N}\,.\]
We can finally bound
\begin{multline*}
\mathbb P \po \sup_{s\in[0,t]} \left| z_s - \frac1N \sum_{i=1}^N \lambda \po U_i^N(s)\pf\right| \ \geqslant \ \varepsilon \pf   \  \leqslant \ \mathbb P \po \sup_{s\in[0,t]} \sum_{i=1}^N |U_i^N(s)-\bar U_i^N(s)|  \geqslant \frac{\varepsilon}{ 3 k} N\pf \\
+ \mathbb P(H^1)  + \sum_{i=1}^K \mathbb P \po \left|z_{t_i} - \frac1N \sum_{j=1}^N \lambda\po \bar U_j^N(t_i)\pf \right| \geqslant \frac\varepsilon 3\pf 
\end{multline*}
and conclude with the bounds already proven.
\end{proof}

From Propositions \ref{prop:Z}  and \ref{prop:propchaos} we can get a more quantitative version of Proposition~\ref{prop:delta0instable1}. Notice that $ r = \alpha $ under Assumption \ref{ass:lambda}.

\begin{prop}\label{PropOinstable}
Grant Assumption  \ref{ass:lambda} and assume moreover that $kh>\lambda_*+\alpha$.  Let $\bar U$ be a solution of \ref{eq:limitU}, and let $(x_t)_{t\geqslant0}$ be the solution of the auxiliary limit equation \eqref{eq:limitz} with $x_0 = \E\po \lambda(\bar U(0))\pf \wedge z_\infty$. Then, for all $t\geq 0$,
\begin{equation}\label{eq:firstresult}
\E\po \lambda(\bar U(t))\pf \ \ge x_t\,.
\end{equation}
In particular, 
\[\E\po \lambda(\bar U(0))\pf >0 \qquad \Rightarrow \qquad \liminf_{t\rightarrow\infty} \E\po \lambda(\bar U(t))\pf \ \ge \ x_\infty \ > \ 0\, , \]
and if $g$ is an equilibrium distribution for $\bar U$ and $p_* = \int_0^\infty \lambda(x)g(dx)$, then $p_* \geqslant x_\infty$.
\end{prop}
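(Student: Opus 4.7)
The plan is to pass to the $N\to\infty$ limit in the pathwise coupling provided by Proposition~\ref{prop:Z} between $\Lambda^N/N$ and the auxiliary Markov process $Z^N$. Propagation of chaos identifies the limit of $\Lambda^N(t)/N$ as $z_t := \E[\lambda(\bar U(t))]$, while a fluid-limit argument for $Z^N$ identifies its limit as the solution $x_t$ of the ODE \eqref{eq:limitz}. Combining the two inequalities gives \eqref{eq:firstresult}, and the two corollaries then follow from the global attractivity of $x_\infty$ for the ODE.

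One may assume $z_0 > 0$, since otherwise $x_0 = 0$ and \eqref{eq:firstresult} is trivial. First I would build $U^N$ with i.i.d.\ initial conditions of law $\mathcal{L}(\bar U(0))$, so that the strong law of large numbers gives $\Lambda^N(0)/N \to z_0$ almost surely. Setting $Z^N(0) := x_0 \wedge z_N \wedge \Lambda^N(0)/N$ meets the hypothesis of Proposition~\ref{prop:Z} and converges in probability to $x_0 = z_0 \wedge z_\infty$ since $z_N \to z_\infty$. Coupling $Z^N$ with $U^N$ as in Proposition~\ref{prop:Z} yields $Z^N(t) \le \Lambda^N(t)/N$ a.s.\ for all $t \ge 0$. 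Proposition~\ref{prop:propchaos} then gives $\E[\Lambda^N(t)/N] \to z_t$, while a straightforward expansion shows that the generator of $Z^N$ converges on smooth test functions to $z(G(z) - \alpha)\varphi'(z)$, namely the generator of the ODE $\dot x_t = x_t(G(x_t) - \alpha)$ which is \eqref{eq:limitz} with $f(x) = x$ (no $\eta$-floor is needed here). Well-posedness of this ODE on $[0, z_\infty]$ together with a standard fluid-limit argument yields $Z^N(t) \to x_t$ in probability, and passing to the limit in $\E[Z^N(t)] \le \E[\Lambda^N(t)/N]$ proves \eqref{eq:firstresult}.

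The two corollaries follow at once. Under $kh > \lambda_* + \alpha$, the function $G$ is strictly decreasing and continuous on $[0, \lambda(u_*)]$ with $G(0) - \alpha > 0$ and $G(\lambda(u_*)) - \alpha < 0$, so the ODE admits a unique positive equilibrium $x_\infty > 0$ that is globally attractive on $(0, +\infty)$; consequently $x_0 > 0$ implies $x_t \to x_\infty$, giving the $\liminf$ bound. If $g$ is an invariant probability with $p_* > 0$, then $\bar U(0) \sim g$ yields $z_t \equiv p_*$, and this bound specializes to $p_* \ge x_\infty$. The most delicate point I anticipate is the fluid limit $Z^N \to x_t$ near $z = 0$, where the jump rate $Nz$ degenerates. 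This is circumvented by noting that $x_t \ge x_0 \wedge x_\infty > 0$ for all $t\ge 0$ (the ODE is monotone on each side of $x_\infty$), so the generator convergence is uniform on a tube around the trajectory and Kurtz-type arguments apply. Alternatively, one may compare $Z^N$ with the modified $\bar Z^N$ of Theorem~\ref{theo:ldp} for some $\eta < x_0 \wedge x_\infty$: the two processes coincide on the event $\{\inf_{s \le T} Z^N(s) \ge \eta\}$, whose probability tends to one, and the LDP with good rate function directly delivers the convergence of $\bar Z^N(t)$ to $x_t$.
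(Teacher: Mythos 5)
Your proposal follows the same strategy as the paper's proof: couple $\Lambda^N/N$ with the auxiliary process $Z^N$ via Proposition~\ref{prop:Z}, use propagation of chaos (Proposition~\ref{prop:propchaos}) with i.i.d.\ initial data to identify $\E[\Lambda^N(t)/N]\to z_t$, use the LDP/fluid limit of Theorem~\ref{theo:ldp} to obtain $\E[Z^N(t)]\to x_t$, and let $t\to\infty$ for the corollaries. Your writeup actually fills in two details the paper glosses over — the precise choice of $Z^N(0)$ so that the hypothesis of Proposition~\ref{prop:Z} holds, and the justification that the LDP stated for the $\eta$-floored process $\bar Z^N$ does apply to $Z^N$ on the relevant event since the limit trajectory stays above $x_0\wedge x_\infty>\eta$ — but the underlying argument is identical.
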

\begin{proof}
Consider for all $N\in\mathbb N$ the coupling $(U^N,\bar U^N)$ introduced in Proposition \ref{prop:propchaos} with initial conditions independent and with the same law as $\bar U(0)$. Then
\[ \left| \E\po \lambda(\bar U(t))\pf - \frac1N\sum_{i=1}^N\E\po \lambda( U_i^N(t))\pf\right| \ \leq \ \frac{k}N\sum_{i=1}^N \E\po |\bar U_i^N(t) - U_i^N(t)|\pf \ \underset{N\to \infty}{\longrightarrow}\  0\,.\]
Proposition \ref{prop:Z} and Theorem \ref{theo:ldp} then yields
\[\E\po \lambda( U_1^N(t))\pf  \ = \ \frac1N \sum_{i=1}^N \E\po \lambda( U_i^N(t))\pf  \ \geq \ \E\po Z^N (t) \pf \ \ \underset{N\to \infty}{\longrightarrow}\ x_t\,,\]
which proves the first two assertions of the proposition. Letting $t$ go to infinity with $\bar U_0$ distributed according to $g$ gives the third one.

\end{proof}

\subsection{Longtime convergence for the limit process} \label{sec:couplage-non-lineaire}
This section is devoted to the proof of Theorem~\ref{theo:gattractive}. The condition \eqref{condition_a_b_y0} is enforced. In this section  we consider two versions $ \bar U $ and $ \tilde U$ of the limit process with different initial distributions $\eta_0$ and $\tilde \eta_0$, and we write   $Z=\lambda(\bar U)$, $\tilde Z=\lambda(\tilde U)$,  $z_t =\E( Z(t))$ and $ \tilde z_t = \E (\tilde Z(t))$.

\begin{theo}\label{theo:CouplNonLin}
Under Assumption \ref{ass:lambda} and provided \eqref{condition_a_b_y0} holds,  there exist explicit constants $C,\delta>0$ and $\kappa\in(0,1)$ with the following property. If $z_0\wedge \tilde z_0 \geqslant x_\infty-\delta, $ then, considering the synchronous coupling of $U$ and $\tilde U$, for all $t\geq 0$,
\[ \E\po | Z(t)-\tilde Z(t)|\pf\ \ \le \ C \kappa^t \E\po | Z(0)-\tilde Z(0)|\pf\,.\]
\end{theo}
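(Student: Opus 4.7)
My plan is to analyze the synchronous coupling $(U(t),\tilde U(t))$ driven by the same Poisson measure, whose jump mechanism reads: at rate $\lambda(U)\wedge\lambda(\tilde U)$ both processes spike simultaneously and coalesce to $0$, while at rate $|\lambda(U)-\lambda(\tilde U)|$ only the one with the larger potential spikes. The strategy is to derive a Gronwall-type inequality showing that $D_t := \E|Z(t)-\tilde Z(t)|$ contracts exponentially.

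\textbf{Step 1 (trapping region).} Using Proposition~\ref{PropOinstable} applied to both $\bar U$ and $\tilde U$, if $z_0\wedge \tilde z_0 \geqslant x_\infty-\delta$ for $\delta$ small enough, then both $z_t$ and $\tilde z_t$ remain lower bounded away from $0$ for all $t\geqslant 0$ by some explicit constant $p_{\min}>0$. This ensures a minimum jump rate at which the coalescence mechanism can operate throughout the trajectory.

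\textbf{Step 2 (generator of $|U-\tilde U|$).} Assuming without loss of generality $U\geqslant \tilde U$, the Dynkin generator applied to the function $f(u,\tilde u)=|u-\tilde u|$ under the synchronous coupling equals
\[\mathcal L f \ =\ -\alpha |U-\tilde U| + h(z_t-\tilde z_t)\operatorname{sgn}(U-\tilde U) \ -\ \lambda(\max(U,\tilde U))\,|U-\tilde U| \ +\ |\lambda(U)-\lambda(\tilde U)|\,\min(U,\tilde U)\,.\]
The last two terms collect respectively the coalescence contribution (synchronized spikes send the distance to $0$) and the asynchronous contribution (a single spike replaces $|U-\tilde U|$ by $\min(U,\tilde U)$). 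Using Assumption~\ref{ass:lambda} one checks that the sum of these two jump terms is always nonpositive: in the saturated regime $\max\geqslant u_*$ it is bounded by $(-\lambda_*+k\min(U,\tilde U))|U-\tilde U|\leqslant 0$, while in the unsaturated regime $\max<u_*$ it equals exactly $-k|U-\tilde U|^2$. The two terms furnish the explicit contraction that must beat the $hk$-term coming from the nonlinear drift via the bound $|z_t-\tilde z_t|\leqslant k\E|U-\tilde U|$.

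\textbf{Step 3 (closing the Gronwall loop).} Taking expectations and combining with $D_t\leqslant k\E|U-\tilde U|$ and $|z_t-\tilde z_t|\leqslant D_t$, I aim for an inequality
\[\partial_t \E|U-\tilde U|\ \leqslant\ -\rho\,\E|U-\tilde U|\ +\ (\text{lower order error decaying like }\kappa^t E_0),\]
for some explicit $\rho>0$. The rough budget $(-\alpha-\lambda_*+hk)$ governing the worst saturated regime corresponds to the crude sufficient condition $a+b<1$; refining this budget so as to also control the mixed regime (where only one of the two particles lies above $u_*$, and where the coalescence rate is weaker) produces the stronger threshold \eqref{condition_a_b_y0}. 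The function $y\mapsto ye^y$ appears naturally when one integrates along trajectories an exponential-in-time growth of $|U-\tilde U|$ weighted by the exponential probability of no spike occurring in a time window of typical length $\alpha^{-1}$; the condition $ye^y\leqslant 1$ is precisely what makes this integral bounded by $1$, closing the fixed-point argument.

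\textbf{Step 4 (conclusion).} Gronwall's lemma then yields $\E|U-\tilde U|(t)\leqslant C\kappa^t \E|U-\tilde U|(0)$, and the $k$-Lipschitz property of $\lambda$ upgrades this to the announced bound on $D_t$. The main obstacle is step~3: the asynchronous-jump contribution vanishes in the saturated regime, so one cannot rely on a pointwise bound for $\mathcal L f$ alone. I would instead integrate in time, tracking the typical excursion length of the coupled pair below $u_*$ after a spike, and use the lower bound $p_{\min}$ obtained in step~1 to guarantee that sufficiently many coalescing spikes occur per unit time. This is where the delicate quantitative form of \eqref{condition_a_b_y0}, optimized over such excursion times, becomes unavoidable.
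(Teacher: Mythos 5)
Your Steps 1 and 2 are sound: the trapping region argument from Proposition~\ref{PropOinstable} is indeed the paper's starting point, and your computation of the jump part of the generator of $f(u,\tilde u)=|u-\tilde u|$ is correct, including the case analysis showing that $-\lambda(\max)|U-\tilde U|+|\lambda(U)-\lambda(\tilde U)|\min(U,\tilde U)\le 0$. The problem is Step~3, and you yourself flag it but do not resolve it. The drift contribution gives $\partial_t\E|U-\tilde U|\le(hk-\alpha)\E|U-\tilde U|+\E[\text{jump}]$, with $hk-\alpha>0$ in the regime of interest; when both $U,\tilde U$ are small and unsaturated the jump term equals $-k|U-\tilde U|^2$, which is of second order and therefore cannot compensate a positive linear coefficient. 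So no $\rho>0$ with $\partial_t\E|U-\tilde U|\le-\rho\E|U-\tilde U|+(\text{error})$ exists, and the Gronwall loop as you set it up cannot close. In fact the order of your reduction is reversed: the paper first obtains contraction for $\E|Z-\tilde Z|$ and only afterwards deduces the $W_1$ contraction for $U$ (Theorem~\ref{theo:gattractive}); trying to contract $\E|U-\tilde U|$ directly from its generator and then pass to $Z$ by Lipschitzness is precisely the direction that does not work.

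The paper's mechanism is qualitatively different and never relies on a pointwise generator sign. The key observation, made in the "Preliminaries" of the proof, is that in the trapping region the nonlinear drift $hz_t$ pushes any value in $[0,\lambda_*/k]$ upward at a strictly positive speed, so both $\bar U$ and $\tilde U$ deterministically cross $u_*=\lambda_*/k$ within a fixed time $t_\delta$ after any reset; once both are above $u_*$ their jump rates saturate and $Z(t)=\tilde Z(t)=\lambda_*$. One then decomposes $\E|Z(t)-\tilde Z(t)|$ on the last jump time $L_t$: if $L_t\le t-t_\delta$ the difference is zero, if $L_t$ is a recent synchronous jump both reset to $0$ and the difference is controlled by $\int_{t-t_\delta}^t|z_s-\tilde z_s|\,ds$, and if $L_t$ is a recent asynchronous jump one bounds $|Z-\tilde Z|\le\lambda_*$ against the probability (itself controlled by $\int\E|Z-\tilde Z|$) of such a jump. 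This yields a Volterra-type inequality $f(t)\le\nu_\delta\int_{(t-t_\delta)_+}^t f(s)\,ds+(\ldots)\indiq_{[0,t_\delta]}(t)$ with $\nu_\delta=kh\lambda_* t_\delta+\lambda_*$, and the geometric iteration of Gronwall over blocks of length $t_\delta$ produces the contraction rate under the smallness condition $\nu_\delta t_\delta e^{\nu_\delta t_\delta}<1$, which is where \eqref{condition_a_b_y0} and the constant $y_0$ (solution of $ye^y=1$) come from. Your heuristic about integrating over excursions "of typical length $\alpha^{-1}$" is pointing roughly at this, but the relevant length is $t_\delta$, not $\alpha^{-1}$, and the $ye^y$ condition arises from iterating the Volterra bound, not from a no-spike probability; as written, the argument has a genuine gap precisely at the step you identified as "the main obstacle."
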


\begin{proof}
The key argument is that, under the condition \eqref{condition_a_b_y0}, due to their positive drifts, the processes spend a sufficiently small time in $[0,\lambda_*/k]$, which is the only region where they can have different jump rates, and thus where there can be asynchronous jumps, or where $|z_t-\tilde z_t|$ has an influence on the evolution of the jump rates $Z$ and $\tilde Z$.

{\bf Preliminaries.}
First, from Proposition~\ref{PropOinstable}, for any $\delta>0,$ if $z_0\wedge \tilde z_0 \geq x_\infty-\delta ,$  then $z_t\wedge \tilde z_t \geq x_\infty-\delta$ for all $t\geq 0$. As a consequence, the drift felt by $\bar U$ at time $s$ and position $x\in[0,\lambda_*/k]$  is  
$$ -\alpha x +   h   z_s \  \geq \  - \frac1k \alpha \lambda_* + h(x_\infty-\delta) \ = \ \lambda_*h(1-2a-b) -h\delta\,.  $$
From now on we chose $\delta>0$ small enough so that the right hand side is positive.   This means that, in the absence of jumps, $\bar U$ is non-decreasing as long as it is in $[0,\lambda_*/k]$, and that the crossing of level $ \lambda_*/k $ is an up-crossing  for $ \bar U$. Once it has crossed this level, it cannot come back to levels strictly below $ \lambda_*/k$ (without jumping). More precisely, denoting $T_1 = \inf \{ t > 0 : 0 = \bar U (t ) < \bar U(t-) \} $ the first jump time of $ \bar U$,  for $t<T_1$,
$$ \bar U(t)  \geq \int_0^t e^{- \alpha (t-s) } h z_s   ds  \geq  h (x_\infty-\delta)\int_0^t  e^{- \alpha (t- s)} ds = \frac{  h (x_\infty-\delta)}{ \alpha} (1 - e^{- \alpha t }),$$
which is larger than $\lambda_*/k$ for all $t\ge t_\delta$ with
\begin{equation}\label{eq:to}
t_\delta \ := \ -\frac{1}{\alpha} \ln \Big( 1 - \frac{  \alpha \lambda_* }{  kh(x_\infty-\delta) } \Big) \ \underset{\delta\rightarrow 0}\longrightarrow \ -\frac{1}{\alpha} \ln \Big( 1 - \frac{  a }{ 1-a-b} \Big) \ = \  \frac{1}{\alpha} \ln \Big( 1 + \frac{  a }{ 1-2a-b} \Big) \,.
\end{equation}
 The same holds for $\tilde U$.   Denoting $ \tilde T_1$  the first jump time of $ \tilde U$, we have for all $  t_\delta \le t < T_1 \wedge \tilde T_1  , $ 
$$ \bar U(t) \vee  \tilde U(t)  \geq \lambda_*/k \,,\qquad \mbox{ whence }\qquad Z  (t)   = \lambda_* =  \tilde Z(t)\,.$$

\bigskip

\noindent
{\bf Step 1.} Let $  t > 0  $ and consider the last jump time before time $t,$ 
$$ L_t = \max \{ 0 \le s \le t : \tilde U (s-) \neq \tilde U(s) \mbox{ or } \bar U(s-) \neq \bar U(s) \} , \qquad \text{ with } \max \emptyset := 0\,.$$ 
Remark that if $ L_t \le t- t_\delta$, then $Z(t)=\tilde Z(t)=\lambda_*$. 

{\it Case 1.} If $L_t>(t-t_\delta)_+$ is a synchronous jump of $ \bar U $ and $ \tilde U$, 
then both $ \bar U$ and $ \tilde U$ are reset to $0$ at time $L_t,$ such that on this event, 
\begin{equation}\label{eq:zt}
|Z(t)-\tilde Z(t)| \ \le\  k| \bar U(t) - \tilde U(t) |
 \le \  kh \int_{(t-t_\delta)_+}^t e^{- \alpha (t-s) }  | z_s - \tilde z_s | ds 
 .
\end{equation} 
Besides,
\[\mathbb P\po \text{synchronous jump in }[(t-t_\delta)_+,t] \pf \ \le \ 1- e^{-\lambda_* t_\delta} \ \le \ \lambda_* t_\delta\,,\]
whence
\[\E \po | Z_t - \tilde Z_t | \indiq_{\{ L_t \mbox{\tiny \ synchronous jump}\}}\pf \ \le \ kh \lambda_* t_\delta \int_{(t-t_\delta)_+}^t e^{- \alpha (t-s) }  | z_s - \tilde z_s | ds .\]

{\it Case 2.}  If $ L_t > (t-t_\delta)_+$ is an asynchronous jump, i.e. a jump only for one of the two processes, then, since $ | Z(t) - \tilde Z(t) | \le \lambda_*,$ we may simply upper bound 
$$ | Z (t) - \tilde Z(t) | \indiq_{\{ L_t \mbox{\tiny \ asynchronous jump}\}} \ \le\  \lambda_* \indiq_{\{ L_t \mbox{\tiny \ asynchronous jump}\}} \indiq_{\{ L_t \geq (t -t_\delta)_+ \}}.$$
The probability of having an uncommon jump between $ t- t_\delta $ and $t$ is upper bounded by the integral of the expectations of the differences of the intensities, that is, by
$$\P ( \mbox{asynchronous jump in } [(t -t_\delta)_+, t] ) \le  \int_{(t-t_\delta)_+}^t \E (| Z(s) - \tilde Z(s) ) | ds  . $$ 
Therefore,
$$ \E \Big(  | Z (t) - \tilde Z(t) | 1_{\{ L_t \mbox{\tiny \ asynchronous jump}\}} \Big) \le  \lambda_*  \int_{(t-t_\delta)+}^t \E (| Z(s) - \tilde Z(s) ) | ds  .$$ 

{\it Case 3.} Finally, suppose that no jump has happened during $ [0, t ].$ This case is only interesting for $ t < t_\delta, $ because otherwise $ Z(t)= \tilde Z(t) = \lambda_*.$ The same goes if $\bar U(0)\wedge \tilde U(0)\geq \lambda_*/k$. If $\bar U(0)\vee \tilde U(0)\leq \lambda_*/k$,
\begin{eqnarray*}
|Z(t)-\tilde Z(t)| \ \le\  k| \bar U(t) - \tilde U(t) | \ \le \  kh \int_{0}^t e^{- \alpha (t-s) }  | z_s - \tilde z_s | ds + e^{- \alpha t } | Z(0 ) - \tilde Z (0) |\,,
\end{eqnarray*}
where we used that $k \bar U(0)=Z(0)$ and $k\tilde U(0) = \tilde Z(0)$. If one of the processes (say $\bar U$) starts below $\lambda_*/k$ and the other above, we are brought back to the previous case by considering the solution $\hat U$ of $d\hat U= - \alpha \hat u + z_s$ with $\hat U(0)=\lambda_*/k$, in which case $Z(s)=\lambda_* = \lambda(\hat U(s))$ for all $s\in[0,t]$ and $Z(0)=\hat U(0)$. So, in all cases, we have obtained that 
\[|Z(t)-\tilde Z(t)|\indiq_{\{\mbox{\tiny no jump in }[0,t]\}} \ \le \ kh \int_{0}^t e^{- \alpha (t-s) }  | z_s - \tilde z_s | ds + e^{- \alpha t } | Z(0 ) - \tilde Z (0) |\,.\]  

{\it Conclusion of Step 1.} Putting these three cases together and writing for short $f(t) = \E | Z (t) - \tilde Z(t)|$, we conclude that for all $t\geq 0$, 
\begin{eqnarray}\label{eq:gronwall}
 f(t) & \le & \nu_\delta \int_{(t-t_\delta)_+}^t  f(s) ds    
+  \left[ kh  \int_0^t  f(s) ds  + f(0) \right] \indiq_{ [0, t_\delta]} (t) \,,
\end{eqnarray} 
with $\nu_\delta  =kh\lambda_* t_\delta+\lambda_*$.

{\bf Step 2.} Using Gronwall's inequality, for all $t \le t_\delta $,
\[f(t)   \ \leq \ f(0) e^{\po \nu_\delta +kh\pf t }\,.\]
As a consequence, for $ t \in  [ t_\delta, 2 t_\delta ],$ 
\[ f(t) \ \le \ \nu_\delta \left[ \int_{t-t_\delta} ^{t_\delta}  f(s) ds +    \int_{t_\delta}^t  f(s) ds \right] 
\ \le \ \nu_\delta \left[ \int_{t_\delta}^{t}  f(s) ds +    f(0) e^{\po \nu_\delta +kh\pf t_\delta } t_\delta \right] \,, \]
and Gronwall's inequality implies 
$$ f(t) \le \nu_\delta t_\delta e^{(2\nu_\delta+kh) t_\delta}   f(0)  $$
for all $ t \in  [ t_\delta, 2 t_\delta ].$  Iterating the above inequality over time intervals $ [n t_\delta, (n+1) t_\delta[ $ for $n\geq2$, we obtain that for all $ t \in [n t_\delta, (n+1) t_\delta[, $
$$ f(t)  \le (\nu_\delta t_\delta e^{\nu_\delta t_\delta})^n e^{ (\nu_\delta+kh)  t_\delta}  f(0) \,.
$$ 
Since  $\ln(1+ x) < x$ for all $x> 0$,
\begin{eqnarray*}
\nu_\delta t_\delta \ \underset{\delta\rightarrow 0}\longrightarrow \  \lambda_* t_0 \po  1 + kht_0\pf &= & \frac{b}{a}\ln \Big( 1 + \frac{  a }{ 1-2a-b} \Big)  \po 1+ \frac1a\ln \Big( 1 + \frac{  a }{ 1-2a-b} \Big) \pf \\
&   < & \frac{b}{1-2a-b}\po 1 + \frac{1}{1-2a-b}\pf \ \leqslant \ y_0\,,
\end{eqnarray*}
(this is the condition \eqref{condition_a_b_y0}). As a consequence, we can chose  $\delta>0$ small enough so that $\nu_\delta t_\delta e^{\nu_\delta t_\delta} <1$. Using that $ n+1 \geq t/t_\delta$ for $t\in [n t_\delta,(n+1)t_\delta]$, we have thus obtained
\[f(t) \ \leq\ \po \co \nu_\delta t_\delta  e^{ \nu_\delta t_\delta}\cf^{1/t_\delta}\pf^t \frac{e^{kht_\delta}}{\nu_\delta t_\delta}  f(0) \,,\]
which concludes the proof.
\end{proof}

\begin{cor}\label{Cor:CouplNonLin}
Under Assumption \ref{ass:lambda} and provided \eqref{condition_a_b_y0} holds,  let $\kappa$ be given by Theorem~\ref{theo:CouplNonLin}. For all $\gamma>0$, there exists $C_\gamma>0$ such that for all initial conditions with $z_0\wedge \tilde z_0 \geqslant \gamma$, considering the synchronous coupling of $U$ and $\tilde U$, for all $t\geq 0$,
\[ \E\po | Z(t)-\tilde Z(t)|\pf\ \ \le \ C_\gamma \kappa^t \E\po | Z(0)-\tilde Z(0)|\pf\,.\]
\end{cor}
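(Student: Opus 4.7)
The strategy is to extend Theorem~\ref{theo:CouplNonLin} from the neighbourhood $[x_\infty-\delta,\lambda_*]$ of the non-trivial equilibrium to any positive threshold $\gamma$, by splitting the time interval $[0,t]$ into a short initial phase used to enter this neighbourhood, and a subsequent phase on which Theorem~\ref{theo:CouplNonLin} itself applies verbatim.

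\textbf{Step 1 (recurrence to the good neighbourhood).} Fix $\delta>0$ as in Theorem~\ref{theo:CouplNonLin}. Under condition \eqref{condition_a_b_y0} one has $kh>\lambda_*+\alpha$, so Proposition~\ref{PropOinstable} applies and yields $z_t\geqslant x_t$, where $(x_t)_{t\geqslant 0}$ solves the auxiliary ODE \eqref{eq:limitz} with $x_0=z_0\wedge z_\infty\geqslant \gamma\wedge z_\infty>0$. Since $x_\infty$ is the unique positive equilibrium of this ODE and is globally attracting on $(0,\infty)$, there exists $T_\gamma<\infty$, depending only on $\gamma$ and on the fixed parameters, such that $x_t\geqslant x_\infty-\delta$ for all $t\geqslant T_\gamma$. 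The same reasoning applied to $(\tilde z_t)$ gives $\tilde z_t\geqslant x_\infty-\delta$ for $t\geqslant T_\gamma$; re-applying Proposition~\ref{PropOinstable} from each later instant shows that this lower bound is preserved for all $s\geqslant T_\gamma$.

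\textbf{Step 2 (short-time Gronwall bound).} For $t\in[0,T_\gamma]$, I would establish a crude bound
\[ f(t)\ :=\ \E\po |Z(t)-\tilde Z(t)|\pf\ \leqslant\ C_1\, f(0)\]
for a constant $C_1=C_1(\gamma)$. The plan is to redo the three-case decomposition along the last jump time $L_t$ carried out in Step~1 of the proof of Theorem~\ref{theo:CouplNonLin}, dropping the assumption $z_s\geqslant x_\infty-\delta$ but truncating the analysis at the finite horizon $T_\gamma$ rather than $t_\delta$. Using the Lipschitz bound $|Z-\tilde Z|\leqslant k|\bar U-\tilde U|$ in the synchronous-coupling SDE, together with $|z_s-\tilde z_s|\leqslant f(s)$ and the fact that both synchronous and asymmetric jumps contribute a non-positive expected change to $|Z-\tilde Z|$, one obtains an integral inequality of the form
\[ f(t)\ \leqslant\ C\int_0^t f(s)\,ds\ +\ C\,f(0)\qquad \text{for } t\in[0,T_\gamma],\]
from which Gronwall's lemma gives $f(t)\leqslant f(0)\,e^{CT_\gamma}=:C_1 f(0)$.

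\textbf{Step 3 (long-time contraction).} For $t\geqslant T_\gamma$, I would apply Theorem~\ref{theo:CouplNonLin} to the time-shifted pair $(\bar U(T_\gamma+s),\tilde U(T_\gamma+s))_{s\geqslant 0}$, whose initial rates $z_{T_\gamma},\tilde z_{T_\gamma}$ satisfy the hypothesis $\geqslant x_\infty-\delta$ by Step~1 and whose synchronous coupling is preserved under time shift. This gives
\[ f(t)\ \leqslant\ C\kappa^{t-T_\gamma}f(T_\gamma)\ \leqslant\ C\,C_1\,\kappa^{t-T_\gamma}f(0)\,. \]
For $t\leqslant T_\gamma$, Step~2 combined with $\kappa^{-T_\gamma}\kappa^t\geqslant 1$ yields the same type of bound. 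Setting $C_\gamma:=C\,C_1\,\kappa^{-T_\gamma}$ concludes the proof.

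\textbf{Main obstacle.} The delicate step is Step~2. The difficulty is that in the saturated regime, when both $\bar U$ and $\tilde U$ sit above $\lambda_*/k$, one may have $|Z-\tilde Z|=0$ while $|\bar U-\tilde U|>0$, and the Lipschitz bound $|Z-\tilde Z|\leqslant k|\bar U-\tilde U|$ is very loose; only once trajectories cross into the linear regime $[0,\lambda_*/k]$ does the equality $|Z-\tilde Z|=k|\bar U-\tilde U|$ close the Gronwall loop. Reproducing the three-case analysis of Theorem~\ref{theo:CouplNonLin} with horizon $T_\gamma$ in place of $t_\delta$ therefore requires handling, in the sub-case $\{L_t\leqslant t-T_\gamma\}$ (no jump on a long interval, with possibly $\bar U,\tilde U$ straddling $\lambda_*/k$), an auxiliary deterministic trajectory replacing the saturated one, exactly as in Case~3 of the cited proof, to transfer the bound to a form involving $f(0)$ rather than $\E|\bar U(0)-\tilde U(0)|$.
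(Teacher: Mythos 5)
Your proposal matches the paper's proof essentially step for step: Step 1 uses Proposition~\ref{PropOinstable} and the attractiveness of $x_\infty$ to get $z_t\wedge\tilde z_t\geqslant x_\infty-\delta$ after a $\gamma$-dependent time $T_\gamma$, Step 2 establishes the crude Gronwall bound $f(t)\leqslant e^{(kh+\lambda_*)t}f(0)$ for short times via the same decomposition along the last jump time (distinguishing only synchronous/no-jump versus asynchronous, no contraction needed), and Step 3 applies Theorem~\ref{theo:CouplNonLin} to the time-shifted pair after $T_\gamma$. The paper handles your ``main obstacle'' exactly as you anticipate (inheriting the auxiliary-trajectory device from Case~3 of Theorem~\ref{theo:CouplNonLin}), so there is no gap.
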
 

\begin{proof}
Let $C,\delta>0$ be given with $\kappa$ in Theorem~\ref{theo:CouplNonLin}.  By Proposition~\ref{PropOinstable}, if $z_0\wedge \tilde z_0 \geqslant \gamma$ then $z_t\wedge \tilde z_t \geqslant x_\infty - \delta$ for all $t$ larger than some finite $T_\gamma$, which is the time taken by the solution of the  auxiliary limit equation \eqref{eq:limitz} to reach $x_\infty - \delta$. Hence, after time $T_\gamma$, we can apply Theorem~\ref{theo:CouplNonLin} to get that
\[ \E\po | Z(t)-\tilde Z(t)|\pf\ \ \leqslant \ C \kappa^{t-T_\gamma} \E\po | Z(T_\gamma)-\tilde Z(T_\gamma)|\pf\]
for all $t\geqslant T_\gamma$. It only remains to control the small times $t\leqslant T_\gamma$. The proof is similar to the proof of Theorem~\ref{theo:CouplNonLin}, except that now we don't need to prove a contraction, only a rough bound.

For $t\geqslant 0$, let $L_t$ be the time of the last jump before time $t$. If $L_t=0$ (no jump) or if $L_t$ is a synchronous jump of $\bar U$ and $\tilde U$, we simply bound
\[|Z(t)-\tilde Z(t)| \ \leqslant \ |Z(L_t)-\tilde Z(L_t)| + kh \int_{L_t}^t|z_s-\tilde z_s|ds\,.\]
If $L_t$ is an asynchronous jump, as in the proof of Theorem~\ref{theo:CouplNonLin} we simply bound $|Z(t)-\tilde Z(t)| \leqslant \lambda_*$ and then bound the probability to have an asynchronous jump in $[0,t]$ by $\int_0^t \mathbb E(|Z(s)-\tilde Z(s)|ds$. Gathering these two cases we get that for all $t\geqslant 0$
\[ \E\po | Z(t)-\tilde Z(t)|\pf\ \ \leqslant \   \E\po | Z(0)-\tilde Z(0)|\pf + (kh+\lambda_*) \int_0^t   \E\po | Z(s)-\tilde Z(s)|\pf ds \,.\]
By Gronwall's Lemma,
\[ \E\po | Z(t)-\tilde Z(t)|\pf\ \ \leqslant \  e^{(kh+\lambda_*)  t} \E\po | Z(0)-\tilde Z(0)|\pf   \]
for all $t\geqslant 0$, which concludes.
\end{proof}
 
A straightforward corollary of this result is then 
 
\begin{cor}\label{Cor:uniqueEquilibre}
Under Assumption \ref{ass:lambda} and provided  \eqref{condition_a_b_y0} holds, the non-linear system \eqref{eq:limitU} admits exactly one non-zero equilibrium $g$ as described in Theorem \ref{theo:41}. Moreover, for all solutions of the non-linear system with $z_0 >0$, we have that $z_t\to  \int_0^\infty \lambda(x)g(x)dx$ as $t\to\infty$.
\end{cor}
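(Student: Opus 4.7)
My plan is to derive the corollary as a short deduction from Corollary~\ref{Cor:CouplNonLin} combined with Theorem~\ref{theo:41} and Proposition~\ref{prop:delta0instable1}. The key observation is that Corollary~\ref{Cor:CouplNonLin} provides an exponential contraction $\E|Z(t)-\tilde Z(t)|\le C_\gamma \kappa^t \E|Z(0)-\tilde Z(0)|$ as soon as both processes have mean jump rate uniformly bounded below by $\gamma>0$. Applying this to stationary solutions or to processes that eventually enter the region $\{z\ge \gamma\}$ will force convergence to a single equilibrium.

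For uniqueness, I would start from Theorem~\ref{theo:41}: any non-trivial equilibrium $g$ is completely determined by the scalar $p_*=\int_0^\infty \lambda\, dg$ through formula \eqref{eq:invariantg}. Suppose $g$ and $\tilde g$ are two non-zero equilibria with corresponding constants $p_*$ and $\tilde p_*$. By Proposition~\ref{PropOinstable}, $p_*\wedge\tilde p_*\ge x_\infty>0$, so I can take the synchronous coupling of two stationary processes $\bar U$ and $\tilde U$ with initial distributions $g$ and $\tilde g$ respectively. Then $z_t\equiv p_*$ and $\tilde z_t\equiv \tilde p_*$ for all $t$, so Corollary~\ref{Cor:CouplNonLin} applies with $\gamma=p_*\wedge \tilde p_*$ and yields $\E|Z(t)-\tilde Z(t)|\to 0$. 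But $|z_t-\tilde z_t|=|p_*-\tilde p_*|$ is a lower bound for this quantity at every $t$, forcing $p_*=\tilde p_*$, hence $g=\tilde g$ by \eqref{eq:invariantg}.

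For the convergence statement, fix a solution $\bar U$ of \eqref{eq:limitU} with initial distribution $\mu_0$ satisfying $z_0>0$; in particular $\mu_0\ne\delta_{\bf 0}$. Proposition~\ref{prop:delta0instable1} (whose hypothesis $kh>\alpha$ is implied by condition \eqref{condition_a_b_y0}) provides a constant $c>0$ and a time $T$ such that $z_t\ge c$ for all $t\ge T$. Couple synchronously $\bar U$ (restarted from time $T$) with a stationary solution $\tilde U$ of law $g$, for which $\tilde z_t\equiv p_*\ge x_\infty>0$. Setting $\gamma=c\wedge p_*$, Corollary~\ref{Cor:CouplNonLin} applied from time $T$ yields
\[\E|Z(T+s)-\tilde Z(T+s)|\ \le\ C_\gamma \kappa^s \E|Z(T)-\tilde Z(T)|\ \le\ C_\gamma \lambda_* \kappa^s\,,\]
and since $|z_{T+s}-p_*|\le \E|Z(T+s)-\tilde Z(T+s)|$, we conclude that $z_t\to p_*=\int_0^\infty \lambda\, dg$ as $t\to \infty$.

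None of the steps are really delicate: the argument is essentially a textbook application of contraction plus Proposition~\ref{prop:delta0instable1} to get into the regime where contraction holds. The only mild subtlety is observing that Corollary~\ref{Cor:CouplNonLin} can be invoked starting from time $T$ rather than time $0$ (using the Markov property of the non-linear flow), and that the form \eqref{eq:invariantg} turns equality of the scalars $p_*$ into equality of the measures.
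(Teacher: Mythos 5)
Your argument is correct and is exactly the kind of unpacking the paper intends when it calls the corollary a "straightforward" consequence of Corollary~\ref{Cor:CouplNonLin}: use the contraction on the synchronous coupling of two stationary solutions to force the scalars $p_*$, $\tilde p_*$ to coincide, then invoke formula \eqref{eq:invariantg}; and for convergence, first bring $z_t$ above a fixed positive level (Proposition~\ref{prop:delta0instable1} or Proposition~\ref{PropOinstable}) and then run the contraction against the stationary solution. The observation that $|z_t-\tilde z_t|\le \E|Z(t)-\tilde Z(t)|$ and the restart at time $T$ via the Markov property of the nonlinear flow are both fine.
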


The contraction at the level of the jump rates then yields a Wasserstein contraction at the level of the processes~:

\begin{proof}[Proof of Theorem~\ref{theo:gattractive}]
Let $\kappa,C,\delta$ be given by Theorem~\ref{theo:CouplNonLin} and $t_\delta$ as defined in the proof of the latter. Let $L_t$ be the last jump before time $t$ ($L_t = 0$ if there is no jump). Then
 \begin{eqnarray}\label{eq:UtildeU}
|\bar U(t) - \tilde U(t) | & \le & h \int_{L_t}^t e^{-\alpha (t-s)}|z_s-\tilde z_s|ds + e^{-\alpha (t-L_t)}|\bar U(L_t)-\tilde U(L_t)| \,. 
 \end{eqnarray}
%

\textbf{Step 1.}  Consider first the case where $z_0\wedge \tilde z_0 \geq x_\infty - \delta$. 
 First, if $L_t$ is a synchronous jump of $\bar U$ and $\tilde U$, the last term of \eqref{eq:UtildeU} is zero. Second, if it is an asynchronous jump, say $\bar U$ has jumped but not $\tilde U$, then necessarily $\lambda(\tilde U(L_t)) < \lambda_*$ so that $\tilde U(L_t)\leq \lambda_* /k $, while $\bar U(L_t)=0$. As in Step 1--Case 2 of the proof of Theorem \ref{theo:CouplNonLin}, we bound
\begin{eqnarray*}
\E \po  | \bar U(L_t) - \tilde U(L_t) | 1_{\{ L_t \mbox{\tiny \ asynchronous jump}\}} \pf  & \le  & \frac{ \lambda_*}{k}  \int_{(t-t_\delta)_+}^t \E (| Z(s) - \tilde Z(s) ) | ds  \\
&\leq & \lambda_*  t_\delta C \kappa^{t-t_\delta} \E \po  | \bar U(0) - \tilde U(0) |\pf\,  .
\end{eqnarray*}
Third, we bound
 \begin{eqnarray*}
 \int_{L_t}^t e^{-\alpha (t-s)}|z_s-\tilde z_s|ds  & \leqslant & \indiq_{\{L_t>t/2\}}\int_{t/2}^t  |z_s-\tilde z_s|ds +\indiq_{\{L_t\le t/2\}} \int_{0}^t |z_s-\tilde z_s|ds \\
 & \leqslant & Ck \po \frac {t\kappa^{t/2}}2  + \frac{1}{\ln(1/\kappa)}\indiq_{\{L_t\le t/2\}}\pf \E \po  | \bar U(0) - \tilde U(0) |\pf\,.
 \end{eqnarray*}
 The event $\{L_t\le t/2\}$ implies that there is no jump on the time interval $[t/2,t]$. In that case, as in the proof of Theorem~\ref{theo:CouplNonLin}, $\bar U$ and $\tilde U$ will be above the level $\lambda_*/k$ for all times larger than $t/2+t_\delta$, in which case their jump rates will be $\lambda_*$. As a consequence,
 \[\mathbb P\po L_t \leqslant t/2\pf \ \leqslant \ e^{-\lambda_* (t/2-t_\delta)}\,.\]
Gathering all the previous bounds, we have finally obtained that, in the case where $z_0\wedge \tilde z_0 \geq x_\infty - \delta$, for all $t\geqslant 0, $
\begin{eqnarray*}
\E\po |\bar U(t) -\tilde U(t)|\pf & \leqslant & \co e^{-\alpha t} +  \lambda_*  t_\delta C \kappa^{t-t_\delta}+ Ckh \po \frac {t\kappa^{t/2}}2  + \frac{e^{-\lambda_* (t/2-t_\delta)}}{\ln(1/\kappa)}\pf \cf \E \po  | \bar U(0) - \tilde U(0) |\pf\\
& \leqslant & \tilde C \tilde \kappa^t\E \po  | \bar U(0) - \tilde U(0) |\pf
\end{eqnarray*}
for some $\tilde C\geqslant 1$, $\tilde \kappa \in (0,1)$.
 
 \textbf{Step 2.} Now we only suppose that $z_0\wedge\tilde z_0 \geqslant \gamma$ for some $\gamma>0$. Since $z_t\wedge \tilde z_t$ will reach $x_\infty-\delta$ in a finite time $T_\gamma$, as in the proof of Corollary~\ref{Cor:CouplNonLin}, it only remains to obtain a rough bound for small times $t\leqslant T_\gamma$. Using that
 \[\E \po  | \bar U(L_t) - \tilde U(L_t) | 1_{\{ L_t \mbox{\tiny  \ asynchronous jump}\}} \pf  \ \le  \  \frac{ \lambda_*}k  \mathbb P\po \text{there is an asynchronous jump in }[0,t]\pf\,,  \] 
 we get from \eqref{eq:UtildeU} that 
\begin{eqnarray*}
\mathbb E \po |\bar U(t) - \tilde U(t) |\pf   & \leqslant &  h \int_{0}^t |z_s-\tilde z_s|ds + \mathbb E\po |\bar U(0)-\tilde U(0)|\pf + \frac{\lambda_*}{k}\int_0^t \mathbb E\po |Z(s)-\tilde Z(s)|\pf ds \\
& \leqslant & (kh+\lambda_*) \int_0^t \mathbb E\po |\bar U(s)-\tilde U(s)|\pf ds + \mathbb E\po |\bar U(0)-\tilde U(0)|\pf\,,
\end{eqnarray*}
and thus, for all $t\geqslant 0$,
\[\mathbb E \po |\bar U(t) - \tilde U(t) |\pf  \ \leqslant \ e^{(kh+\lambda_*)t} \mathbb E\po |\bar U(0)-\tilde U(0)|\pf\,.\] 
Using the result of the first step, we conclude the proof of the first claim~: for all $t\geqslant 0$,
\begin{eqnarray*}
\mathbb E \po |\bar U(t) - \tilde U(t) |\pf  & \leqslant & \tilde C \tilde \kappa^{(t-T_\gamma)_+}  \mathbb E \po |\bar U(T_\gamma\wedge t) - \tilde U(T_\gamma\wedge t) |\pf\\
& \leqslant & \tilde C \tilde \kappa^{t-T_\gamma}  e^{(kh+\lambda_*)T_\gamma} \mathbb E\po |\bar U(0)-\tilde U(0)|\pf\,.
\end{eqnarray*}

The two other claims are immediately obtained by choosing $(U(0),\tilde U(0))$ according to a $W_1$-optimal coupling of $\eta_0$ and $\tilde \eta_0$, and by considering the case where $\tilde \eta_0=g$.
\end{proof}

\section{Exponentiality of exit times: a general result}\label{sec:generalresult}

In this section we give general conditions that ensure that the rescaled exit times of a  Markov process are close in law to the exponential law, extending the results of \cite{brassesco} for low-noise diffusion processes. Let $(X_t)_{t\geqslant 0}$ be a time-homogeneous strong Markov process taking values in some Polish space $E$. Let $\mathcal D,\mathcal K$ be   measurable subsets of $E$ with $\emptyset \neq \mathcal K \subset \mathcal D$. For $A\subset E$, denote
\[\tau_A \ = \ \inf\{t\geqslant 0\ : \ X_t \in A\}\,.\]
Let $\varepsilon_1,\varepsilon_2,\varepsilon_3 \in [0,1]$ and $s_1\geqslant s_2>0$ be such that
\begin{eqnarray}
\varepsilon_1 & \geqslant & \sup_{x\in\mathcal K} \ \mathbb P_x \po \tau_{\mathcal D^c} \leqslant s_1 \pf \label{eqdef:generalexit3}  \\
 \varepsilon_2 & \geqslant & \sup_{x\in\mathcal D} \ \mathbb P_x \po \tau_{\mathcal D^c} \wedge   \tau_{\mathcal  K} > s_2 \pf \label{eqdef:generalexit11} \\
\varepsilon_3  & \geqslant  &   \sup_{t \geq 0} \sup_{x,y\in\mathcal K}\left|\mathbb P_x \po \tau_{\mathcal D^c} > t  \pf - \mathbb P_y \po \tau_{\mathcal D^c} >t  \pf\right| \label{eqdef:generalexit1}\,.
\end{eqnarray}
At the end of this section (Proposition~\ref{prop:eps1}) we provide a general coupling argument to bound $\varepsilon_3$.

To fix ideas, the set $\mathcal K$ can be thought of as a metastable set, far from the boundary of $\mathcal D$ but in which the process spends most of its time as long as it hasn't left $\mathcal D$ (for a diffusion process with small noise, $\mathcal K$ would be a neighborhood of a fixed point of the deterministic ODE at zero temperature). Having in mind that $\varepsilon_1,\varepsilon_2$ and $\varepsilon_3$ are meant to be small, the conditions \eqref{eqdef:generalexit3}, \eqref{eqdef:generalexit11} and \eqref{eqdef:generalexit1} typically  mean that,  whatever its initial condition in $\mathcal D$, if the process hasn't already left $\mathcal D$ in a time $s_2$ then it has probably reached $\mathcal K$, and then it will typically stay in $\mathcal D$ at least for a time $s_1$ and forget its initial position in $\mathcal K$.

In order to state the main result of this section, we fix some $x_0 \in \mathcal K$. We would like to consider  $\beta>0$ such that $\mathbb P_{x_0} \po \tau_{\mathcal D^c} > \beta\pf \in [1/4,3/4]$. While the existence of such $\beta$ is clear in the case of diffusion processes, for which $t\mapsto \mathbb P_x\po \tau_{\mathcal D^c} > t\pf $ is continuous, it is not necessarily easy  to check in general. Nevertheless we can state the following.

\begin{ass}\label{hyp:exit}
We have $\varepsilon_1+\varepsilon_2 +\varepsilon_3 \leqslant 1/2$ and $\tau_{\mathcal D^c}$ is $\mathbb P_{x_0}$-almost surely finite. 
\end{ass}

\begin{lem}\label{lem:beta-exitgene}
Under Assumption~\ref{hyp:exit}, there exists $\beta>0$ such that $\mathbb P_{x_0} \po \tau_{\mathcal D^c} > \beta\pf \in [1/4,3/4]$.
\end{lem}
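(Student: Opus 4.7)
I first observe that $F(t):=\mathbb P_{x_0}(\tau_{\mathcal D^c}>t)$ is a non-increasing right-continuous function of $t$. From \eqref{eqdef:generalexit3} and $\varepsilon_1\leqslant 1/2$ we get $F(s_1)\geqslant 1-\varepsilon_1>0$, so a Blumenthal-type 0-1 argument gives $F(0)=1$, and by the almost-sure finiteness in Assumption~\ref{hyp:exit}, $\lim_{t\to+\infty}F(t)=0$. The natural candidate is then $\beta:=\inf\{t\geqslant 0 : F(t)\leqslant 3/4\}$, a finite positive number. Right-continuity of $F$ yields $F(\beta)\leqslant 3/4$, and the monotonicity of $F$ together with the definition of the infimum gives $F(\beta^-)\geqslant 3/4$. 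If $F(\beta)\geqslant 1/4$, this $\beta$ satisfies the conclusion.

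Suppose now for contradiction that $F(\beta)<1/4$. Then $\tau_{\mathcal D^c}$ carries an atom at $\beta$ of mass $\mathbb P_{x_0}(\tau_{\mathcal D^c}=\beta)=F(\beta^-)-F(\beta)>1/2$, and the rest of the argument consists in contradicting this using $\varepsilon_1+\varepsilon_2+\varepsilon_3\leqslant 1/2$. If $\beta\leqslant s_1$, the bound \eqref{eqdef:generalexit3} directly yields $\mathbb P_{x_0}(\tau_{\mathcal D^c}=\beta)\leqslant \mathbb P_{x_0}(\tau_{\mathcal D^c}\leqslant s_1)\leqslant \varepsilon_1\leqslant 1/2$, which is the desired contradiction. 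If $\beta>s_1$, I apply the strong Markov property at time $\beta-s_1$,
\[\mathbb P_{x_0}(\tau_{\mathcal D^c}=\beta) \;=\; \mathbb E_{x_0}\bigl[\indiq_{\{\tau_{\mathcal D^c}>\beta-s_1\}}\,\mathbb P_{X_{\beta-s_1}}(\tau_{\mathcal D^c}=s_1)\bigr],\]
and split the expectation according to whether $X_{\beta-s_1}\in \mathcal K$. The contribution of $\{X_{\beta-s_1}\in \mathcal K\}$ is at most $\varepsilon_1$ by \eqref{eqdef:generalexit3}. The complementary contribution is controlled by a further Markov decomposition at the entry time $\tau_{\mathcal K}$, using \eqref{eqdef:generalexit11} (which bounds $\mathbb P_y(\tau_{\mathcal K}\wedge \tau_{\mathcal D^c}>s_2)\leqslant \varepsilon_2$ from any $y\in\mathcal D$, i.e. the probability of not yet having visited $\mathcal K$) and \eqref{eqdef:generalexit1} (which transfers the atom probability from a random point $X_{\tau_{\mathcal K}}\in\mathcal K$ back onto $F$ itself at cost $\varepsilon_3$). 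Summing, $\mathbb P_{x_0}(\tau_{\mathcal D^c}=\beta)\leqslant \varepsilon_1+\varepsilon_2+\varepsilon_3\leqslant 1/2$, the desired contradiction.

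The main technical obstacle is the last step. Since the process may enter and leave $\mathcal K$ several times before $\beta$, one cannot read off directly from \eqref{eqdef:generalexit11} that $X_{\beta-s_1}\in\mathcal K$ with high probability. The remedy is to condition on the stopping time $\tau_{\mathcal K}$ rather than on the value of $X$ at the deterministic time $\beta-s_1$: \eqref{eqdef:generalexit11} controls the event that $\mathcal K$ has not been reached in time $s_2$, while on its complement the dynamics restarts at $X_{\tau_{\mathcal K}}\in\mathcal K$ and \eqref{eqdef:generalexit1} applies cleanly. Converting the survival-based estimate \eqref{eqdef:generalexit1} into an atom-based estimate (via $\mathbb P_y(\tau_{\mathcal D^c}=t)=\mathbb P_y(\tau_{\mathcal D^c}\geqslant t)-\mathbb P_y(\tau_{\mathcal D^c}>t)$) without picking up an extra factor two in front of $\varepsilon_3$ is the nonstandard point, handled by using the one-sided bounds in \eqref{eqdef:generalexit1} in the right direction rather than bounding atoms in absolute value.
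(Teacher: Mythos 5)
Your route differs from the paper's. The paper's proof of Lemma~\ref{lem:beta-exitgene} simply invokes Lemma~\ref{lem:exit_gen1} (with $s=s_2$ and $f_0(s_2)\geq f_0(s_1)\geq 1-\varepsilon_1$) to obtain the uniform estimate $f_0(t)-f_0(t+s_2)\leq\tfrac12$, and concludes by observing that a non-increasing function decreasing from $1$ to $0$ whose drops over every $s_2$-interval are at most $1/2$ must hit $[1/4,3/4]$. You instead fix $\beta=\inf\{t:F(t)\leq 3/4\}$ and try to exclude an atom of mass $>1/2$ at $\beta$. The Markov decomposition you then use (condition at time $\beta-s_1$, then at $\tau_{\mathcal K}$) is essentially the one appearing in the proof of Lemma~\ref{lem:exit_gen1}, but you are redoing it for an atom rather than for a survival probability, which is more delicate.

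The step you yourself flag as the ``nonstandard point'' is where the proposal is too vague to count as a proof, and as literally stated it is wrong: \eqref{eqdef:generalexit1} compares survival functions, not atoms, so transferring $\P_x(\tau_{\mathcal D^c}=t)$ for $x\in\mathcal K$ onto $f_0$ in general costs $2\varepsilon_3$ (one for $\P_x(\tau_{\mathcal D^c}\geq t)$ and one for $\P_x(\tau_{\mathcal D^c}>t)$), and ``using the one-sided bounds in the right direction'' does not by itself fix this because both brackets in $\P_x(\tau_{\mathcal D^c}=t)-\P_{x_0}(\tau_{\mathcal D^c}=t)=[\P_x(\tau_{\mathcal D^c}\geq t)-f_0(t^-)]-[\P_x(\tau_{\mathcal D^c}>t)-f_0(t)]$ need to be controlled in their unfavorable directions. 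To get your claimed total $\varepsilon_1+\varepsilon_2+\varepsilon_3$ one would have to bound $\P_x(\tau_{\mathcal D^c}\geq t)\leq 1$ trivially, use \eqref{eqdef:generalexit1} once to get $\P_x(\tau_{\mathcal D^c}>t)\geq f_0(s_1)-\varepsilon_3\geq 1-\varepsilon_1-\varepsilon_3$, and weight the two $\varepsilon_1$-contributions by the probabilities of the disjoint events $\{X_{\beta-s_1}\in\mathcal K\}$ and $\{X_{\beta-s_1}\notin\mathcal K\}$ so as not to double-count — none of which appears in your write-up. There is in fact a cleaner repair that avoids $\varepsilon_3$ entirely: since $X_{\tau_{\mathcal K}}\in\mathcal K$ and $s_1-\tau_{\mathcal K}\leq s_1$, bound $\P_{X_{\tau_{\mathcal K}}}(\tau_{\mathcal D^c}=s_1-\tau_{\mathcal K})\leq\P_{X_{\tau_{\mathcal K}}}(\tau_{\mathcal D^c}\leq s_1)\leq\varepsilon_1$ by \eqref{eqdef:generalexit3}; then for every $y\in\mathcal D$ one gets $\P_y(\tau_{\mathcal D^c}=s_1)\leq\varepsilon_1+\varepsilon_2$, and the Markov property at time $\beta-s_1$ gives $\P_{x_0}(\tau_{\mathcal D^c}=\beta)\leq\varepsilon_1+\varepsilon_2\leq 1/2$, the desired contradiction, making the initial split on whether $X_{\beta-s_1}\in\mathcal K$ unnecessary.
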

We postpone the proof.

Under Assumption~\ref{hyp:exit}, it will be convenient to consider  $C,\delta,M>0$ such that
\begin{equation}\label{eq:cond-general}
\frac{s_2}\beta \vee (\varepsilon_2 + \varepsilon_3) \ \leqslant \ C e^{-\delta M}\,,
\end{equation}
where $\beta$ is such that $\mathbb P_{x_0} \po \tau_{\mathcal D^c} > \beta\pf \in [1/4,3/4]$. Obviously, it is always possible to find some constants such that \eqref{eq:cond-general} holds but, for fixed $C,\delta$, most of the results in this section are interesting only for $M$ large enough. 

For a system of $N$ interacting particles as studied in the paper, $M$ would typically be $N$ (but it could also be $\sqrt N$, $\ln N$, etc.), and for a diffusion process at small temperature, $M$ would be the inverse temperature.

The aim of this section is to establish the following result.

\begin{theo}\label{thm:gene-exit}
For all $\delta,C>0$, there exists $M_0$ such that, if Assumption~\ref{hyp:exit} and \eqref{eq:cond-general} hold for some $M\geqslant M_0$, then we have the following properties. First, 
\[\sup_{y\in\mathcal D} \mathbb E_{y}(\tau_{\mathcal D^c}) \  <\  +\infty\,.\]
Second, for all $K,\theta>0$, there exist $K'>0$ (that depends on $\delta,C,K,\theta$ but not $M$) such that for all probability measures $\nu_1$ and $\nu_2$ on $\mathcal D$ such that
\[ \mathbb P_{\nu_i}\po \tau_{\mathcal D^c} \leqslant  s_2\pf \ \leqslant K e^{-\theta M}\,,\qquad i=1,2\,,\]
we have
\begin{eqnarray}\label{eq:thm-gene-exit-1}
\sup_{t\geqslant 0} |\mathbb P_{\nu_1}\po \tau_{\mathcal D^c} > t \mathbb E_{\nu_1}\po \tau_{\mathcal D^c} \pf \pf - e^{-t}|  & \leqslant & K' M^3e^{-\min (\delta /3  ,1/2,\theta) M}
\end{eqnarray}
and 
\begin{eqnarray}\label{eq:thm-gene-exit-2}
\left| \frac{\mathbb E_{\nu_1}\po \tau_{\mathcal D^c} \pf}{\mathbb E_{\nu_2}\po \tau_{\mathcal D^c} \pf}-1\right| & \leqslant & K' M^3e^{-\min (\delta/3,1/2,\theta) M}\,.
\end{eqnarray}
\end{theo}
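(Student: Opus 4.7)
The plan is to follow the strategy of Brassesco, Olivieri and Vares, reducing the convergence to the exponential law to a near-multiplicative property of the survival function $F(t) := \mathbb{P}_{x_0}(\tau_{\mathcal{D}^c} > t)$. The key observation is that, after a time of order $s_2$, the process has either exited $\mathcal{D}$ or entered $\mathcal{K}$ by \eqref{eqdef:generalexit11}, and once in $\mathcal{K}$ its survival function differs from $F$ by at most $\varepsilon_3$ by \eqref{eqdef:generalexit1}. Combined with the strong Markov property and the assumption $s_2/\beta \leq C e^{-\delta M}$, this yields an approximate semigroup identity for $F$ on the scale $\beta$.

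First I would establish the finite-expectation claim and the existence of $\beta$ in Lemma~\ref{lem:beta-exitgene}. For any $y \in \mathcal{D}$ and $t \geq s_2$, applying the strong Markov property at $\tau_{\mathcal{K}} \wedge s_2$ together with \eqref{eqdef:generalexit11} and \eqref{eqdef:generalexit1} gives
\[
\mathbb{P}_y(\tau_{\mathcal{D}^c} > t) \ \leq \ \varepsilon_2 + \varepsilon_3 + F(t-s_2).
\]
Iterating with $t = k\beta$, using $s_2 \ll \beta$ and $F(\beta) \leq 3/4$, produces a geometric decay $\mathbb{P}_y(\tau_{\mathcal{D}^c} > k\beta) \leq q^k$ for some $q<1$ once $M$ is large enough, from which $\sup_{y \in \mathcal{D}} \mathbb{E}_y(\tau_{\mathcal{D}^c}) < \infty$. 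The existence of $\beta$ itself is obtained by combining the same bound with $\mathbb{P}_{x_0}(\tau_{\mathcal{D}^c} < \infty) = 1$, which forces $F$ to decrease from $1$ and cross the band $[1/4,3/4]$.

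Second, applying the strong Markov property at time $t$ and the renewal bound above on the residual time yields, for $s \geq s_2$, the two-sided inequality
\[
F(t)\bigl(F(s) - \varepsilon_2 - \varepsilon_3\bigr)\ \leq\ F(t+s)\ \leq \ F(t)\bigl(F(s-s_2) + \varepsilon_2 + \varepsilon_3\bigr).
\]
Setting $u_k = F(k\beta)$ and iterating, using $u_1 \in [1/4,3/4]$ and controlling the cumulated error via $s_2/\beta\vee(\varepsilon_2+\varepsilon_3)\leq Ce^{-\delta M}$, we obtain
\[
F(t) \ = \ e^{-\rho t}\bigl(1 + O(M^2 e^{-\delta M/2})\bigr)
\]
on the range $t \leq e^{cM}$ for some $\rho > 0$ with $\rho\beta \in [\ln(4/3),\ln 4]$, and $\mathbb{E}_{x_0}(\tau_{\mathcal{D}^c}) = \rho^{-1}(1 + O(M e^{-\delta M}))$. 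For a general initial law $\nu$ satisfying $\mathbb{P}_\nu(\tau_{\mathcal{D}^c}\leq s_2)\leq K e^{-\theta M}$, conditioning on $X_{s_2}$ (which lies in $\mathcal{K}$ with probability at least $1-\varepsilon_2-Ke^{-\theta M}$) and using \eqref{eqdef:generalexit1} shows that $\mathbb{P}_\nu(\tau_{\mathcal{D}^c} > t) = F(t-s_2) + O(K e^{-\theta M} + \varepsilon_3)$, from which \eqref{eq:thm-gene-exit-1} and, by integration, \eqref{eq:thm-gene-exit-2} follow.

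The main obstacle I expect is propagating the error $\varepsilon_2 + \varepsilon_3$ through the iteration while keeping a bound uniform in $t \in [0,\infty)$: a naive iteration gives a cumulative error of order $k(\varepsilon_2+\varepsilon_3)\sim (t/\beta)e^{-\delta M}$, which blows up as $t \to \infty$. The standard remedy is to split the range of $t$ into a short regime, where $t/\mathbb{E}_\nu(\tau_{\mathcal{D}^c})$ grows at most polynomially in $M$ and the iteration yields an accurate comparison with $e^{-t}$, and a long regime, on which both $\mathbb{P}_\nu(\tau_{\mathcal{D}^c} > t \mathbb{E}_\nu(\tau_{\mathcal{D}^c}))$ and $e^{-t}$ are already smaller than $e^{-cM}$ thanks to the geometric tail from the first step. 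Careful tracking of the polynomial losses in these two regimes produces the $M^3$ prefactor in the final error bound.
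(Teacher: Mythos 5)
Your proposal follows the same route as the paper: you identify the same approximate multiplicative inequality
\[
F(t)\bigl(F(s)-\varepsilon_2-\varepsilon_3\bigr)\ \leqslant\ F(t+s)\ \leqslant\ F(t)\bigl(F(s-s_2)+\varepsilon_2+\varepsilon_3\bigr)
\]
(this is exactly the paper's Lemma~\ref{lem:exit_gen1}), you derive the uniform geometric tail over $\mathcal D$ from it the same way, and you split the analysis into a short and a long $t$-regime as in Proposition~\ref{Prop:exit_gene}. So the strategy is essentially identical to the paper's.

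The genuine gap is in the step where you claim that ``setting $u_k = F(k\beta)$ and iterating'' yields $F(t) = e^{-\rho t}\bigl(1 + O(M^2 e^{-\delta M/2})\bigr)$ for \emph{all} $t$ in the short regime. Iterating the approximate semigroup relation only on the coarse $\beta$-grid controls $F$ at multiples of $\beta$, but says nothing sharp about $F(t)$ for $t$ strictly between grid points, and the jump in $F$ across an interval of length $\beta$ is of order $1$, not of order $e^{-cM}$. To pin down the exponential form on the full interval one must first apply the approximate semigroup relation at a much finer scale: the paper does this in Lemma~\ref{lem:exit_gen2} by iterating with $s = s_0/k$ for $k$ up to $k_0\sim e^{\delta M/3}$, which localises $f$ at the fine mesh $u_0 = s_0/k_0$ and, by monotonicity, on $[0,u_0]$; only then can the semigroup relation be re-iterated to cover $[0,M]$ with a polynomially growing error. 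That subdivision argument is the technical heart of the proof and is not present in your sketch. A secondary slip: for a general initial law $\nu$, you condition on $X_{s_2}$, but \eqref{eqdef:generalexit11} only ensures the process \emph{visits} $\mathcal K$ before time $s_2$, not that $X_{s_2}\in\mathcal K$; the correct move (as in the paper's proof of Proposition~\ref{Prop:exit_gene}) is to apply the strong Markov property at the hitting time of $\mathcal K$, via the event $A_0 = \{\forall s\in[0,s_2],\ X_s\notin\mathcal K\}$ and its complement.
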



The remainder of this section is devoted to the proof of Theorem \ref{thm:gene-exit}. The general strategy is the following: for $t\geqslant0$, denote $f_0(t) = \mathbb P_{x_0}\po \tau_{\mathcal D^c} > t\pf$. Due to the Markov property, the loss of memory in $\mathcal K$ provided by \eqref{eqdef:generalexit1}, and the fast return to $\mathcal K$ after each excursion out of $\mathcal K$ provided by \eqref{eqdef:generalexit11}, we get that the function $f_0$ approximately satisfies the relation $f(t+s)=f(t)f(s)$, for $s,t\geqslant 0$, which characterizes functions of the form $t\mapsto e^{-ct}$ for some $c\in \R$. Obtaining, from this approximate functional relation, that $f_0$ is necessarily close to an exponential function, is then slightly technical but elementary.

More precisely, the main point is the following result (from which, in particular, Lemma~\ref{lem:beta-exitgene} will easily follow).
\begin{lem}\label{lem:exit_gen1}
For all  $t\geqslant 0$ and for all $s\geqslant s_2$,
\[f_0(t)  \po f_0\po s \pf - \varepsilon_2 - \varepsilon_3\pf \ \leqslant \ f_0(t+s) \ \leqslant \ f_0(t) \po f_0\po s - s_2\pf + \varepsilon_2 + \varepsilon_3\pf\,.\]
\end{lem}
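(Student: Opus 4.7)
The plan is to apply the Markov property twice: first at the deterministic time $t$, and then at the stopping time $\sigma=\tau_{\mathcal K}\wedge\tau_{\mathcal D^c}$ applied to the shifted process. The time-$t$ step produces the identity $f_0(t+s)=\mathbb E_{x_0}[\mathbf 1_{\tau_{\mathcal D^c}>t}\,\mathbb P_{X_t}(\tau_{\mathcal D^c}>s)]$, which reduces both inequalities to estimating the survival probability $\mathbb P_y(\tau_{\mathcal D^c}>s)$ uniformly in $y\in\mathcal D$. The second step exploits \eqref{eqdef:generalexit11}: because $\mathbb P_y(\sigma>s_2)\le\varepsilon_2$, with high probability the process has either left $\mathcal D$ or entered the trap $\mathcal K$ by time $s_2$; and on $\{\sigma\le s_2,X_\sigma\in\mathcal K\}$, the strong Markov property together with~\eqref{eqdef:generalexit1} sandwiches $\mathbb P_{X_\sigma}(\tau_{\mathcal D^c}>s-\sigma)$ between $f_0(s-\sigma)\pm\varepsilon_3$. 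Since $\sigma\in[0,s_2]$ and $f_0$ is non-increasing, this is further sandwiched between $f_0(s)-\varepsilon_3$ and $f_0(s-s_2)+\varepsilon_3$.

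For the upper bound I will decompose $\mathbb P_y(\tau_{\mathcal D^c}>s)$ along the three events $\{\sigma>s_2\}$, $\{\sigma\le s_2,X_\sigma\in\mathcal D^c\}$ and $\{\sigma\le s_2,X_\sigma\in\mathcal K\}$. The first contributes at most $\varepsilon_2$; the second is incompatible with $\tau_{\mathcal D^c}>s$, since $X_\sigma\in\mathcal D^c$ forces $\tau_{\mathcal D^c}=\sigma\le s_2\le s$; and the third is bounded by $f_0(s-s_2)+\varepsilon_3$ through the sandwich above. Summing yields $\mathbb P_y(\tau_{\mathcal D^c}>s)\le f_0(s-s_2)+\varepsilon_2+\varepsilon_3$ for every $y\in\mathcal D$, and integrating in $y=X_t$ against $\mathbb P_{x_0}(\,\cdot\,\cap\{\tau_{\mathcal D^c}>t\})$ delivers the right-hand inequality of the lemma directly.

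For the lower bound I will restrict to the favourable event $\{\sigma\le s_2,X_\sigma\in\mathcal K\}$ and use the lower half of the sandwich together with $f_0(s-\sigma)\ge f_0(s)$, which gives $\mathbb P_y(\tau_{\mathcal D^c}>s)\ge\mathbb P_y(\sigma\le s_2,X_\sigma\in\mathcal K)(f_0(s)-\varepsilon_3)$. The inclusion $\{\sigma\le s_2,X_\sigma\in\mathcal D^c\}\subset\{\tau_{\mathcal D^c}\le s_2\}$ then bounds the prefactor below by $1-\varepsilon_2-\mathbb P_y(\tau_{\mathcal D^c}\le s_2)$, so that $\mathbb P_y(\tau_{\mathcal D^c}>s)\ge f_0(s)-\varepsilon_2-\varepsilon_3-\mathbb P_y(\tau_{\mathcal D^c}\le s_2)$ after expanding and using $f_0(s)-\varepsilon_3\le 1$. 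Averaging in $y=X_t$ and recognising that $\mathbb E_{x_0}[\mathbf 1_{\tau_{\mathcal D^c}>t}\mathbb P_{X_t}(\tau_{\mathcal D^c}\le s_2)]=f_0(t)-f_0(t+s_2)$ yields the claimed inequality modulo this additional correction. The main obstacle is precisely to absorb this correction cleanly into the stated form $f_0(t)(f_0(s)-\varepsilon_2-\varepsilon_3)$: since the straightforward use of $f_0(t+s_2)\le f_0(t)$ goes the wrong way, the final rearrangement must exploit the sign of $f_0(s)-\varepsilon_3$ and treat separately the trivial regime $f_0(s)\le\varepsilon_2+\varepsilon_3$ in which the lower bound is nonpositive.
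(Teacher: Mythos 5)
Your decomposition is essentially the paper's own. The paper works with the event $A_t=\{\forall u\in[t,t+s_2],\ X_u\notin\mathcal K\}$ and the hitting time $H=\inf\{u>t:X_u\in\mathcal K\}$, while you first apply the Markov property at time $t$ and then use $\sigma=\tau_{\mathcal K}\wedge\tau_{\mathcal D^c}$ for the shifted process; these are two equivalent bookkeepings of the same strong Markov argument. Your upper bound reproduces the paper's and is correct.

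The correction term you flag in the lower bound is, however, a genuine gap in your proposal, and it is not an artefact of your particular route: it is exactly what is glossed over in the paper's own argument. Your computation in fact delivers
\[f_0(t+s)\ \ge\ \bigl(f_0(t+s_2)-f_0(t)\,\varepsilon_2\bigr)\bigl(f_0(s)-\varepsilon_3\bigr)\,,\]
which is strictly weaker than $f_0(t)\bigl(f_0(s)-\varepsilon_2-\varepsilon_3\bigr)$ because $f_0(t+s_2)\le f_0(t)$; the two agree only when $f_0(t)=f_0(t+s_2)$. Restricting to the regime $f_0(s)\le \varepsilon_2+\varepsilon_3$, where the claimed lower bound is nonpositive, does not help in the interesting complementary regime, and I do not see how the sign of $f_0(s)-\varepsilon_3$ alone disposes of the additive correction $f_0(t)-f_0(t+s_2)$. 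For comparison, the paper's proof of the lower bound asserts $\mathbb P_{x_0}\bigl(\tau_{\mathcal D^c}>t+s_2,\ \overline{A_t}\mid\tau_{\mathcal D^c}>t\bigr)\ge 1-\varepsilon_2$, but given $\tau_{\mathcal D^c}>t$ the complement of this event is $\{\tau_{\mathcal D^c}\le t+s_2\}\cup\{A_t,\ \tau_{\mathcal D^c}>t+s_2\}$, and condition~\eqref{eqdef:generalexit11} only controls the conditional probability of the second piece; the first piece contributes precisely $1-f_0(t+s_2)/f_0(t)$, i.e.\ the same term you identified. So the obstacle you hit is real and is present, unacknowledged, in the paper. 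Closing the lower bound as stated would appear to require either an additional a priori estimate on $1-f_0(t+s_2)/f_0(t)$, which does not follow directly from \eqref{eqdef:generalexit3}--\eqref{eqdef:generalexit1}, or a reformulation of the lemma with the weaker conclusion displayed above.
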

\begin{proof}
 For $t\geqslant 0$, consider the event
\[A_t \ = \ \{\forall s\in [t ,t +s_2],\ X_s \notin \mathcal K\}\,.\]
By the Markov property, conditioning on the value of $X_{t }$, for all $t>0$,
\begin{eqnarray*}
\mathbb P_{x_0}\po A_t\,,\ \tau_{\mathcal D^c}  > t   + s_2\ |\ \tau_{\mathcal D^c} > t   \pf & \leqslant & \varepsilon_2\,,
\end{eqnarray*}
and thus for all $s\geqslant   s_2 $,
\begin{eqnarray*}
\mathbb P_{x_0}\po A_t\,,\ \tau_{\mathcal D^c} > t+s   \pf & \leqslant & \varepsilon_2 f_0(t)\,.
\end{eqnarray*}
Similarly, by the strong Markov property, for all $s\geqslant  s_2 $, by conditioning on $X_H$ where $H=\inf\{u>t ,\ X_u\in\mathcal K\}$,
\begin{eqnarray*}
\mathbb P_{x_0}\po \tau_{\mathcal D^c} > s+t \,, \ \overline{A_t}\pf & \leqslant & f_0(t) \sup_{y\in\mathcal K} \mathbb P_{y}\po  \tau_{\mathcal D^c} > s   -s_2 \pf \\
& \leqslant & f_0(t) \po f_0\po s -  s_2 \pf + \varepsilon_3\pf
\end{eqnarray*}
 and
\begin{eqnarray*}
\mathbb P_{x_0}\po \tau_{\mathcal D^c} >  s+t \,, \ \overline{A_t}\pf  & \geqslant & f_0(t) \mathbb P_{x_0} \po \tau_{\mathcal D^c}  > t  + s_2,\  \overline{A_t} |\tau_{\mathcal D^c}  > t \pf \inf_{y\in\mathcal K} \mathbb P_{y}\po  \tau_{\mathcal D^c} > s     \pf \\
& \geqslant & f_0(t) \po 1 - \varepsilon_2 \pf\po f_0\po s \pf - \varepsilon_3\pf\\
& \geqslant & f_0(t)  \po f_0\po s \pf - \varepsilon_2 - \varepsilon_3\pf\,.
\end{eqnarray*}
\end{proof}

\begin{proof}[Proof of  Lemma~\ref{lem:beta-exitgene}]
First, $f_0(0)=1$ and, since  $\tau_{\mathcal D^c}$ is $\mathbb P_{x_0}$-almost surely finite, necessarily $f_0(t)$ vanishes as $t\rightarrow +\infty$. Applying Lemma~\ref{lem:exit_gen1} with $s=s_2$ and using that $f_0$ is a non-increasing function we get that for all $t\geqslant 0$
\[f_0(t) \ \geqslant \ f_0(t+s_2) \ \geqslant \ f_0(t) \po f_0(s_2) - \varepsilon_2-\varepsilon_3\pf \ \geqslant \ f_0(t) \po 1 - \varepsilon_1-\varepsilon_2-\varepsilon_3\pf \]
where for the last inequality we used \eqref{eqdef:generalexit3}  and the fact $s_2\leqslant s_1$. Thus, if $\varepsilon_1+\varepsilon_2+\varepsilon_3\leqslant 1/2$, we get that for all $t\geqslant 0$,
\[|f_0(t+s_2) - f_0(t)| \ \leqslant \ \frac12 f_0(t) \ \leqslant \ \frac12\,.\]
In particular any interval of $[0,1]$ with length at least $1/2$ intersects $\{f_0(t),\ t\geqslant 0\}$, which concludes.
\end{proof}

From now on we work under Assumption~\ref{hyp:exit}, we take $\beta $ as in Lemma \ref{lem:beta-exitgene}, consider $C,\delta,M>0$ such that \eqref{eq:cond-general} holds, and write $\varepsilon = \varepsilon_2+\varepsilon_3$ and  $s_0 = -\ln \mathbb P_{x_0}(\tau_{\mathcal D^c}>\beta)$. For $t\geqslant 0$, let
\[f(t) \ = \ \mathbb P_{x_0} \po \tau_{\mathcal D^c}  >  t\beta/s_0\pf\,.\]
By construction, $f$ is a non-increasing function with $f(0)=1$ and $f(s_0) = e^{-s_0}\in[1/4,3/4]$. Moreover,  Lemma~\ref{lem:exit_gen1} states that for all $t\geqslant 0$ and for all $s\geqslant s_2 s_0/\beta$,
\begin{equation}\label{eq:ineq-general}
f(t)  \po f\po s \pf - \varepsilon\pf \ \leqslant \ f(t+s) \ \leqslant \ f(t) \po f\po s - s_2 s_0/\beta\pf + \varepsilon\pf\,.
\end{equation}

\begin{lem}\label{lem:exit_gen2}
Granted Assumption \ref{hyp:exit} and \eqref{eq:cond-general},  there exists $C'>0$ that depends only on $C,\delta$ (and not on $M$) such that 
\[ \sup_{t\in[0,M]} | f(t) - e^{-t}| \ \leqslant \ C'  (1+M) e^{-\delta M/3}\,. \]
\end{lem}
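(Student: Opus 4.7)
The plan is to exploit the approximate multiplicative relation of Lemma~\ref{lem:exit_gen1} together with the pinning $f(s_0) = e^{-s_0}$ to force $f(t) \approx e^{-t}$. The first step is to control $f$ at the scale $\eta$: I would show that $|f(\eta) - e^{-\eta}| \leqslant C_1 e^{-\delta M}$. This is not automatic from the assumptions (in particular $\varepsilon_1$ is not assumed small), but can be extracted from Lemma~\ref{lem:exit_gen1} itself. Iterating the lower inequality with $s = \eta$ gives $f(k\eta) \geqslant (f(\eta) - \varepsilon)^k$; choosing $k = \lfloor s_0/\eta\rfloor + 1$ so that $k\eta \geqslant s_0$, monotonicity yields $f(k\eta) \leqslant e^{-s_0}$ and hence $f(\eta) \leqslant e^{-s_0/k} + \varepsilon \leqslant e^{-\eta}(1+O(\eta^2/s_0)) + \varepsilon$, which is the upper bound. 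The matching lower bound comes symmetrically from the upper inequality applied with $s = 2\eta$ (for which the shift $s - \eta = \eta$ is admissible), giving $f(2j\eta) \leqslant (f(\eta) + \varepsilon)^j$ and then choosing $j \approx s_0/(2\eta)$.

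Next, plugging $f(\eta) \geqslant e^{-\eta} - C_1 e^{-\delta M} \geqslant 1 - O(e^{-\delta M})$ into $f(s) \geqslant f(s-\eta)(f(\eta) - \varepsilon)$ yields $f(s-\eta) \leqslant f(s)(1 + C_2 e^{-\delta M})$ for every $s \geqslant \eta$, which upgrades Lemma~\ref{lem:exit_gen1} to the symmetric estimate
\[
 | f(t+s) - f(t) f(s) | \ \leqslant \ C_3 e^{-\delta M} f(t) \qquad \text{for } t \geqslant 0,\ s \geqslant \eta.
\]
Iterating this on $a_k := f(ks_0)$ with $s = s_0$ gives $a_{k+1} = a_k(e^{-s_0} + O(e^{-\delta M}))$, hence by induction $a_k = e^{-ks_0}(1 + O(ke^{-\delta M}))$. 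For $ks_0 \leqslant M$ the accumulated multiplicative error is $O(M e^{-\delta M})$, already comfortably smaller than the target $(1+M)e^{-\delta M/3}$.

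The main obstacle is the interpolation from the grid $\{ks_0\}$ to an arbitrary $t \in [0,M]$: since $s_0 \in [\ln(4/3), \ln 4]$ is bounded below, monotonicity alone leaves a gap of order $1 - e^{-s_0}$ (which is $O(1)$, not small) between $f(ks_0)$ and $f((k+1)s_0)$. I would apply the cleaned multiplicativity once more, with $t = ks_0$ and $s = r \in [\eta, s_0]$, to reduce $f(ks_0 + r)$ to $f(ks_0) f(r)$, and then handle $f(r)$ for $r \in [0,s_0]$ by bootstrapping: both directions of Lemma~\ref{lem:exit_gen1} with $t = r$, $s = s_0 - r$ yield the bracket
\[
 f(r) \ \in \ \Bigl[\frac{f(s_0)}{f(s_0 - r - \eta) + \varepsilon},\ \frac{f(s_0)}{f(s_0 - r) - \varepsilon}\Bigr],
\]
which couples $f(r)$ and $f(s_0 - r)$. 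I would close this circularity by a dyadic subdivision of $[0, s_0]$, pinning $f$ at the successive midpoints and propagating the estimate of the previous step from level to level. The suboptimal factor $\delta/3$ in the final bound is expected to come from balancing the number of dyadic levels (which amplifies the per-level error by a polynomial-in-$M$ factor) against the accuracy $e^{-\delta M}$ granted by Steps~1--2, together with the $O(\eta)$ shifts that accompany each application of Lemma~\ref{lem:exit_gen1}.
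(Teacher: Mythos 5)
Your proposed proof takes a genuinely different route from the paper's, and the route is plausible but leaves the hardest step as a sketch. You pin $f$ at the finest available scale $\eta := s_0 s_2/\beta \lesssim e^{-\delta M}$, iterate the functional inequality at the coarse $O(1)$ scale $s_0$, and then try to bridge the interpolation gap on $[0,s_0]$ by a dyadic subdivision. The paper instead makes a single, cleverer choice of grid scale: it pins $f$ at $u_0 := s_0/k_0$ with $k_0 = \lfloor e^{\delta M/3}/\sqrt{C}\rfloor$. That scale is tuned so that, on the one hand, $u_0$ is already $O(e^{-\delta M/3})$, so monotonicity alone interpolates within one grid cell to the required accuracy (no dyadic construction is needed), and, on the other hand, the number of forward iterations $k \leqslant \lceil M/u_0\rceil \sim M e^{\delta M/3}$ is small enough that the per-step error $|f(u_0)-e^{-u_0}| = O(e^{-2\delta M/3})$, obtained by iterating Lemma~\ref{lem:exit_gen1} $k_0$ times, accumulates only to $O(M e^{-\delta M/3})$. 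This is exactly the same one-shot ``$k$-th root'' idea you use for $f(\eta)$, just applied at the right intermediate scale, and it eliminates the interpolation problem entirely.

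The steps you do work out (pinning $f(\eta)$, upgrading Lemma~\ref{lem:exit_gen1} to the two-sided estimate $|f(t+s)-f(t)f(s)| \leqslant C_3 e^{-\delta M} f(t)$, iterating on $\{ks_0\}$) are correct. The gap is the interpolation over $[0,s_0]$, which you yourself flag as ``the main obstacle''. The dyadic scheme you sketch can plausibly be pushed through, but two things need care and are not addressed: the bracket you write genuinely couples $f(r)$ and $f(s_0-r)$ and the circularity has to be broken carefully (typically by taking square roots at the midpoint, then translating); and your heuristic that the $\delta/3$ loss comes from a ``polynomial-in-$M$ factor'' per level does not match the actual error budget, since reaching the target resolution $e^{-\delta M/3}$ requires $L\sim\delta M/(3\ln 2)$ levels and hence $\sim e^{\delta M/3}$ dyadic cells, an exponential number, so the per-cell errors $O(\varepsilon)$ compound to $O(e^{\delta M/3}\varepsilon)=O(e^{-2\delta M/3})$ (which is acceptable, but for the right reason). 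In short: nothing you say is wrong, but the crucial interpolation step is unworked, whereas the paper's choice of $u_0$ renders that step trivial — I'd recommend adopting it.
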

\begin{proof}
The left-hand side is bounded by 2 so that, for fixed values of $C,\delta$, it is sufficient to prove the result for $M$ large enough.  For all $k\in\llbracket 1,\lfloor \beta/s_2\rfloor\rrbracket$, iterating the inequalities \eqref{eq:ineq-general} with $s=s_0/k$, 
\begin{align*}
 e^{-s_0} \ = \ f(s_0) \ & \leqslant \ f\po s_0 - s_0/k \pf \po f\po s_0/k - s_0s_2/\beta\pf + \varepsilon\pf\\
 &  \leqslant \ f\po s_0 - 2s_0/k \pf \po f\po s_0/k - s_0s_2/\beta\pf + \varepsilon\pf^2\\
 & \leqslant \dots  \leqslant \ \po f\po s_0/k - s_0s_2/\beta\pf + \varepsilon\pf^{k}\,,
\end{align*}
 from which
 \begin{equation}\label{eq:demo1gene}
 f\po s_0/k - s_0s_2/\beta\pf  \ \geqslant \ e^{-s_0/k} - \varepsilon\,,
 \end{equation}
 and similarly, provided $f(s_0/k)\geqslant \varepsilon$,
 \begin{align*}
 e^{-s_0} \ = \  f(s_0) \ & \geqslant \  f\po s_0 - s_0/k \pf \po f\po s_0/k \pf-\varepsilon\pf\\
 & \geqslant \ f\po s_0 - 2s_0/k \pf \po f\po s_0/k \pf-\varepsilon\pf^2\\
  & \geqslant \dots \geqslant \  \po f\po s_0/k \pf - \varepsilon\pf^{k}\,,
 \end{align*}
such that
 \begin{equation}\label{eq:demo2gene}
 f\po s_0/k \pf  \ \leqslant  \ \varepsilon + e^{-s_0/k}\,.
 \end{equation}
Besides, the latter inequality is trivial if $f(s_0/k)\leqslant \varepsilon$.

Set $k_0 = \lfloor e^{\delta M/3}/\sqrt C\rfloor$. For $M$ large enough,  $2\leqslant k_0 < \beta/s_2$ and $1/(k_0) \leqslant 1/(k_0-1) -s_2/\beta$. Denoting $u_0 = s_0/k_0$, using first \eqref{eq:demo1gene} with $k=k_0-1$, then the monotonicity of $f$,  and finally \eqref{eq:demo2gene} with $k=k_0$, we get
\[e^{-u_0 \frac{k_0}{k_0-1}} - \varepsilon \ \leqslant \ f\po s_0/(k_0-1) - s_0s_2/\beta\pf \ \leqslant \ f(u_0) \ \leqslant  \varepsilon + e^{-u_0}\,.\]
In the following, we denote by $C'$ various constants that depend only on the parameters $C,\delta$ of \eqref{eq:cond-general}. Since $t\mapsto e^{-t}$ is $1$-Lipschitz continuous on $\R_+$,
\[|e^{-u_0 \frac{k_0}{k_0-1}} - e^{-u_0}| \ \leqslant \ \frac{u_0}{k_0-1} \ = \ \frac{s_0}{k_0(k_0-1)}\,,\] 
and noticing $s_0 \in [\ln(4/3),\ln(4)]$, we get
\[|f(u_0) - e^{-u_0}| \ \leqslant \ C'e^{-2\delta M/3}\]
for some $C'$. By monotonicity again,
\[G(u_0) \ :=\ \sup_{s\in[0,u_0]}|e^{-s} - f(s)| \ \leqslant \ |1-f(u_0)|\vee |e^{-u_0}-1| \ \leqslant \ C'e^{- \delta M/3}\,\]
for some $C'$. Remark that, in particular, we can suppose $M$ large enough so that $f(u_0) \geqslant 1/2 \geqslant \varepsilon$.

 Now, for some $C'$, uniformly in $k\in\llbracket 1,\lceil M/u_0\rceil \rrbracket$ and $r\in[0,u_0]$, using again Lemma \ref{lem:exit_gen1} (more specifically \eqref{eq:ineq-general}),
\[f(k u_0+r) \ \geqslant \ f(r) \po f(u_0) - \varepsilon\pf^k \ \geqslant \ e^{-ku_0-r} - C'(1+M) e^{- \delta M/3}\]
(using that $(1-x)^k \geqslant 1 - 2xk$ for $x$ positive small enough, uniformly in $k$). Similarly, for such $k,r$, 
\[ f\po k(u_0+s_0s_2/\beta) + r\pf  \ \leqslant \ f(r) \po f(u_0) + \varepsilon \pf^{k} \ \leqslant \ e^{-ku_0-r} + C'(1+M) e^{- \delta  M/3}\,,\]
and
\begin{eqnarray*}
f\po k(u_0+s_0s_2/\beta) + r\pf  
& \geqslant & f(ku_0+r)   \po f\po k  s_0 s_2 /\beta \pf - \varepsilon \pf \,.
\end{eqnarray*}
Remark that, for all $k\in\llbracket 1,\lceil M/u_0\rceil \rrbracket$,
\[ k  s_0 s_2 /\beta \ \leqslant\  C'(1+M) e^{-2\delta  M/3} \ \leqslant \ u_0  \]
provided $M$ is large enough (where we used that $s_0\geqslant \ln (4/3)$). In particular,
 \[f\po k  s_0 s_2 /\beta \pf \ \geqslant \  e^{-k  s_0 s_2 /\beta } - G(u_0) \ \geqslant \ 1 - C'e^{-\delta M/3} \]
for some $C'$. Combining the last inequalities, we have obtained, uniformly in $k\in\llbracket 1,\lceil M/u_0\rceil \rrbracket$ and $r\in[0,u_0]$,
\[| f(ku_0+r) - e^{-ku_0-r}| \ \leqslant \ C'(1+M) e^{-\delta M/3} \]
for some $C'$ that depends only on $C,\delta$, which concludes.

\end{proof}

\begin{lem}\label{lem:exit_gen3}
Granted Assumption \ref{hyp:exit} and \eqref{eq:cond-general}, for  $M$ large enough (depending on $\delta,C$), the following holds. For all $y\in\mathcal D$ and all $t\geqslant 0$,
\[\mathbb P_y \po \tau_{\mathcal D^c}  > t \beta/s_0 \pf  \ \leqslant \ 2e^{-t/2}\]
and
\[\mathbb E_y\po \tau_{\mathcal D^c} \pf \ \leqslant \ \frac{4\beta}{s_0}\,.\]
\end{lem}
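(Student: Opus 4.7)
The plan is to derive a uniform one-window bound
\[q \ :=\ \sup_{z\in\mathcal D}\mathbb P_z\po \tau_{\mathcal D^c} > \beta\pf \ \leqslant \ e^{-s_0/2}\]
and then iterate the Markov property at the deterministic times $\beta,2\beta,\dots$ Once this bound is in hand, everything reduces to straightforward monotonicity and integration arguments.

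For the one-window bound, I would start from an arbitrary $z\in\mathcal D$ and apply the strong Markov property at the stopping time $T = \tau_{\mathcal K} \wedge s_2 \wedge \tau_{\mathcal D^c}$. On the event $\{T = s_2 < \tau_{\mathcal K}\wedge\tau_{\mathcal D^c}\}$, condition~\eqref{eqdef:generalexit11} contributes at most $\varepsilon_2$; on $\{T = \tau_{\mathcal D^c}\}$ the process has already left $\mathcal D$ before time $\beta$; on $\{T=\tau_{\mathcal K}\}$, $X_T\in\mathcal K$ and the remaining time to the deadline $\beta$ exceeds $\beta-s_2$, so by monotonicity and condition~\eqref{eqdef:generalexit1} one can compare with the reference point $x_0$. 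Altogether,
\[q \ \leqslant \ \mathbb P_{x_0}\po \tau_{\mathcal D^c} > \beta-s_2\pf + \varepsilon_2 + \varepsilon_3 \ = \ f\bigl(s_0(1-s_2/\beta)\bigr) + \varepsilon_2 + \varepsilon_3\,.\]
Combining Lemma~\ref{lem:exit_gen2} with the bound $s_2/\beta\leqslant Ce^{-\delta M}$ from \eqref{eq:cond-general} and a first-order expansion of $e^{-s_0(1-s_2/\beta)}$ around $e^{-s_0}$ yields $q\leqslant e^{-s_0} + O\bigl((1+M)e^{-\delta M/3}\bigr)$. Since $s_0\in[\ln(4/3),\ln 4]$, the gap $e^{-s_0/2}-e^{-s_0}=e^{-s_0/2}(1-e^{-s_0/2})$ is bounded below by a positive absolute constant (one gets at least $\sqrt 3/2 - 3/4 > 0.1$), so taking $M$ large enough, depending only on $C$ and $\delta$, absorbs the remainder and gives $q\leqslant e^{-s_0/2}$.

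Next, iterating the Markov property at the times $k\beta$ (using that $X_{k\beta}\in\mathcal D$ on the event $\{\tau_{\mathcal D^c}>k\beta\}$, so the one-window bound $q$ applies at each step) gives $\mathbb P_y(\tau_{\mathcal D^c}>N\beta)\leqslant q^N\leqslant e^{-Ns_0/2}$ for all $y\in\mathcal D$ and $N\in\mathbb N$. For an arbitrary $t\geqslant 0$, setting $N=\lfloor t/s_0\rfloor$ ensures both $N\beta\leqslant t\beta/s_0$ and $Ns_0\geqslant t-s_0\geqslant t-\ln 4$, so by monotonicity of the tail,
\[\mathbb P_y\po \tau_{\mathcal D^c}>t\beta/s_0\pf \ \leqslant \ e^{-Ns_0/2} \ \leqslant \ e^{(\ln 4 -t)/2} \ = \ 2e^{-t/2}\,.\]
The expectation bound then follows by the change of variable $u=t\beta/s_0$ in $\mathbb E_y(\tau_{\mathcal D^c})=\int_0^\infty\mathbb P_y(\tau_{\mathcal D^c}>u)du$, which gives $\mathbb E_y(\tau_{\mathcal D^c})\leqslant (\beta/s_0)\int_0^\infty 2e^{-t/2}dt = 4\beta/s_0$.

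The main obstacle is really the first step: securing the strict inequality $q\leqslant e^{-s_0/2}$ with enough room to absorb the error terms from Lemma~\ref{lem:exit_gen2} and the $\varepsilon_i$'s. This is precisely why Lemma~\ref{lem:beta-exitgene} was stated on the window $[1/4,3/4]$ rather than at a single value such as $e^{-1}$: the uniform lower bound $s_0\geqslant \ln(4/3)$ forces a concavity gap between $e^{-s_0}$ and $e^{-s_0/2}$ that is bounded below independently of $\beta$ and $M$, so the $O((1+M)e^{-\delta M/3})$ correction can always be defeated by taking $M$ large enough depending only on $C$ and $\delta$.
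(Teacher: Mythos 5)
Your proof is correct and follows essentially the same strategy as the paper's: a one-window bound obtained by decomposing at $\tau_{\mathcal K}\wedge s_2\wedge\tau_{\mathcal D^c}$ (using conditions \eqref{eqdef:generalexit11} and \eqref{eqdef:generalexit1} together with Lemma~\ref{lem:exit_gen2} to control the reference tail $f$), then iteration of the Markov property and integration. The only cosmetic difference is the choice of window: you establish $\sup_{z\in\mathcal D}\mathbb P_z(\tau_{\mathcal D^c}>\beta)\leqslant e^{-s_0/2}$ and iterate at times $k\beta$, whereas the paper establishes $g(1)\leqslant e^{-1/2}$ at the rescaled unit window $\beta/s_0$ and iterates in rescaled time, both yielding the same $2e^{-t/2}$ bound.
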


\begin{proof}

Denote
\[g(t) \ = \ \sup_{y\in\mathcal D} \mathbb P_{y}\po  \tau_{\mathcal D^c}  > t \beta/s_0     \pf\,. \]
The Markov property ensures that $g(t+s)\leqslant g(t) g(s)$ for all $s,t\geqslant 0$. In particular $g$ is non-increasing, and goes exponentially fast toward zero if there exist $t\geqslant 0$ such that $g(t) <1$. Following the same reasoning as for Lemma~\ref{lem:exit_gen1} (in particular considering the same event $A_t$), we see that for all $t\geqslant 0$, all $s\geqslant s_0 s_2 /\beta$ and all $x\in\mathcal D$,
\begin{eqnarray*}
\mathbb P_x\po \tau_{\mathcal D^c}  > (t+s)\beta/s_0\pf & \leqslant & \mathbb P_x(A_t, \tau_{\mathcal D^c}  > (t+s)\beta/s_0 ) + \sup_{y\in\mathcal K}\mathbb P_y\po \tau_{\mathcal D^c}  > s\beta/s_0 - s_2\pf\\
& \leqslant & \varepsilon_2 + \varepsilon_3 + f\po s-s_0s_2/\beta\pf\,.
\end{eqnarray*}
Under Assumption \ref{hyp:exit}, $s_0 s_2 /\beta$ vanishes as $M\rightarrow +\infty.$ So, for $M$ large enough, we can apply the previous inequality with $s=1$ and $t=0$ to get that 
\begin{eqnarray*}
g(1) &  \leqslant &  \varepsilon_2+\varepsilon_3 + f\po 1-s_0s_2/\beta\pf \\
& \leqslant &  C e^{-\delta M} + e^{-1+s_0s_2/\beta} + C'  (1+M) e^{- \delta M/3}
\end{eqnarray*}
with $C'$ given by Lemma~\ref{lem:exit_gen2}. As a consequence, for $M$ large enough, $g(1) \leqslant e^{-1/2}$, and  for all $y\in\mathcal D$ and all $t\geqslant 0$,
\[\mathbb P_y \po \tau_{\mathcal D^c} > t\beta/s_0 \pf  \ \leqslant \ g(\lfloor t\rfloor) \ \leqslant \ e^{-\lfloor t\rfloor/2}\ \leqslant \ 2 e^{-t/2}\,.\]
Finally,
\[\frac{s_0}{\beta}\mathbb E_y\po \tau_{\mathcal D^c} \pf \ = \ \int_0^{+\infty}  \mathbb P_y\po \tau_{\mathcal D^c}  > t\beta/s_0\pf dt \ \leqslant \ 4\,. \]
\end{proof}

\begin{prop}\label{Prop:exit_gene}
Granted Assumption \ref{hyp:exit} and \eqref{eq:cond-general}, there exists $C'>0$ (that depends only on $C,\delta$ but not on $M$) such that for all $M$ large enough the following holds. For all probability measures $\nu$ on $\mathcal D$,
\[\sup_{t\geqslant 0} |\mathbb P_{\nu}\po \tau_{\mathcal D^c} > t\beta/s_0\pf - e^{-t}| \ \leqslant \ \mathbb P_\nu\po \tau_{\mathcal D^c} \leqslant s_2\pf +  C'  (1+M) e^{-\min(\delta/3,1/2)M} \]
and
\[\left|\frac{s_0}{\beta} \mathbb E_{\nu}\po \tau_{\mathcal D^c} \pf - 1 \right| \ \leqslant \ M \mathbb P_\nu\po \tau_{\mathcal D^c} \leqslant  s_2\pf +  C'  (1+M^2) e^{-\min(\delta/3,1/2)M}\,. \]
\end{prop}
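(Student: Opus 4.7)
The plan is to estimate $g_\nu(T) := \mathbb P_\nu(\tau_{\mathcal D^c} > T)$ separately on three regimes in $T$, combine these into the first displayed bound, and then integrate to obtain the expectation bound. Write $T = t\beta/s_0$. In the short-time regime $T \leqslant s_2$, both $g_\nu(T)$ and $e^{-t}$ are close to $1$: one has $g_\nu(T) \in [1 - \mathbb P_\nu(\tau_{\mathcal D^c} \leqslant s_2),1]$ and $e^{-t} \in [e^{-s_0 s_2/\beta},1]$, hence
\[|g_\nu(T) - e^{-t}| \ \leqslant \ \mathbb P_\nu(\tau_{\mathcal D^c} \leqslant s_2) + s_0 s_2/\beta\,.\]
Since $s_2/\beta \leqslant C e^{-\delta M}$ by \eqref{eq:cond-general} and $s_0 \leqslant \ln 4$ (as noted in the proof of Lemma~\ref{lem:exit_gen2}), the second term is already of the claimed order.

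The core step is the bulk regime $s_2 < T \leqslant M\beta/s_0$, for which I would apply the strong Markov property at $\sigma := \tau_{\mathcal D^c} \wedge \tau_{\mathcal K}$. On the good event $\{\sigma \leqslant s_2\} \cap \{\tau_{\mathcal D^c} > \sigma\}$, one has $X_\sigma \in \mathcal K$, so by \eqref{eqdef:generalexit1} the quantity $\mathbb P_{X_\sigma}(\tau_{\mathcal D^c} > T-\sigma)$ equals $f(s_0(T-\sigma)/\beta)$ up to $\varepsilon_3$; by Lemma~\ref{lem:exit_gen2} (applicable since $t \leqslant M$) this is in turn $e^{-t + s_0\sigma/\beta}$ up to $C'(1+M) e^{-\delta M/3}$; and the Lipschitz bound $|e^{-t+s_0\sigma/\beta} - e^{-t}| \leqslant s_0 s_2/\beta$ allows replacement by $e^{-t}$. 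The complementary events $\{\sigma > s_2\}$ (mass $\leqslant \varepsilon_2$ by \eqref{eqdef:generalexit11}) and $\{\tau_{\mathcal D^c} \leqslant s_2\}$ (mass $\leqslant \mathbb P_\nu(\tau_{\mathcal D^c} \leqslant s_2)$) contribute only additively. Putting everything together and absorbing $\varepsilon_2+\varepsilon_3$ via \eqref{eq:cond-general}, one obtains in this regime $|g_\nu(T) - e^{-t}| \leqslant \mathbb P_\nu(\tau_{\mathcal D^c} \leqslant s_2) + C'(1+M) e^{-\delta M/3}$.

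In the tail regime $t > M$, Lemma~\ref{lem:exit_gen3} yields $g_\nu(T) \leqslant 2 e^{-t/2}$ while $e^{-t} \leqslant e^{-M}$, hence $|g_\nu(T) - e^{-t}| \leqslant 3 e^{-M/2}$, which is the source of the factor $1/2$ in $\min(\delta/3,1/2)$. For the expectation bound I would write
\[\frac{s_0}{\beta} \mathbb E_\nu(\tau_{\mathcal D^c}) - 1 \ = \ \int_0^\infty \po g_\nu(t\beta/s_0) - e^{-t}\pf dt\]
and split at $t = M$: the integral over $[0,M]$ is bounded by $M$ times the uniform bound established above, producing $M\, \mathbb P_\nu(\tau_{\mathcal D^c} \leqslant s_2) + C' M(1+M) e^{-\delta M/3}$, while the integral over $[M,\infty)$ is at most $\int_M^\infty (2 e^{-t/2} + e^{-t})\, dt \leqslant 5 e^{-M/2}$. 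Summing the two pieces gives the second claimed inequality.

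The main obstacle will be the careful bookkeeping in the bulk regime: decoupling ``entering $\mathcal K$ before $s_2$'' from ``exiting $\mathcal D$ after time $T$'' via the strong Markov, so that the small errors $\varepsilon_2, \varepsilon_3$ enter additively once rather than iteratively as in Lemma~\ref{lem:exit_gen1}. This relies on the key fact that under \eqref{eq:cond-general} the length $\sigma \in [0,s_2]$ is much smaller than the rescaling $\beta/s_0$, so that the oscillation of $f$ over this interval is negligible by Lemma~\ref{lem:exit_gen2}. Once this decomposition is in place, the rest is a direct combination of Lemmas~\ref{lem:exit_gen2} and~\ref{lem:exit_gen3}.
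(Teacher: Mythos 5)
Your proposal is correct and follows essentially the same route as the paper's proof: the same three-regime split (short times $T\leqslant s_2$ by monotonicity, the bulk $s_2<T\leqslant M\beta/s_0$ by a strong Markov step decoupling the fast return to $\mathcal K$ from the subsequent exit and invoking Lemma~\ref{lem:exit_gen2}, and the tail $t>M$ by Lemma~\ref{lem:exit_gen3}), followed by integration split at $t=M$ for the expectation bound. Your stopping time $\sigma=\tau_{\mathcal D^c}\wedge\tau_{\mathcal K}$ plays exactly the role of the paper's event $A_0=\{\forall s\in[0,s_2],\ X_s\notin\mathcal K\}$.
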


\begin{proof}
By conditioning on the initial condition, it is sufficient to prove the result with $\nu=\delta_y$ for any fixed $y\in \mathcal D$. First, for $t\geqslant M$, we simply apply Lemma~\ref{lem:exit_gen3} to get that
\[ |\mathbb P_{y}\po \tau_{\mathcal D^c} > t\beta/s_0\pf - e^{-t}| \ \leqslant \  \ 3 e^{-M/2}\,.\]
Second, for $t\leqslant  s_2 s_0/\beta$, by monotonicity,
\begin{eqnarray*}
|\mathbb P_{y}\po \tau_{\mathcal D^c} > t\beta/s_0\pf - e^{-t}|  & \leqslant & |1 - \mathbb P_{y}\po \tau_{\mathcal D^c} >  s_2\pf | \vee |1 - e^{-s_2 s_0/\beta}|\\
& \leqslant & \mathbb P_{y}\po \tau_{\mathcal D^c} \leqslant   s_2\pf  + C e^{- \delta  M}\,.
\end{eqnarray*}
Third, for $t \in [  s_2 s_0/\beta,M]$, similarly to the proof of Lemma~\ref{lem:exit_gen1} we consider the event $A_0 = \{\forall s\in [0,s_2],\ X_s \notin \mathcal K\}$ and bound
\begin{eqnarray*}
\mathbb P_{y} \po \tau_{\mathcal D^c} >  t\beta/s_0\pf  & \leqslant & \mathbb P_{y} \po \tau_{\mathcal D^c} >  s_2\,,\ A_0\pf  + \mathbb P_{y} \po \tau_{\mathcal D^c} >  t\beta/s_0\ , \ \overline{A_0}\pf \\
& \leqslant & \varepsilon_2 + \sup_{z\in \mathcal K} \mathbb P_{z} \po \tau_{\mathcal D^c} >  t\beta/s_0 -s_2\pf \\
& \leqslant&   \varepsilon_2 + \varepsilon_3 + f(t-s_2s_0/\beta) \ \leqslant \ e^{-t} + C'  (1+M) e^{- \delta M/3}
\end{eqnarray*}
for $M$ large enough so that $\beta\geqslant s_2s_0$ (and thus $|e^{s_2s_0/\beta}-1|\leqslant e s_2s_0/\beta$). Conversely,
\begin{eqnarray*}
\mathbb P_{y} \po \tau_{\mathcal D^c} >  t\beta/s_0\pf  & \geqslant & \mathbb P_{y} \po    \overline{A_0}\pf   \mathbb P_{y} \po \tau_{\mathcal D^c} >  t\beta/s_0\ |\ \overline{A_0}\pf \\
& \geqslant &    \mathbb P_{y} \po \tau_{\mathcal D^c} >  t\beta/s_0\ |\ \overline{A_0}\pf  - \mathbb P_{y} \po    A_0 \pf\\
& \geqslant &   f(t) - \varepsilon_2 - \varepsilon_3 - \mathbb P_{y} \po \tau_{\mathcal D^c} \leqslant s_2 \pf  \\
& \geqslant & e^{-t} -  C'  (1+M) e^{- \delta M/3} - \mathbb P_{y} \po \tau_{\mathcal D^c} \leqslant s_2 \pf \,.
\end{eqnarray*}
This concludes the proof of the first statement. For the second one,
\begin{eqnarray*}
\left|\frac{s_0}{\beta} \mathbb E_{\nu}\po \tau_{\mathcal D^c} \pf - 1 \right| 
 & = & \left| \int_{0}^{+\infty} \mathbb P_{\nu}\po \tau_{\mathcal D^c} >t \beta/s_0 \pf dt - 1 \right| \\
 & \leqslant &  \int_{0}^{M} \left|\mathbb P_{\nu}\po \tau_{\mathcal D^c} >t \beta/s_0 \pf - e^{-t} \right|   dt + e^{-M} + \int_{M}^{+\infty}  \mathbb P_{\nu}\po \tau_{\mathcal D^c} >t \pf dt\,.
\end{eqnarray*}
The first statement and Lemma~\ref{lem:exit_gen3}  conclude.

\end{proof}

We are now ready to prove the main result of this section.

\begin{proof}[Proof of Theorem~\ref{thm:gene-exit}]
The first claim has already been established in Lemma~\ref{lem:exit_gen3}. Denote $a_i=s_0 \mathbb E_{\nu_i}(\tau_{\mathcal D^c})/\beta $ for $i=1,2$ and consider $M$ large enough so that, from Proposition~\ref{Prop:exit_gene}, $a_1\wedge a_2\geqslant 1/2$. Obviously, 
\begin{eqnarray*}
\sup_{t\geqslant 0} |\mathbb P_{\nu_1}\po \tau_{\mathcal D^c} > t \mathbb E_{\nu_1}\po \tau_{\mathcal D^c} \pf \pf - e^{-t}|  & = & \sup_{t\geqslant 0} |\mathbb P_{\nu_1}\po \tau_{\mathcal D^c} > t \beta /s_0 \pf - e^{-t/a_1}|\\
& \leqslant &  \sup_{t\geqslant 0} \po |\mathbb P_{\nu_1}\po \tau_{\mathcal D^c} > t \beta /s_0 \pf - e^{-t}| + |e^{-t}-e^{-t/a_1}|\pf\,.
\end{eqnarray*}
For $t\leqslant M$, the last term is bounded by $M|1-1/a_1|\leqslant 2M[1-a_1|$, and for $t\geqslant M$, it is bounded by $2e^{-M/2}$. Proposition~\ref{Prop:exit_gene} concludes the proof of \eqref{eq:thm-gene-exit-1}. To prove \eqref{eq:thm-gene-exit-2}, we simply bound
\[\left|\frac{a_1}{a_2}-1\right| \ \leqslant 2  |a_1-1|+ 2 |a_2-1|\]
and conclude again with Proposition~\ref{Prop:exit_gene}.
\end{proof}

Finally, as announced, we finish this section by a general argument to establish \eqref{eqdef:generalexit1}.

\begin{prop}\label{prop:eps1}
Let $s_1\geqslant 0$ and $\varepsilon_1,\varepsilon_4\in[0,1]$ be such that \eqref{eqdef:generalexit3} holds and that for all $x,y\in\mathcal K$ there exists a coupling $(X_t,Y_t)_{t\geqslant 0}$ of two processes with respective initial condition $x$ and $y$ such that
\begin{eqnarray}
\mathbb P\po X_{t} = Y_{t}\ \forall t\geqslant s_1\pf & \geqslant & 1 -\varepsilon_4\,.
\end{eqnarray}
Then the condition~\eqref{eqdef:generalexit1} holds with  $\varepsilon_3 = 2 \varepsilon_1+\varepsilon_4$.
\end{prop}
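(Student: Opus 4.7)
The plan is to use the coupling to compare the two exit time distributions directly. Fix $x,y\in\mathcal K$ and let $(X_t,Y_t)_{t\ge 0}$ be the given coupling starting from $(x,y)$. Write $\tau_X$ and $\tau_Y$ for the respective exit times from $\mathcal D$, and let $E=\{X_t=Y_t\ \forall t\ge s_1\}$, so that $\mathbb P(E^c)\le \varepsilon_4$.

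I would split the argument into the two regimes $t\le s_1$ and $t>s_1$. For $t\le s_1$, the coupling is not used: hypothesis~\eqref{eqdef:generalexit3} gives $\mathbb P(\tau_X>t)\ge \mathbb P(\tau_X>s_1)\ge 1-\varepsilon_1$ and analogously for $Y$, so both probabilities lie in $[1-\varepsilon_1,1]$ and their difference is at most $\varepsilon_1\le 2\varepsilon_1+\varepsilon_4$.

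For $t>s_1$, introduce the good event
\[A\ =\ E\ \cap\ \{\tau_X>s_1\}\ \cap\ \{\tau_Y>s_1\}\,.\]
The key observation is that on $A$, we have $X_s=Y_s$ for every $s\ge s_1$ and both processes remain in $\mathcal D$ up to time $s_1$; hence by pathwise identification after time $s_1$, $\tau_X=\tau_Y$ on $A$. Using a union bound together with \eqref{eqdef:generalexit3} applied to both $x$ and $y$,
\[\mathbb P(A^c)\ \le\ \mathbb P(E^c)+\mathbb P(\tau_X\le s_1)+\mathbb P(\tau_Y\le s_1)\ \le\ 2\varepsilon_1+\varepsilon_4\,.\]
Decomposing $\mathbb P(\tau_X>t)=\mathbb P(\tau_X>t,A)+\mathbb P(\tau_X>t,A^c)$ and similarly for $Y$, the first terms agree thanks to $\tau_X=\tau_Y$ on $A$, so
\[|\mathbb P(\tau_X>t)-\mathbb P(\tau_Y>t)|\ \le\ \mathbb P(A^c)\ \le\ 2\varepsilon_1+\varepsilon_4\,.\]
Taking the supremum over $t\ge 0$ and $x,y\in\mathcal K$ yields \eqref{eqdef:generalexit1}.

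No step is really an obstacle here; the only subtlety is verifying that $\tau_X=\tau_Y$ on $A$, which relies on the fact that both $\{\tau_X>s_1\}$ and $\{\tau_Y>s_1\}$ ensure that the processes are well defined and inside $\mathcal D$ on $[0,s_1]$, while the equality $X_s=Y_s$ for $s\ge s_1$ then forces any subsequent exit to be simultaneous.
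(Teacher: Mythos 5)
Your proof is correct and follows essentially the same approach as the paper: the key observation in both cases is that on the event $E\cap\{\tau_X>s_1\}\cap\{\tau_Y>s_1\}$ the two exit times coincide, and a union bound then gives the estimate $2\varepsilon_1+\varepsilon_4$. The only difference is that your case split $t\le s_1$ versus $t>s_1$ is unnecessary: your argument for the second regime, which boils down to $|\mathbb P(\tau_X>t)-\mathbb P(\tau_Y>t)|\le\mathbb P(\tau_X\neq\tau_Y)\le\mathbb P(A^c)$, already works for all $t\ge 0$, which is how the paper phrases it.
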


\begin{proof}
Denote respectively $\tau_{\mathcal D^c}$ and $\tilde \tau_{\mathcal D^c}$ the exit times of $X$ and $Y$. Remark that
\[\{\tau_{\mathcal D^c} \geqslant s_1\}\cap\{\tilde \tau_{\mathcal D^c} \geqslant  s_1\}\cap\{X_{t} = Y_{t}\ \forall t\geqslant s_1\} \ \subset\ \{\tau_{\mathcal D^c}=\tilde\tau_{\mathcal D^c}\}\,. \]
As a consequence, for all $t\geqslant 0$,
\begin{eqnarray*}
|\mathbb P\po \tau_{\mathcal D^c} > t\pf -\mathbb P\po \tilde \tau_{\mathcal D^c} > t\pf| & \leqslant & \mathbb E \po |\indiq_{ \tau_{\mathcal D^c} > t}-\indiq_{\tilde  \tau_{\mathcal D^c} > t}|\pf \\
& \leqslant & \mathbb P \po  \tau_{\mathcal D^c} \neq \tilde  \tau_{\mathcal D^c} \pf \\
& \leqslant &  \mathbb P\po \tau_{\mathcal D^c} < s_1\pf + \mathbb P\po \tilde \tau_{\mathcal D^c} < s_1\pf + \mathbb P \po \exists t>s_1\,,\ X_t\neq Y_t\pf\\
& \leqslant & \ 2\varepsilon_1 + \varepsilon_4\,. 
\end{eqnarray*}
\end{proof}

\section{Exponentiality of exit times for the systems of interacting neurons}\label{sec:metastableneurons}

We come back to the study of the process $U^N$ of interacting neurons  introduced in Section \ref{Section-modeldef} and give the proof of Theorem~\ref{theo:exitTimes}. 
 In this section, Assumptions  \ref{ass:lambda} and condition~\eqref{condition_a_b_y0} are enforced, $p_* = \int_0^\infty \lambda g$ where $g$ is the unique positive non-linear equilibrium given by  Corollary~\ref{Cor:uniqueEquilibre}, and $x_\infty = \lambda_*(1-a-b)$ is the unique equilibrium of the auxiliary limit  equation \eqref{eq:limitz} (see Section~\ref{sec:LDP2}). Recall that for $u\in\R_+^N$ and $\delta>0$ we denote $\jump(u)=\sum_{i=1}^N \lambda(u_i)/N$ and $\{\jump \geqslant \delta\}=\{u\in\R_+^N,\ \jump(u) \geqslant \delta\}$ (and similarly for $\{\jump < \delta\}$, etc.).

We wish to apply Theorem~\ref{thm:gene-exit}  in this context. In view of condition \eqref{eqdef:generalexit11} and Proposition \ref{prop:eps1}, it means we have to bound hitting/exit times  for some metastable states, and to be able to couple two processes starting in two different positions in these metastable states. We establish these intermediary results in the next two sections.

%
%



\subsection{Hitting times of metastable sets}

For $\delta>0$ small enough so that $\delta<x_\infty-\delta$, denote $\mathcal K^1_\delta = \{\jump \geqslant x_\infty-\delta\}$ and $\mathcal K^2_\delta = \{p_*-\delta \leqslant \jump \leqslant p_* + \delta\}$.

\begin{prop}\label{prop:exit1}
Grant Assumptions \ref{ass:lambda} and condition~\eqref{condition_a_b_y0}.
\begin{enumerate}
\item  For all $0<\gamma<\delta<x_\infty/2$ there exist $C,T,\theta>0$ such that for all $N\in\mathbb N_*$
\begin{eqnarray*}
\sup_{u\in\R_+^N} \mathbb P_u \po \tau_{\{\jump \leqslant \delta\}} > T\pf & < & 1\\
\sup_{u\in\{\jump \geqslant \gamma\}} \mathbb P_u \po \tau_{\mathcal K_\gamma^1} > T \pf & \leqslant & C e^{-\theta N}\\
\sup_{u\in\mathcal K_\gamma^1}\mathbb P\po \tau_{(\mathcal K_\delta^1)^c} \leqslant e^{\theta N} \pf & \leqslant & C e^{-\theta N}\\
\sup_{u\in\{\jump \geqslant \gamma\}} \mathbb P_u \po \tau_{\mathcal K_\gamma^2} > T \pf & \leqslant & \frac{C}{\sqrt N}\,.
\end{eqnarray*}
\item For all $0<\delta<x_\infty/2$ there exists $\gamma \in (0,\delta)$ and $C>0$ such that for all $N\in\mathbb N_*$
\[\sup_{u\in\mathcal K_\gamma^2}\mathbb P\po \tau_{(\mathcal K_\delta^2)^c} \leqslant N^{1/4} \pf \ \leqslant \ \frac{C}{N^{1/4}}\,.\]
\end{enumerate}

\end{prop}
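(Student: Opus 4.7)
The plan is to prove the five assertions separately, combining three tools already developed earlier in the paper: the auxiliary lower-bound process $Z^N$ of Proposition~\ref{prop:Z} together with its large-deviation analysis (Theorem~\ref{theo:ldp} and Proposition~\ref{prop:ldp1}), which supplies exponentially-in-$N$ sharp bounds; the propagation-of-chaos estimates of Proposition~\ref{prop:propchaos}, giving polynomial-in-$N$ control of $|\jump(U^N)-z_t|$ on bounded time horizons; and the exponential contraction of the nonlinear limit proved in Theorem~\ref{theo:gattractive} and, at the level of jump rates, in Corollary~\ref{Cor:CouplNonLin}.

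For the first inequality, I would use the Lyapunov-type bound $\E_u[\bar U^N(t)] \leq \bar U^N(u)\, e^{-\alpha t} + h\lambda_*/\alpha$ (which follows from $d\bar U^N/dt \leq -\alpha \bar U^N$ between spikes and positive jumps of size $\leq h/N$ at rate $\leq N\lambda_*$) combined with a direct treatment of the high-$\bar U^N$ regime, in which the neurons with $U_i \geq u_*$ spike at rate $\lambda_*$ and are thus reset quickly. This should produce, uniformly in $u$, an event of positive probability on which $\bar U^N(T/2) \leq M$ for some fixed $M$. Conditional on it, the further scenario ``no spike during $[T/2,T]$''---of probability $\geq e^{-N\lambda_* T/2} > 0$---forces $U_i(T) \leq M e^{-\alpha T/2}$ for each $i$, so that $\jump(U^N(T)) \leq \delta$ for $T$ chosen large enough.

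For the second inequality I will couple $\jump(U^N) \geq Z^N$ with $Z^N(0) = \gamma \wedge z_N$, which equals $\gamma$ for $N$ large: the deterministic ODE $\dot x = -\alpha x + G(x)f(x)$ reaches $x_\infty - \gamma/2$ in a finite time $T$, and Theorem~\ref{theo:ldp} applied to the closed set of trajectories that stay below $x_\infty - \gamma$ at time $T$ (whose associated cost is strictly positive, the ODE path being the unique zero-cost one) yields the $Ce^{-\theta N}$ bound. For the third, Proposition~\ref{prop:ldp1} applied to $Z^N$ initialized near $x_\infty$ shows that the exit time of $Z^N$ from $\{Z^N \geq x_\infty - \delta\}$ exceeds $e^{\theta N}$ with probability $\geq 1 - Ce^{-\theta N}$ for any $\theta$ strictly below the associated cost $\bar V_{x_\infty - \delta}$, and during that time $\jump \geq Z^N \geq x_\infty - \delta$ persists. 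For the fourth, I apply Theorem~\ref{theo:gattractive} to the nonlinear limit initialized at the empirical distribution $\mu^N_0 = N^{-1}\sum \delta_{u_i}$ (so that $z_0 = \jump(u) \geq \gamma$): the rate $z_t$ enters the $\gamma/2$-neighborhood of $p_*$ in a deterministic time $T(\gamma)$, and Proposition~\ref{prop:propchaos} then yields $\mathbb P(|\jump(U^N(T)) - z_T| > \gamma/2) \leq C/\sqrt N$.

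Part~(2) will be the main obstacle: the propagation-of-chaos bound of Proposition~\ref{prop:propchaos} degrades exponentially in time, so a direct application on $[0,N^{1/4}]$ fails. My plan is to slice this interval into blocks of a fixed length $T_0$ chosen so that the contraction factor of Theorem~\ref{theo:gattractive} satisfies $\kappa^{T_0} < 1/2$, and at the start of each block to restart the coupling by reinitializing the nonlinear limit at the current empirical distribution of the particle system. On each block, Proposition~\ref{prop:propchaos} provides an in-block coupling error of order $1/\sqrt N$, while the contraction at the level of jump rates from Corollary~\ref{Cor:CouplNonLin} prevents the deviation $|\jump(U^N(kT_0)) - p_*|$ from accumulating across blocks, producing a geometric bound uniform in the block index. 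A union bound over the $O(N^{1/4})$ blocks then yields the claimed $O(N^{-1/4})$ exit probability. The crux is the careful pairing of the per-block fluctuation estimate with the contraction rate, so as to keep the many accumulating errors controlled on a time scale diverging with $N$.
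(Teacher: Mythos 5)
Your proposal is correct and follows essentially the same strategy as the paper: comparison with the auxiliary process $Z^N$ and its large-deviation principle for the two exponential bounds, propagation of chaos (Proposition~\ref{prop:propchaos}) combined with the nonlinear contraction (Corollary~\ref{Cor:CouplNonLin}) for the two polynomial bounds, and a block-of-fixed-length-$T$ decomposition with a union bound over $O(N^{1/4})$ blocks for part~(2). The only place you deviate slightly is the first inequality, where the paper replaces your Lyapunov-plus-``treat the high regime'' sketch by the cleaner explicit scenario that, with probability at least $e^{-N\lambda_*}(1-e^{-\lambda_*})^N>0$, every neuron with $u_i\geq e^{\alpha}\lambda_*/k$ spikes exactly once and the others not at all in $[0,1]$, giving a uniform bound $U_j^N(1)\le e^{\alpha}\lambda_*/k+h$ before drifting down; your version would need a bit of care to get uniformity in $u$, but the conclusion is the same.
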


\begin{proof}
For the first point, remark that, for all $u\in\R_+^N$, 
with a probability larger than $e^{-N\lambda_*}(1-e^{-\lambda_*})^N$, in the time interval $[0,1]$,  all the neurons $i$ with $u_i \geqslant e^{\alpha}\lambda_*/k$ undergo exactly a spike and the other neurons do not spike. In that case, the total number of spikes during  $[0,1]$ is smaller than $N$ so that $U_j^N(1) \leqslant 
e^{\alpha}\lambda_*/k + h$   for all $j\in\cco 1,N\ccf$. Let $t$ be such that
\[\lambda\po e^{-\alpha t} \po e^{\alpha}\lambda_*/k + h\pf \pf \ \leqslant \ \delta \,.\]
Then with positive probability there is no spike during the time $[1,1+t]$ and the process deterministically reaches $\{\jump\leqslant \delta\}$.

For the two next points, consider the auxiliary process $Z^N$ with generator~\eqref{eq:zN} with $Z^N(0) = x_\infty \wedge \jump(u)$. From Proposition~\ref{prop:Z}, $Z^N \leqslant \jump(U^N)$ for all times, in particular $Z^N$ reaches $[x_\infty-\gamma,+\infty)$ after $\jump(U^N)$ and $(0,x_\infty-\delta]$ before $\jump(U^N)$.  The limit equation \eqref{eq:limitz} of $Z^N$ reaches $[x_\infty-\gamma,+\infty)$ from $\gamma$ in a finite time, and the Large Deviation cost $V(x_\infty-\gamma,x_\infty-\delta)$   is positive, so that the Large Deviation result of Theorem~\ref{theo:ldp} concludes the proof of the two first points.

For the fourth point, consider  the settings of Proposition~\ref{prop:propchaos} with $\nu_i = \delta_{u_i}$ for all $i\in\cco 1,N\ccf$. In particular,
\[z_0  \ = \ \frac1N\sum_{i=1}^N \lambda(u_i)  \ \geqslant \ \gamma\,.\]
From Corollary~\ref{Cor:CouplNonLin} applied with $Z(0) \sim 1/N\sum_{i=1}^N\delta_{u_i}$ and $\tilde Z(0) \sim g$,
\[|z_t - p_*|  \ \leqslant\   C_\gamma \kappa^t \lambda_*\,, \]
where we used that $p_* \geqslant \gamma$ since $p_* \geqslant x_\infty$ from Proposition~\ref{PropOinstable}.

In particular there exists $T>0$ (uniform over $u\in\{\jump\geqslant \gamma\}$) such that $|z_T - p_* | \leqslant \gamma/2$. From Proposition~\ref{prop:propchaos}, for all $u\in\{\jump\geqslant \gamma\}$, 
\[\mathbb P_u \po \tau_{\mathcal K_\gamma^2} > T \pf \ \leqslant \ \mathbb P_u \po |z_T - \jump \po U^N(T)\pf| \geqslant  \frac{\gamma}{2} \pf \ \leqslant \ \frac{C}{\sqrt N}\]
for some $C>0$.

For the last point of the proposition, as can be seen by applying Corollary~\ref{Cor:CouplNonLin} as above, for all $\delta>0$ there exists $\gamma\in(0,\delta)$ such that if $z_t = \int_0^{+\infty} \lambda \mu_t$ with $\mu_t$ the law of a process \eqref{eq:limitU} then
\[|z_0 - p_*|\leqslant \gamma \qquad \Rightarrow \qquad |z_t -p_*| < \delta/2 \ \forall t\geqslant 0\,.\]
Moreover, as previously, there exists $T$ such that
\[|z_0 - p_*|\leqslant \gamma \qquad \Rightarrow \qquad |z_T -p_*| \leqslant \gamma/2 \,.\]
If $U^N$ is the process \eqref{eq:EDS_U^N} with initial condition $u\in\{p_*-\gamma\leqslant \jump \leqslant p_*+\gamma\}$, consider the events
\[A_k \ = \ \left\{ |\jump(U^N(s))- p_* | < \delta \ \forall s\in[kT,(k+1)T]\text{ and } |\jump(U^N((k+1)T))- p_* | \leqslant \gamma\right\}\]
for $k\in\mathbb N$. From Proposition~\ref{prop:propchaos},  for all $u\in\{p_*-\gamma\leqslant \jump \leqslant p_*+\gamma\},$
\begin{eqnarray*}
\mathbb P_u(\overline{A_0}) & \leqslant & \mathbb P\po \sup_{s\in[0,T]} |z_s - \jump\po U^N(s)\pf| \geqslant \delta/2\pf + \mathbb P\po  |z_T - \jump\po U^N(T)\pf| \geqslant \gamma/2\pf  \ \leqslant \ \frac{C}{\sqrt N}
\end{eqnarray*}
for some $C$ independent from $u$. By the Markov property, for all $K\in\mathbb N_*$ and $u\in\{p_*-\gamma\leqslant \jump \leqslant p_*+\gamma\},$
\[\mathbb P_u\po \tau_{(\mathcal K_\delta^2)^c} \leqslant KT\pf  \ \leqslant \ \mathbb P_u \po \overline{A_0} \pf  +\mathbb P_u \po A_0 \cap \overline{A_1}\pf + \ldots +  \mathbb P_u \po A_0 \cap \ldots \cap A_{K-2} \cap \overline{A_{  K-1}}\pf \ \leqslant \ \frac{KC}{\sqrt N}\,.\]
Conclusion follows from the choice $K=\lceil N^{1/4}/T\rceil$.
\end{proof}

\subsection{Coupling two systems of interacting neurons}

The coupling argument for two systems of interacting particles with different initial conditions partially mimics those of the non-linear processes developed in Section~\ref{sec:couplage-non-lineaire}. In particular, before coupling the processes, we start by coupling their jump rates.

\begin{prop}\label{prop:couple-Lambda-N}
Grant Assumptions  \ref{ass:lambda} and condition~\eqref{condition_a_b_2}. There exist $C,\theta,\gamma>0$ and $\kappa\in(0,1)$ such that the following holds. For all $u_0,\tilde u_0\in\{\jump\geqslant x_\infty-\gamma\}$, the synchronous coupling of $(U^N,\tilde U^N)$ with initial condition $(u_0,\tilde u_0)$ satisfies  for all $N\in\mathbb N_*$ and all $t\in[0,N]$
\begin{eqnarray*}
\mathbb E\po \sum_{i=1}^N |\lambda(U_i^N(t)) - \lambda(\tilde U_i^N(t))|\pf & \leqslant & C \po \kappa^t N + e^{-\theta N}\pf\,. 
\end{eqnarray*}
\end{prop}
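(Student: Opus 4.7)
The argument proceeds by mirroring the non-linear coupling analysis of Theorem~\ref{theo:CouplNonLin}, reduced to a high-probability event on which both empirical jump rates stay near the stable equilibrium $x_\infty$ of the auxiliary limit equation \eqref{eq:limitz}. The strategy is to first isolate a ``confinement'' event whose complement has exponentially small probability, and then run a Gronwall-type iteration on the quantity $F(t) := \mathbb E\bigl[\sum_{i=1}^N |\lambda(U_i^N(t))-\lambda(\tilde U_i^N(t))|\bigr]$ whose structure is forced by condition \eqref{condition_a_b_2}.

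First, I will choose $\gamma > 0$ small enough (depending on the margin left by \eqref{condition_a_b_2}) and define the confinement event
\[\mathcal G_N \ :=\ \Bigl\{\min_{s\in[0,N]}\min\bigl(\jump(U^N(s)),\jump(\tilde U^N(s))\bigr)\ \geqslant\ x_\infty-2\gamma\Bigr\}\,.\]
Applying Proposition~\ref{prop:Z} to each of $U^N$ and $\tilde U^N$, one gets auxiliary processes $Z^N$ and $\tilde Z^N$ that stochastically lower bound the empirical jump rates. Because $x_\infty$ is a globally attracting equilibrium of \eqref{eq:limitz} and the Freidlin--Wentzell cost $V(x_\infty-\gamma,x_\infty-2\gamma)$ is strictly positive, Theorem~\ref{theo:ldp} (in the form already used in Proposition~\ref{prop:ldp1}) yields $\mathbb P(\mathcal G_N^c)\le C e^{-\theta N}$ for some $C,\theta>0$, provided $t\le N$. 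On $\mathcal G_N^c$ one simply uses $\sum_i |\lambda(U_i^N(t))-\lambda(\tilde U_i^N(t))|\le 2 N\lambda_*$, which produces the additive $e^{-\theta N}$ term in the statement (after absorbing $N$ into the constant via a slightly smaller $\theta$).

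Second, on $\mathcal G_N$ I will adapt the three-cases analysis of the proof of Theorem~\ref{theo:CouplNonLin}, working now particle-by-particle. Under the synchronous coupling, synchronous jumps of a given neuron $i$ reset both $U_i^N$ and $\tilde U_i^N$ to $0$, while asynchronous jumps of neuron $i$ occur at rate $|\lambda(U_i^N)-\lambda(\tilde U_i^N)|$ and contribute at most $\lambda_*/k+h$ to $|U_i^N-\tilde U_i^N|$. Between jumps each neuron feels a drift $-\alpha U_i^N+h\jump(U^N)+O(1/N)$; on $\mathcal G_N$ this drift is at least $h\lambda_*(1-2a-b)-h\gamma+O(1/N)$ while $U_i^N\le \lambda_*/k$, so each neuron crosses the threshold $\lambda_*/k$ in a deterministic time bounded by some $t_\gamma$, after which the pair $(U_i^N,\tilde U_i^N)$ has identical (saturated) jump rate $\lambda_*$ until the next jump of one of the two. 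The subtle point is that the drift discrepancy felt by a pair $(U_i^N,\tilde U_i^N)$ is now $h|\jump(U^N)-\jump(\tilde U^N)|=hF(t)/N$, so summing the per-particle bounds and taking expectation gives on $\mathcal G_N$ an inequality of the form
\[F(t) \ \le\ \nu_\gamma \int_{(t-t_\gamma)_+}^t F(s)\,ds\,+\, \bigl[F(0)+kh\int_0^t F(s)\,ds\bigr]\mathbf 1_{[0,t_\gamma]}(t)\,+\, R_N(t)\,,\]
where $\nu_\gamma$ is a constant of the same nature as the one appearing in \eqref{eq:gronwall} and $R_N(t)$ collects order-$N e^{-\theta N}$ remainder terms coming from conditioning on $\mathcal G_N$ and from $O(1/N)$ corrections in the drift.

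Third, I will iterate this inequality exactly as in Step~2 of the proof of Theorem~\ref{theo:CouplNonLin}: a first application of Gronwall on $[0,t_\gamma]$, followed by a geometric iteration on intervals $[nt_\gamma,(n+1)t_\gamma)$, produces $F(t)\le C'(\nu_\gamma t_\gamma e^{\nu_\gamma t_\gamma})^{t/t_\gamma}F(0)+\tilde R_N(t)$. The sharpened constants appearing in this finite-$N$ version—in particular the extra exponential factors from the Gronwall windows of length $t_\gamma$—explain why condition \eqref{condition_a_b_2}, which forces precisely the product $d e^d(1+c e^{(4+2b)c})\le 1$ with $c=1/(1-2a-b)$ and $d=bc$, is exactly what makes the resulting rate strictly smaller than $1$. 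Using $F(0)\le kN\lambda_*/k=N\lambda_*$ and combining with the trivial bound on $\mathcal G_N^c$ yields the stated $C(\kappa^t N+e^{-\theta N})$ for $t\in[0,N]$.

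\textbf{Main obstacle.} The delicate step is the derivation and iteration of the inequality for $F(t)$: one must track per-particle cancellations under the synchronous coupling, control the asymmetric situation where only one of $U_i^N,\tilde U_i^N$ is below $\lambda_*/k$, and check that the finite-$N$ corrections in the drift as well as the loss from conditioning on $\mathcal G_N$ are genuinely of order $e^{-\theta N}$ and do not degrade the contraction factor. The key quantitative point is that the passage from the non-linear condition \eqref{condition_a_b_y0} to the stronger condition \eqref{condition_a_b_2} must absorb all these finite-$N$ losses while still leaving $\kappa<1$.
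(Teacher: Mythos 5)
Your overall strategy (confinement event plus a Gronwall iteration mimicking the proof of Theorem~\ref{theo:CouplNonLin}, with the stronger condition \eqref{condition_a_b_2} absorbing the finite-$N$ losses) is the right one, and your identification of what the end product of the iteration should be is essentially correct. However, there is a genuine gap at the heart of the argument, which the paper takes pains to address explicitly.

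The problem is in your Case 3 analogue. After a synchronous spike of neuron $i$ at some time $D_t^i\in[(t-t_\delta)_+,t]$, you want to write the contribution to $\mathbb E[W_i(t)]$ as roughly (probability of a synchronous spike in a window of length $t_\delta$) $\times$ (expected accumulation of $W_j$, $j\neq i$, after $D_t^i$), to recover the crucial extra factor of $t_\delta$. But the event $E_t^i$ that neuron $i$ has a synchronous spike in the window and the random variables $W_j(s)$, $j\neq i$, $s>D_t^i$, are \emph{not} independent: the spike of neuron $i$ bumps every other neuron's potential, which in turn modifies its subsequent spike times and hence $W_j$. You therefore cannot factorize the expectation, and bounding $\indiq_{E_t^i}\leqslant 1$ instead kills the $t_\delta$ factor and destroys the contraction. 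The paper resolves this by introducing, for each fixed $i$, a ``silenced'' auxiliary pair $(V,\tilde V)$ in which neuron $i$'s spikes have no effect on the rest of the system: before $D_t^i$ one has $(U^N,\tilde U^N)=(V,\tilde V)$, and by concavity of $\lambda$ one has $W_j(D_t^i)\leqslant \tilde W_j(D_t^i)$, while the conditional law of $D_t^i$ given $(V,\tilde V)$ has density bounded by $\lambda_*$. This decoupling is what produces the $t_\delta^2$ factor in $\tilde\nu_\delta$ and hence the contraction. Your sketch does not contain any mechanism for this, and a direct application of the Markov property at $D_t^i$ without such a decoupled comparison process will not deliver the needed inequality.

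Two secondary issues. First, your confinement event $\mathcal G_N$ is $\mathcal F_N$-measurable, not adapted, so the stated integral inequality for $F(t)=\mathbb E[\sum_i W_i(t)]$ with a uniform error $R_N(t)$ does not follow from restricting expectations to $\mathcal G_N$; one has to either use the increasing family of adapted events $\mathcal G^{(s)}_N$ (which requires a more careful bookkeeping of the Gronwall argument) or, as the paper does, apply a local high-probability event over each window $[(t-t_\delta)_+,t]$ and collect the $e^{-\theta N}$ errors through the iteration. Second, the paper also needs a separate argument (its Step 3, via the $(Z^{N-1},Y^{N-1})$ auxiliary) for the no-spike case, precisely because its confinement events are local; your global $\mathcal G_N$ would in principle allow a purely deterministic argument that a spike-free neuron $i$ reaches $\lambda_*/k$ in time $t_\delta$, so this part of your plan is a legitimate (and arguably cleaner) alternative --- but it does not remedy the decoupling problem above.
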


\begin{proof}
Let $\delta >0$ be small enough so that, considering $t_\delta$ given by \eqref{eq:to}, then
\[\kappa_\delta \ := \ \lambda_* t_\delta  e^{\lambda_*t_\delta }  \po 1 + kh  t_\delta  e^{2(2kh+\lambda_*)t_\delta}\pf  <1\,.\]
It is indeed possible to do so since, as $\delta$ vanishes, $\kappa_\delta$ goes to
\begin{eqnarray*}
\kappa_0 & = & \frac{b}{a}\ln\po 1+\frac{a}{1-2a-b}\pf e^{\frac{b}{a}\ln\po 1+\frac{a}{1-2a-b}\pf} \po 1+ \frac1a \ln\po 1+\frac{a}{1-2a-b}\pf e^{\frac{4+2b}{a} \ln\po 1+\frac{a}{1-2a-b}\pf}\pf\\
& < & \frac{b}{1-2a-b} \exp \po \frac{b}{1-2a-b} \pf \po 1 + \frac{1}{1-2a-b} \exp\po \frac{4+2b}{1-2a-b}\pf \pf \ \leqslant \ 1\,,
\end{eqnarray*}
where we used that $\ln(1+x)<x$ for all $x>0$.

Let $\gamma \in (0,\delta)$ and take $u_0,\tilde u_0\in\{\jump\geqslant x_\infty-\gamma\}$. Considering these two different thresholds ($\delta$ and $\gamma$) is motivated by the following reason: starting with an average jump rate above the level $x_\infty - \gamma$, using the comparison with the auxiliary process $Z^N$, we will get that the average jump rate stays with high probability above $x_\infty -\delta$ during the time interval $[0,N]$. This replaces the argument in the limit non-linear case of Section~\ref{sec:couplage-non-lineaire} where the process deterministically stays in $\{\jump \geqslant x_\infty -\delta\}$ if it started there.

For all $i\in\cco 1, N\ccf$ and $t\geqslant 0$, denote $W_i(t) = |\lambda(U_i^N(t))-\lambda(\tilde U_i^N(t))|$.

\bigskip
 
 \textbf{Step 1.} Fix $i\in \cco 1,N\ccf$ and $t\geqslant 0$.

{\it Case 1.} First, suppose that $t\leqslant t_\delta$ and that there is no spike in the time interval $[0,t]$ for both $U^N_i$ and $\tilde U_i^N$.  In the absence of spike for the $i^{th}$ neuron, $W_i$ only increases (at most by $kh/N$) when there is an asynchronous spike for a pair $(U_j^N,\tilde U_j^N)$ for $j\neq i$, which happens at rate $ W_j$.  As a consequence, for $t\leqslant t_\delta$,
\[\E\po W_i(t)\indiq_{\text{no spike of $i$  in }[0,t]}\pf   
\ \leqslant \ \E\po W_i(0)\pf +  \frac{kh}N \int_{(t-t_\delta)_+}^t \sum_{j\neq i} \E\po W_j(s)\pf ds\,.\]

{\it Case 2.} Second, suppose that there is an asynchronous spike for $(U_i^N,\tilde U_i^N)$ in $[(t-t_\delta)_+,t]$. In that case we simply bound $W_i(t)\leqslant \lambda_*$ and then
\begin{eqnarray*}
\mathbb E\po W_i(t)\indiq_{\text{asynchronous for $i$ in }[(t-t_\delta)_+,t]} \pf & \leqslant & \lambda_* \mathbb P\po \text{asynchronous for $i$ in }[(t-t_\delta)_+,t]\pf\\
& \leqslant & \lambda_*\int_{(t-t_\delta)_+}^t \mathbb E \po W_i(s) \pf ds \,.
\end{eqnarray*}

{\it Case 3.} Third, denoting $D_t^i$ the first spike of either $U_i^N$ or $\tilde U_i^N$ after time $(t-t_\delta)_+$, consider the event $E_t^i = \{D_t^i\leqslant t \,, U^N(D_t^i)=\tilde U_i^N(D_t^i)=0\}$. In other words, under $E_t^i$, $D_t^i$ corresponds to a synchronous spike, in particular $W_i(D_t^i)=0$.    As in Case 1, after the time $D_t^i$ and in the absence of asynchronous jumps for $i$, $W_i$ only increases (at most by $kh/N$) when there is an asynchronous spike for a pair $(U_j^N,\tilde U_j^N)$ for $j\neq i$. 
More precisely, writing $F_t^i=\{$no asynchronous spike for $i$ in $[(t-t_\delta)_+,t]\}$ then,  almost surely,
\begin{eqnarray}\label{eq:WiEi}
W_i(t) \indiq_{E_t^i}\indiq_{F_t^i}  & \leqslant & \indiq_{E_t^i}\frac{kh}{N}\int_{D_t^i}^t \sum_{j\neq i}\int_{\R_+}|\indiq_{\{z\leqslant \lambda(U^N_j(s))\}}-\indiq_{\{z\leqslant \lambda(U^N_j(s))\}}|\pi^j(dz,ds)\,.
\end{eqnarray}

Remark that, by comparison with the non-linear case of Theorem~\ref{theo:CouplNonLin}, there is an additional difficulty here, which is that $E_t^i$ is not independent from the asynchronous jumps of the neurons $j\neq i $ after time $D_t^i$. On the other hand we cannot simply bound the indicator of $E_t^i$ by $1$ because then we would miss a factor $t_\delta$ that is crucial to obtain at the end a contraction with a rate $\kappa<1$.  We will come back to this question in Step 2 below but, for now, indeed we simply bound the indicator by $1$ to get
\[\E\po W_i(t)\indiq_{E_t^i}\indiq_{F_t^i}\pf   
\ \leqslant \  \frac{kh}N \int_{(t-t_\delta)_+}^t \sum_{j\neq i} \E\po W_j(s)\pf ds\,.\]
{\it Conclusion of Step 1.} At this point we have established that for all $t\leqslant t_\delta$,
\[\sum_{i=1}^N \E\po W_i(t)\pf \ \leqslant \ \sum_{i=1}^N  \E\po W_i(0)\pf + (2kh+\lambda_*)\int_0^t \sum_{i=1}^N \E\po W_i(s)\pf ds\]
and thus
\begin{equation}\label{eq:Gronwall}
\sum_{i=1}^N \E\po W_i(t)\pf   \leqslant  e^{(2kh+\lambda_*)t}\sum_{i=1}^N \E\po W_i(0)\pf\,. 
\end{equation} 

\bigskip

\textbf{Step 2.} Fix $i\in\cco 1,N\ccf$ and $t\geqslant 0$. We now tackle the issue raised in Case 3 of Step 1 by considering a process $(V,\tilde V)$ similar to $(U^N,\tilde U^N)$ except that the spike of the $i^{th}$ neurons has no effect on the rest of the system. More precisely, we put, for all $ j \neq i, $ 
\[ d V_j (t) = - \alpha   V_j  (t) dt + \frac{h}{N}  \sum_{k \neq j, i  }   \int_{\R_+} \indiq_{\{ z \le \lambda ( V_k ({t-}) ) \}} \pi^k ( dt, dz) 
   -  \int_{\R_+} V_j ({t-} ) \indiq_{\{ z \le \lambda ( V_t ({t-}) ) \}} \pi^j ( dt, dz),\]
and 
\[ dV_i (t) =- \alpha  V_i  (t) dt + \frac{h}{N}  \sum_{k \neq  i  }  \int_{\R_+} \indiq_{\{ z \le \lambda ( V_k ({t-}) ) \}} \pi^k ( dt, dz) 
  - \int_{\R_+}V_i ({t-} ) \indiq_{\{ z \le \lambda ( V_i ({t-}) ) \}} \pi^i ( dt, dz).\]
The process $ \tilde V$ follows the same dynamic with the same Poisson noise. We initialise these auxiliary processes at time $(t- t_\delta)_+$ and let them start from $ V ((t- t_\delta)_+) = U^N ((t- t_\delta)_+ ) $ and $ \tilde V ((t- t_\delta)_+ ) = \tilde U^N ((t- t_\delta)_+ ) $. For all $j\in\cco 1,N\ccf$ and $s\geqslant (t-t_\delta)_+$, denote $\tilde W_j(s) = |\lambda(V_j(s))-\lambda(\tilde V_j(s))|$. The arguments of Step 1 are straightforwardly adapted to the process $(V,\tilde V)$ to get that, for all $s\geqslant (t-t_\delta)_+$,
\begin{eqnarray}\label{eq:GronwallVtildeV}
\sum_{j=1}^N \E\po \tilde W_j(s)\pf  & \leqslant & e^{(2kh+\lambda_*)s}\sum_{j=1}^N \E\po \tilde  W_j \po (t-t_\delta)_+\pf\pf\,. 
\end{eqnarray} 
Now consider again the context of Case 3 in Step 1, namely the event $E_t^i$. Taking in \eqref{eq:WiEi} the conditional expectation with respect to $\mathcal F_{D_t^i} = \sigma\{(U^N(s),\tilde U^N(s))_{s\leqslant D_t^i}\}$, using the strong Markov property and then \eqref{eq:Gronwall}, we get
\begin{eqnarray*}
\E \po  \indiq_{  E_t^i} \indiq_{F_t^i}W_i(t)\pf & \leqslant &  
\frac{ k h}{N}  \E \left(   \indiq_{ E_t^i }  \E_{ (U^N(D_t^{i}), \tilde U^N(D_t^{i}) )}\left[ \int_0^{ t - T }  \sum_{j=1}^N  W_j(s) ds\right]_{T = D_t^i  }  \right)\\ 
& \leqslant &  
\frac{ k h}{N}  \E \left(   \indiq_{ E_t^i } \sum_{j=1}^N  W_j(D_t^i) \int_0^{ t - D_t^i }e^{(2kh+\lambda_*)s}ds   \right) \,.
\end{eqnarray*}
Before time $D_t^i$, by design, $(U^N,\tilde U^N)=(V,\tilde V)$, in particular $W_j=\tilde W_j$ for all $j\in\cco 1,N\ccf$. At time $D_t^i$, $W_i(D_t^i)=0\leqslant \tilde W(D_t^i)$ and, since $\lambda$ is a concave function, for all $j\neq i$,
\begin{eqnarray*}
W_j(D_t^i) & = & \left|\lambda\po U_j^N(D_t^i-)+\frac{kh}N\pf-\lambda\po \tilde U_j^N(D_t^i-)+\frac{kh}N\pf\right|\\
 & \leqslant &  \left|\lambda\po U_j^N(D_t^i-)\pf-\lambda\po \tilde U_j^N(D_t^i-)\pf\right| \  = \ \tilde W_j(D_t^i- )= \tilde W_j(D_t^i )\,.
\end{eqnarray*}
As a consequence, 
\begin{eqnarray*}
\E \po  \indiq_{  E_t^i} \indiq_{F_t^i} W_i(t)\pf 
& \leqslant &  
\frac{ k h}{N}  \E \left(   \indiq_{ D_t^i \leqslant t } \sum_{j\neq i} \tilde W_j(D_t^i) \int_0^{ t - D_t^i }e^{(2kh+\lambda_*)s}ds   \right) \\
& \leqslant &  
\frac{ k h}{N} \int_{(t-t_\delta)_+}^t \E\po  \sum_{j\neq i} \tilde W_j(u) \pf \lambda_* \int_0^{ t - u }e^{(2kh+\lambda_*)s}ds  du
\,,
\end{eqnarray*}
where we have used that the density of the conditional law of $D_t^i$ with respect to $(V(s),\tilde V(s))_{s\geqslant(t-t_\delta)_+}$ is always bounded by $\lambda_*$. Finally, using \eqref{eq:GronwallVtildeV}  
\begin{eqnarray*}
\E \po  \indiq_{  E_t^i} \indiq_{F_t^i} W_i(t)\pf 
& \leqslant &  
\frac{ k h}{N} \int_{(t-t_\delta)_+}^t \E\po  \sum_{j\neq i}  W_j \po (t-t_\delta)_+\pf\pf \lambda_* e^{(2kh+\lambda_*)u}\int_0^{ t - u }e^{(2kh+\lambda_*)s}ds  du\\
& \leqslant & \frac{kh}{N} \lambda_*t_\delta^2 e^{2(2kh+\lambda_*)t_\delta} \E\po  \sum_{j\neq i}  W_j \po (t-t_\delta)_+\pf\pf \,.
\end{eqnarray*}
As will be clear in Step 4 below, here we have solved the issue raised in Case 3 of Step 1. 

\bigskip

\textbf{Step 3.} In Step 1, we have considered the case where there is no spike in $[(t-t_\delta)_+,t]$ only for $t\leqslant t_\delta$.
Now let $t>t_\delta$, and $i\in\cco 1,N\ccf$. Considering the event 
\[A_t^i = \left\{\text{there is no spike for the $i^{th}$ neurons in $[t-t_\delta,t]$, and }\lambda(U_i^N(t))\neq \lambda(\tilde U_i^N(t))\right\}\,,\]
we simply bound
\[\E\po W_i(t)\indiq_{\text{no spike in }[t-t_\delta,t]}\pf \ \leqslant \ \lambda_* \mathbb P\po A_t^i\pf\,.\]
We now bound $\mathbb P\po A_t^i\pf$ for $t\in[t_\delta, N]$. Consider the event 
\[D_t=\left\{\forall s \in [(t-t_\delta)_+, t ]\,,\ \jump\po U^N(s)\pf \wedge \jump\po \tilde U^N(s)\pf \geqslant x_\infty - \delta\right\}\,.\]
Then
\[\mathbb P(A_t^i) \ \leqslant \ \mathbb P(A_t^i \cap D_t) + \mathbb P\po \bar D_t\pf\,.\]
Consider the auxiliary process $Z^N$ as defined in Section~\ref{section:auxiliaryprocess} with initial condition $Z^N(0) =  x_\infty-\gamma$ and synchronously coupled with $U^N$. According to Proposition~\ref{prop:Z}, $Z^N$ stays below $\jump(U^N)$ for all times. In particular, under the event $\bar D_t $, it reaches the level $x_\infty-\delta$ in a time smaller than $t\leqslant N$. As established in the proof of Proposition~\ref{prop:ldp1}, the Large Deviation cost $V(x_\infty-\gamma,x_\infty-\delta)$ is positive, so that 
\[\mathbb P \po \bar D_t\pf \ \leqslant \ K e^{-\theta N}\]
for some $K,\theta>0$ (where $\tilde U^N$ has been treated similarly to $U^N$).

It remains to bound $\mathbb P(A_t^i \cap D_t)$. Using that $\{U_i^N(t))\wedge \lambda(U_i^N(t)) \geqslant \lambda_*/k\} \subset \{\lambda(U_i^N(t))=\lambda(U_i^N(t))\}$, we bound
\begin{multline*}
\mathbb P(A_t^i \cap D_t) \ \leqslant \ \mathbb P \po \jump\po U^N(t-t_\delta)\pf \geqslant x_\infty-\delta\,,\ \text{no spike for $U^N_i$ during }[t-t_\delta,t]\,,\  U_i^N(t) < \lambda_*/k\pf \\
\ + \ \mathbb P \po \jump\po \tilde U^N(t-t_\delta)\pf \geqslant x_\infty-\delta\,,\ \text{no spike for $\tilde U^N_i$ during }[t-t_\delta,t]\,,\ \tilde U_i^N(t) < \lambda_*/k\pf\,.
\end{multline*}
The two terms being similar, we only treat the first one. Note that if $U_i^N$ presents no spike in $[t-t_\delta,t]$ then on this time interval the $i^{th}$ neuron has no influence on the rest of the system. In other words, $V_j(s)=U_j^N(s)$ for all $j\neq i $ and $s\in[t-t_\delta,t],$ where $V$ has been introduced in Step 2.   Remark that $V^{N-1}:=(V_j)_{j\neq i}$ is exactly a system of interacting neurons with generator \eqref{eq:generator0}, but with only $N-1$ neurons. Moreover,
\[\frac1N \sum_{j\neq i }  \lambda \po V_j(t-t_\delta)\pf \ = \ \frac1N \sum_{j\neq i } \lambda\po U_j^N(t-t_\delta)\pf \ \geqslant \ x_\infty - \delta - \frac{\lambda_*}{N}\,.\]
 Consider $Z^{N-1}$ the auxiliary process synchronously coupled with $V^{N-1}$ and initialized at time $t-t_\delta$ by $Z^{N-1}(t-t_\delta) = x_\infty - \delta - \lambda_*/N$. Recall that, as we saw in the proof of Proposition~\ref{prop:Z},  $Z^{N-1}$ jumps only when $V^{N-1}$ jumps. 
 
Each time $V^{N-1}$ jumps, $U_i^N$ is increased by $h/N$. As a consequence,  in the absence of spike for the $i^{th}$ neuron, $U_i^N(t) \geqslant Y^{N-1}$ where $Y^{N-1}$ is the process that solves   $\dot Y^{N-1} \ = \ -\alpha Y^{N-1}$ between jumps of $Z^{N-1}$, is increased by $h/N$ at each jump of $Z^{N-1}$, in other words
\[d Y^{N-1}(t) \ = \ -\alpha Y^{N-1}(t) dt +\frac hN \int_{\R_+} \indiq_{\{z\leqslant Z^{N-1}(t-)\}}\sum_{j\neq i}\pi^j(dz,dt)\,,\] 
 and is initialized at $Y^{N-1}(t-t_\delta) = 0$. As $N\rightarrow +\infty$, $(Z^{N-1},Y^{N-1})$ converges towards the solution of  
\[\left\{\begin{array}{rcl}
\dot z_s  & =& -\alpha z_s + G(z_s)f(z_s)\\
\dot y_s & = & -\alpha y_s + h z_s
\end{array}\right.\]
with initial position $(z_{t-t_\delta},y_{t-t_\delta})=(x_\infty-\delta,0)$. Recall that $t_\delta$ is by definition the time for the solution of 
\[ \dot x_s \ = \ -\alpha x_s + h (x_\infty-\delta)\,,\qquad x_{t-t_\delta} = 0\]
to reach the threshold $\lambda_*/k$ (see the  proof of Theorem~\ref{theo:CouplNonLin}). Since $\dot y_s > \dot x_s$ for all $s>t-t_\delta$, $y_s$ reaches this threshold in a time $s_\delta < t_\delta$. Finally, the arguments of Section~\ref{sec:ldp1} to obtain a Large Deviation Principle for $Z^N$ are straightforwardly adapted to the process $(Z^{N-1},Y^{N-1})$ to get that
\[\mathbb P \po Y^{N-1}(t) < \lambda_*/k\pf \ \leqslant \ K e^{-\theta N}\]
for some $K,\theta>0$.

As a conclusion of Step 3, we have proven that, for some $K,\theta>0$, for all $t\in[t_\delta, N]$,
\[ \sum_{i=1}^N \E\po W_i(t)\indiq_{\text{no spike for the $i^{th}$ neurons in }[t-t_\delta,t]}\pf \ \leqslant \ K e^{-\theta N}\,.\]

\bigskip

\textbf{Step 4.} This is now similar to the second step of the proof of Theorem~\ref{theo:CouplNonLin}. Gathering the results of Step 1, Step 2 and Step 3, and denoting 
\[f(t) \ = \ \sum_{i=1}^N \E\po W_i(t)\pf\,,\]
we have obtained that for all $t\in[0,N]$,
\[f(t) \ \leqslant  \  \lambda_*  \int_{(t-t_\delta)_+}^t f(s) ds + \co \tilde \nu_\delta f\po t-t_\delta\pf + K e^{-\theta N}\cf\indiq_{t> t_\delta} +  \co f(0) + 2kh \int_0^t f(s) ds\cf \indiq_{t\leqslant t_\delta} \]
with $\tilde \nu_\delta = kh \lambda_*t_\delta^2 e^{2(2kh+\lambda_*)t_\delta}$. Similarly to Step 2 of the proof of Theorem~\ref{theo:CouplNonLin}, we deduce that for all $n\in\mathbb N$ all $t\in [nt_{\delta},(n+1)t_\delta]$ with $t\leqslant N$
$$ f( t) \ \leqslant \ e^{(2kh+\lambda_*)t_\delta }\kappa_\delta^n f (0) + K' e^{ - \delta N}\sum_{k=0}^{n-1}\kappa_\delta^k $$
for some $K'>0$ and $\kappa_\delta<1$ by choice of $\delta$ at the beginning of the proof. As a conclusion, for all $t\leqslant N$,
\[f(t)  \ \leqslant \ \kappa_\delta^{t-1}   e^{(2kh+\lambda_*)t_\delta }  f (0) + \frac{K'}{1-\kappa_\delta} e^{ - \delta N}\]
and $f(0)\leqslant N\lambda_*$.

\end{proof}
 
 \begin{prop}\label{prop:exit2}
Grant Assumptions \ref{ass:lambda} and condition~\eqref{condition_a_b_2}. For all $\zeta\in(0,1]$ there exist $C,\theta, \delta >0$  such that, for all $N\in\mathbb N_*$, for all $u_0,\tilde u_0\in\{\jump\geqslant x_\infty-\delta\}$, the synchronous coupling $(U^N,\tilde U^N)$ of two Markov processes with generator \eqref{eq:generator0} and respective initial conditions $u_0$ and $\tilde u_0$ satisfies
\begin{eqnarray*}
\mathbb P\po U^N(N^\zeta) \neq \tilde U^N(N^\zeta) \pf & \leqslant &  C e^{-\theta N^\zeta}\,.
\end{eqnarray*}
\end{prop}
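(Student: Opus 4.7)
The strategy proceeds in three steps: first use Proposition~\ref{prop:couple-Lambda-N} to drive the pointwise differences between the individual jump rates down to exponentially small size at some intermediate time $t_1=\varepsilon N^\zeta$ with $\varepsilon\in(0,1)$ small; then show that asynchronous spikes are very rare on $[t_1,N^\zeta]$; finally show that on this interval each neuron has at least one synchronous spike, which resets the two copies to $0$ and keeps them equal thereafter. We choose $\delta$ smaller than the $\gamma$ provided by Proposition~\ref{prop:couple-Lambda-N} so that it applies, and set $W_i(t)=|\lambda(U_i^N(t))-\lambda(\tilde U_i^N(t))|$.

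For the first two steps, applying Proposition~\ref{prop:couple-Lambda-N} at time $t_1$ and using that $\log N=o(N^\zeta)$ gives $\mathbb E\sum_i W_i(t_1)\leq C(\kappa^{\varepsilon N^\zeta} N+e^{-\theta N})\leq C'e^{-c_1 N^\zeta}$ for some $c_1>0$. In the synchronous coupling, asynchronous spikes occur at instantaneous rate $\sum_i W_i(s)$, so their expected number on $[t_1,N^\zeta]$ is at most $\int_{t_1}^{N^\zeta}\mathbb E\sum_i W_i(s)\,ds$, which by a second application of Proposition~\ref{prop:couple-Lambda-N} is bounded by $C\kappa^{\varepsilon N^\zeta}N/|\log\kappa|+CN^\zeta e^{-\theta N}\leq C'' e^{-c_2 N^\zeta}$. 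By Markov's inequality, the event $A$ that at least one asynchronous spike occurs on $[t_1,N^\zeta]$ has probability at most $C'' e^{-c_2 N^\zeta}$.

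For the third step, fix $i$ and let $B_i$ be the event that neuron $i$ has no spike on $[t_1,N^\zeta]$. Following the construction of Step 2 of the proof of Proposition~\ref{prop:couple-Lambda-N}, we introduce the auxiliary process $V^{(i)}$ driven by the same Poisson measures but in which neuron $i$ is forbidden to spike; on $B_i$ we have $U^N=V^{(i)}$ throughout $[t_1,N^\zeta]$, and $V^{(i)}$ is independent of $\pi^i$. Applying the coupling with the process $(Z^{N-1},Y^{N-1})$ of Step 3 of the same proof and the large deviation principle of Section~\ref{sec:ldp1}, with probability at least $1-Ke^{-\theta N}$ we have $V_i^{(i)}(s)\geq \lambda_*/k$, hence $\lambda(V_i^{(i)}(s))=\lambda_*$, for every $s\in[t_1+t_\delta,N^\zeta]$; here the inequality $hx_\infty/\alpha>\lambda_*/k$, equivalent to $2a+b<1$ and thus ensured by~\eqref{condition_a_b_y0}, is crucial to guarantee that the limit of $Y^{N-1}$ exceeds $\lambda_*/k$. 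Conditioning on $V^{(i)}$ and using $\mathbb P(B_i\mid V^{(i)})=\exp(-\int_{t_1}^{N^\zeta}\lambda(V_i^{(i)}(s))\,ds)$, we obtain $\mathbb P(B_i)\leq Ke^{-\theta N}+e^{-\lambda_*(N^\zeta-t_1-t_\delta)}\leq Ce^{-c_3 N^\zeta}$. A union bound then yields $\mathbb P(\bigcup_i B_i)\leq NCe^{-c_3 N^\zeta}\leq C' e^{-c_3' N^\zeta}$.

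On the complement of $A\cup\bigcup_i B_i$, every spike on $[t_1,N^\zeta]$ is synchronous and each neuron has at least one; at the first such spike $s_i$ of neuron $i$, both $U_i^N(s_i)$ and $\tilde U_i^N(s_i)$ are reset to $0$, and afterwards they undergo identical deterministic leak and identical synchronous inputs, so $U_i^N(N^\zeta)=\tilde U_i^N(N^\zeta)$. Summing the bounds above concludes the proof. The main obstacle is the third step: translating the macroscopic control of $\jump(U^N)$ into a uniform guarantee that \emph{every} individual neuron spikes requires combining the frozen-neuron auxiliary construction with the large deviation estimates already used in the proofs of Sections~\ref{sec:ldp1} and of Proposition~\ref{prop:couple-Lambda-N}.
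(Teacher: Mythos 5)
Your proof is correct and follows essentially the same overall strategy as the paper: run Proposition~\ref{prop:couple-Lambda-N} over the first part of $[0,N^\zeta]$ to make asynchronous spikes exponentially unlikely, then use the auxiliary-process comparison and large-deviation estimates to show every neuron reaches the saturation level $\lambda_*/k$ and fires at least one (synchronous) spike, which resets both copies to $0$ and keeps them identical afterwards.

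The one place where you deviate in detail is the ``every neuron spikes'' step. The paper uses a \emph{single} global auxiliary pair $(\hat Z,\hat Y)$ driven by $\sum_i \pi^i$, started deterministically at $(x_\infty-\delta,0)$ at time $N^\zeta/2$, so that $\hat Y$ lower-bounds \emph{every} $U_i^N\wedge\tilde U_i^N$ simultaneously up to each neuron's first spike; it then combines this with the simple Poisson counting processes $K^i$ (events $B_N$ and $C_N$), with the inequality $\mathbb P(\overline{C_N})\le N e^{-\lambda_*(N^\zeta/2-t_\delta)}$ obtained unconditionally. You instead build a per-index frozen-neuron process $V^{(i)}$ and argue via conditional independence that $\mathbb P(B_i\mid V^{(i)})=\exp(-\int_{t_1}^{N^\zeta}\lambda(V_i^{(i)}))$, followed by a union bound. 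Both work, and the ingredients (comparison with $Y^{N-1}$, the LDP of Section~\ref{sec:ldp1}, and the inequality $hx_\infty/\alpha>\lambda_*/k$ coming from $2a+b<1$) are identical; the paper's single-$\hat Y$ device simply avoids the per-$i$ conditional-independence bookkeeping (you should state the independence as: $V^{(i)}$ is a measurable function of $\mathcal F_{t_1}$ and of $\pi^j|_{(t_1,N^\zeta]}$, $j\neq i$, hence independent of $\pi^i|_{(t_1,N^\zeta]}$ given $\mathcal F_{t_1}$, rather than unconditionally independent of $\pi^i$). One small omission worth making explicit: both the frozen-neuron LDP and the comparison $Z^{N-1}\le\jump(V^{(i),N-1})$ require $\jump(U^N(t_1))\ge x_\infty-\delta$, which is itself an exponentially likely event (the paper's $D_N$); you should include this event in your estimate of $\mathbb P(\overline{G_i})$.
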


\begin{proof} 
Take $\delta$ as in the proof of Proposition \ref{prop:couple-Lambda-N}. Without loss of generality, we assume that $ N^\zeta > 2 t_\delta.$ Let $(\hat  Z,\hat Y)$ be the Markov process that solves
\begin{eqnarray*}
d \hat Z(t) & =  & -\alpha \hat Z(t)dt + \po \po z_N\wedge m_N(\hat Z(t-))\pf - \hat Z(t-)\pf \int_{\R_+} \indiq_{\{z \leqslant \hat Z(t-)\}} \sum_{i=1}^N\pi^i(dt,dz)\\
d \hat Y(t) & = & -\alpha \hat Y(t) dt + \frac{h}{N}  \int_{\R_+} \indiq_{\{z \leqslant \hat Z(t-)\}} \sum_{i=1}^N\pi^i(dt,dz)\,,
\end{eqnarray*}
initialized  at time $N^\zeta/2 $ at $(x_\infty-\delta, 0)$. For all $i\in\cco 1,N\ccf$, introduce
$$ K^i_t := \int_{ [0, t]} \int_{\R_+} 1_{\{ z \le \lambda^*  \}} \pi^i ( ds, dz)  \,,$$ 
and consider the events
\begin{eqnarray*}
A_N & = & \{\text{no asynchronous spike of any of the neurons in the time interval }[N^\zeta/2 ,N^\zeta]\},\\
B_N &= & \{\hat Y (s) \geqslant \lambda_*/k \mbox{ for all } s \in [N^\zeta/2+ t_\delta, N^\zeta ]  \}\,,\\
C_N &=& \bigcap_{j=1}^N \{ K^j \mbox{ has at least one jump in  } [N^\zeta/2 + t_\delta, N^\zeta  ] \} \\
D_{N} &= & \left\{\jump\po U^N(N^\zeta/2)\pf \wedge \jump\po \tilde U^N(N^\zeta/2)\pf \geqslant x_\infty - \delta\right\}\,. 
\end{eqnarray*}
Then 
$$A_N \cap B_N \cap C_N \cap D_N \subset\{U^N(N^\zeta)=\tilde U^N(N^\zeta)\}.$$ 
Indeed, if there is no asynchronous spike during $[N^\zeta/2,N^\zeta]$ in the whole system then, as soon as a pair $(U_i^N,\tilde U_i^N)$ undergoes a synchronous spike in this time interval, they evolve synchronously afterwards and thus $U_i^N(N^\zeta)=\tilde U_i(N^\zeta)$. Moreover, under $D_N$, $\hat Z \leqslant \jump(U^N)\wedge \jump(\tilde U^N)$ after time $ N^\zeta/2, $  and thus $\hat Y \leqslant U_i^ N \wedge \tilde U_i^N$ for all $i\in\cco 1,N\ccf$ up to the first  spike of $(U_i^N,\tilde U_i^N)$ occurring after time $N^\zeta/2$. In particular, under $B_N$, it means that $\lambda(U_i^N)=\lambda(\tilde U_i^N) = \lambda_*$ for all $t\geqslant N^\zeta/2+t_\delta$, so that a jump of $K^i$ in this time interval is  a synchronous spike of $(U_i^N,\tilde U_i^N)$, which concludes.

From Proposition \ref{prop:couple-Lambda-N},
\[\mathbb P\po \overline{A_N} \pf \ \leqslant \int_{\frac{N^\zeta}{2}}^{N^\zeta} \sum_{i=1}^N \mathbb E \po \left| \lambda\po U_i^N(s) \pf -\lambda\po  \tilde U_i^N(s) \pf\right|  \pf ds \ \leqslant \ C N^\zeta\po \kappa^{N^\zeta/2}N + e^{ - \theta N^\zeta/ 2}\pf \,. \]
Using the Large Deviations Principle of Theorem \ref{theo:ldp} as in Step 3 of the proof of Proposition \ref{prop:couple-Lambda-N},
\[\mathbb P \po \overline{B_N}\pf + \mathbb P \po \overline{D_N}\pf \ \leqslant \ C e^{-\theta N}\]
for some $C,\theta>0$.
Finally, 
$$ \mathbb P ( \overline{C_N} ) \le N e^{ - \lambda_* (\frac{N^\zeta}{2} - t_\delta) }\,.$$
Summing these three inequalities concludes the proof. 
\end{proof}

\subsection{Conclusion}\label{sec:conclusion-expo}

\begin{proof}[Proof of Theorem~\ref{theo:exitTimes}]
First, since $\mathcal D\subset \{\jump\geqslant \gamma\}$ for some $\gamma>0$ in both cases, from Proposition~\ref{prop:exit1}, there exists $T>0$ such that $\xi = \sup_{u\in\R_+^N} \mathbb P_u \po \tau > T\pf < 1$. As a consequence, by the Markov property, for all $k\in\N$, $N\geqslant 1$ and $u\in\R_+^N$,
\[ \mathbb P_u \po \tau > kT\pf \ \leqslant \ \xi^k\,.\]
In particular $\tau$ is $\mathbb P_u$-almost finite and $\sup_{u\in\R_+^N}\mathbb E_u(\tau) \leqslant T/(1-\xi)$.

Second, apply Theorem \ref{thm:gene-exit} using Propositions  \ref{prop:eps1}, \ref{prop:exit1} and \ref{prop:exit2}.

More precisely, in case (1) we chose any $x_0\in\mathcal K$,  $s_1=N$, $s_2=T$, $\varepsilon_1  = \varepsilon_2= \varepsilon_4 = C e^{-\theta N}$ (and thus, by Proposition \ref{prop:eps1}, $\varepsilon_3 = 3C e^{-\theta N}$). For $N$ large enough, Lemma~\ref{lem:beta-exitgene} applies, and if $\beta$ is such that $\mathbb P_{x_0}(\tau >\beta) \in[1/4,3/4]$ then from Proposition~\ref{prop:exit1} $\beta \geqslant e^{\theta N}$. As a consequence, Assumption~\ref{hyp:exit} holds with $M=N$.

In case (2), we chose  any $x_0\in\mathcal K$,  $s_2=T$, $s_1=N^{1/4}$, $\varepsilon_2  = C/\sqrt N $, $\varepsilon_1=C/N^{1/4}$, $\varepsilon_4 = C e^{-\theta N^{1/4}}$ (and thus, by Proposition~\ref{prop:eps1}, $\varepsilon_3 = 3C/N^{1/4}$).   For $N$ large enough, Lemma~\ref{lem:beta-exitgene} applies, and if $\beta$ is such that $\mathbb P_{x_0}(\tau >\beta) \in[1/4,3/4]$ then from Proposition~\ref{prop:exit1} $\beta \geqslant N^{1/4}$. As a consequence, Assumption~\ref{hyp:exit} holds with $M=\ln N$ and $\delta = 1/4$.
\end{proof}

\section*{Acknowledgements}

This research has been conducted as part of FAPESP project Research, Innovation and Dissemination Center for Neuromathematics (grant 2013/07699-0) and of the  ANR project ChaMaNe ANR-19-CE40-0024. P. Monmarch\'e acknowledges partial funding from the French ANR grants EFI (Entropy, flows, inequalities, ANR-17-CE40-0030) and METANOLIN (Metastability for nonlinear processes, ANR-19-CE40-0009).

\end{document}